\numberwithin{equation}{section}
\newtheorem{theorem}{Theorem}[section]
\newtheorem{defn}{Definition}[theorem]
\newtheorem{corollary}[theorem]{Corollary}
\newtheorem{lemma}[theorem]{Lemma}
\newtheorem{example}[theorem]{Example}
\newtheorem{proposition}[theorem]{Proposition}
\newtheorem{conjecture}[theorem]{Conjecture}
\newtheorem*{coro}{Corollary} 
\theoremstyle{definition}
\newtheorem{definition}[theorem]{Definition}
\newtheorem{remark}[theorem]{Remark}
\font \rus= wncyr10
\newcommand{\Pro}{\mathbb{P}}
\newcommand{\ZZ}{\mathbb{Z}}
\newcommand{\QQ}{\mathbb{Q}}
\newcommand{\GG}{\mathbb{G}}
\newcommand{\CC}{\mathbb{C}}
\newcommand{\Au}{\mathcal{A}}
\newcommand{\RR}{\mathbb{R}}
\newcommand{\HH}{\mathbb{H}}
\newcommand{\Ho}{\mathcal{H}}
\newcommand{\NN}{\mathbb{N}}
\newcommand{\MT}{\mathcal{MT}}
\newcommand{\aaa}{\mathfrak{a}}
\newcommand{\tone}{\overset{\rightarrow}{1}\!}
\newcommand{\opi}{{}_0 \Pi_{1}}
\newcommand{\mm}{\mathfrak{m}}
\newcommand{\zetam}{\zeta^{\mm}}
\newcommand{\uu}{\mathfrak{u}}
\newcommand{\Or}{\mathcal{O}}
\newcommand{\dch}{\mathrm{dch}}
\newcommand{\Lef}{\mathbb{L}}
\newcommand{\per}{\mathrm{per}}
\newcommand{\Pe}{\mathcal{P}}
\newcommand{\dd}{\mathcal{D}}
\newcommand{\gr}{\mathrm{gr}}
\newcommand{\gm}{\mathfrak{g}}
\newcommand{\Isom}{\mathrm{\underline{Isom}}}
\newcommand{\sha}{\, \hbox{\rus x} \,}
\newcommand{\bq}{\backslash \!\!\backslash }
\title[ Motivic periods and  $\Pro^1\backslash \{0,1,\infty\}$ ]{Motivic periods and  $\Pro^1\backslash \{0,1,\infty\}$}
\author[Francis Brown]
{Francis Brown\thanks{Beneficiary of ERC Grant 257638}}
\begin{document}

\begin{abstract}
This is a  review of the theory of the motivic fundamental group of the projective line minus three points, 
and its relation to multiple zeta values.
\end{abstract}

\begin{classification}
Primary 11M32; Secondary 14C15.
\end{classification}

\begin{keywords}
Belyi's theorem, multiple zeta values, mixed Tate motives, modular forms.
\end{keywords}

\maketitle

\section{Introduction}
The role of the projective line minus three points $X=\Pro^1 \backslash \{0,1,\infty\}$ in relation to Galois theory can be traced back to Belyi's theorem  \cite{Belyi} (1979):
\begin{theorem}  \label{thmBelyi}
Every smooth projective algebraic curve defined over $\overline{\QQ}$ can be  realised as a ramified cover of $\Pro^1$, whose ramification  locus is contained in  $\{0,1,\infty\}$.
  \end{theorem}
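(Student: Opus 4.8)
The plan is to run Belyi's original argument: first produce \emph{some} finite map $C \to \Pro^1$, then successively post-compose it with rational self-maps of $\Pro^1$ so as to shrink its branch locus down to $\{0,1,\infty\}$ (equivalently, to make the cover unramified away from the fibres over $0,1,\infty$). For the starting map, note that since $C$ is a smooth projective curve over $\overline{\QQ}$ its function field has transcendence degree one, so any nonconstant rational function on $C$ yields a nonconstant — hence finite — morphism $f_0\colon C\to\Pro^1$ over $\overline{\QQ}$, and its branch locus $B_0\subset \Pro^1(\overline{\QQ})$ is finite. Throughout we use that for morphisms $f\colon C\to\Pro^1$ and $g\colon \Pro^1\to\Pro^1$, the branch locus of $g\circ f$ is contained in $g(B_f)\cup B_g$ (from multiplicativity of ramification indices), where $B_f,B_g$ denote branch loci.

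\emph{Reduction to rational branch points.} Let $f\colon C\to \Pro^1$ be the map built so far, let $S$ be the set of its branch points not lying in $\Pro^1(\QQ)$, and let $n(S)$ be the number of $\mathrm{Gal}(\overline\QQ/\QQ)$-conjugates of the elements of $S$ — i.e. the cardinality of the smallest Galois-stable set containing $S$. If $n(S)=0$ we pass to the next step. Otherwise let $P\in\QQ[x]$ be the product of the distinct minimal polynomials of the elements of $S$, so $\deg P=n(S)$ (the minimal polynomials are separable and pairwise coprime), and post-compose with $P\colon\Pro^1\to\Pro^1$. Each element of $S$ is a root of $P$ and so maps to $0$; the branch points of $f$ in $\Pro^1(\QQ)$ stay in $\Pro^1(\QQ)$; hence $P(B_f)\subseteq\Pro^1(\QQ)$, and the remaining branch points of $P\circ f$ lie among the critical values of $P$, the finite ones being the values of $P$ at the at most $n(S)-1$ roots of $P'$. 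Since $P\in\QQ[x]$, that set of critical values is Galois-stable, so the analogue $S'$ of $S$ for $P\circ f$ satisfies $n(S')\le n(S)-1$. Iterating, we reach a map whose branch points all lie in $\Pro^1(\QQ)$.

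\emph{Reduction to three points.} Now let $B\subset\Pro^1(\QQ)$ be the (finite, rational) branch locus. If $|B|\le 3$, a Möbius transformation over $\QQ$ carries $B$ into $\{0,1,\infty\}$ and we are done. If $|B|\ge 4$, apply a Möbius transformation over $\QQ$ sending three points of $B$ to $0,1,\infty$; a fourth point then goes to some $\lambda\in\QQ\setminus\{0,1\}$, and post-composing if necessary with one of the Möbius maps over $\QQ$ permuting $\{0,1,\infty\}$ (this replaces $\lambda$ by one of $1-\lambda$, $1/\lambda$, $1/(1-\lambda)$, \dots) we may assume $0<\lambda<1$, say $\lambda=m/(m+n)$ with $m,n$ positive integers. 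Then
\[
\phi(x)=\frac{(m+n)^{m+n}}{m^m\,n^n}\,x^m(1-x)^n
\]
satisfies $\phi(0)=\phi(1)=0$, $\phi(\infty)=\infty$, $\phi(\lambda)=1$, and since $\phi'(x)$ is a constant times $x^{m-1}(1-x)^{n-1}\bigl(m-(m+n)x\bigr)$ its critical points are $0,1,\lambda$; hence $B_\phi\subseteq\{0,1,\infty\}$. Therefore the four points $0,1,\infty,\lambda$ are all carried into $\{0,1,\infty\}$ by $\phi$, so the branch locus of $\phi\circ f$ has at most $|B|-1$ points, all of them rational, and repeating brings the count down to $3$.

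The crux is the reduction to rational branch points: the naive invariant — the number of non-rational branch points — need not decrease, because the critical values of a polynomial can again be irrational and of large degree. The essential point is to use the correct monovariant, the total number $n(S)$ of Galois conjugates of the irrational branch points, together with a polynomial $P$ over $\QQ$ whose root set is precisely that conjugate set, so that $\deg P'=n(S)-1$ forces the strict drop. By contrast the existence of $f_0$ and the final Möbius normalisations are routine, and the last step reduces to the elementary verification that the displayed $\phi$ has the stated critical points and values.
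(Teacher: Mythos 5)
Your argument is correct. The paper does not prove this statement at all: it is quoted as Belyi's theorem with a citation to \cite{Belyi}, so there is no internal proof to compare against. What you have written is essentially Belyi's original two-stage argument — first make all branch points rational by post-composing with polynomials over $\QQ$, then collapse the rational branch locus to $\{0,1,\infty\}$ with the maps $x\mapsto \lambda^{-m}(1-\lambda)^{-n}x^m(1-x)^n$ up to normalisation — and every step checks out: the containment $B_{g\circ f}\subseteq g(B_f)\cup B_g$, the fact that the critical values of $P\in\QQ[x]$ form a Galois-stable set of size at most $\deg P-1$, and the computation of the critical points of $\phi$. One small point worth noting in your favour: by composing with the product of \emph{all} minimal polynomials of the irrational branch points and measuring progress by the size $n(S)$ of the full Galois-stable set, you get a strict one-step drop and avoid the lexicographic induction (on the maximal degree of a branch point and the number of points of that degree) that appears in many textbook write-ups of this reduction; this is a legitimate and slightly cleaner monovariant. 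The only cosmetic caveat is that $\infty$ is a critical value of each polynomial you compose with, so it should be (and implicitly is, in your phrase ``the finite ones'') counted among the branch points — harmless, since it is rational.
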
 
Belyi deduced that the absolute Galois group of $\QQ$ acts faithfully on the profinite completion
of the fundamental group of $X$, i.e., the map
\begin{equation} \label{introGQaction} 
\mathrm{Gal}(\overline{\QQ} /\QQ) \rightarrow  \mathrm{Aut}(\widehat{\pi}_1(X(\CC), b))
\end{equation} 
where $b \in X(\QQ)$, is injective. In his famous proposal `\emph{Esquisse d'un programme}'  in 1984 \cite{Esquisse}, Grothendieck  suggested 
studying the absolute Galois group of $\QQ$ via its action on completions of  fundamental groups of moduli spaces of curves $\mathcal{M}_{g,n}$
of genus $g$ with $n$ ordered marked points  ($X$ being isomorphic to $\mathcal{M}_{0,4}$) and their interrelations.   A few years later, at approximately the same time,  
these ideas were developed in somewhat different directions  in three enormously influential papers due to Drinfeld, Ihara, and Deligne \cite{Drinfeld, Ihara, DeP1}.
Ihara's 1990 ICM  talk gives a  detailed  account of the subject  at that time \cite{IharaICM}.  However, the problem of determining the image of the map $(\ref{introGQaction})$   remains completely open to this day.

In this talk I will mainly consider the pro-unipotent  completion of the fundamental group of $X$, which seems to be  a more tractable object 
than its profinite version, and closely follow the point of view of Deligne, and Ihara (see \cite{IharaICM}, \S5).

\subsection{Unipotent completion} \label{sectIntroUnip}
Deligne showed \cite{DeP1}  that the pro-unipotent completion of $\pi_1(X)$  carries many extra structures corresponding to the realisations of an (at the time)  hypothetical 
category  of mixed Tate motives over the integers. Since then,  the motivic framework has now been completely established due to the work of a large number of different authors including Beilinson, Bloch, Borel, Levine, Hanamura,  and Voevodsky. The definitive reference is  \cite{DG}, \S\S1-2.

\begin{enumerate}
\item There exists an abstract  Tannakian category $\MT(\ZZ)$ of mixed Tate motives unramified  over $\ZZ$. It is a $\QQ$-linear  
subcategory of the category $\MT(\QQ)$ of mixed Tate motives over $\QQ$ obtained by restricting certain Ext groups.
It is equivalent to the category of representations of  an affine group scheme $G^{dR}$ which is defined over $\QQ$ and is a semi-direct product 
$$G^{dR} \cong U^{dR} \rtimes \GG_m\ .$$
The subgroup $U^{dR}$ is pro-unipotent, and its graded Lie algebra (for the action of  $\GG_m$) is isomorphic to the free graded Lie algebra with one generator
$$\sigma_3, \sigma_5, \sigma_7, \ldots $$
in every odd negative degree $\leq -3$. 
The essential reason for this is that the algebraic $K$-theory of the integers   $K_{2n-1}(\ZZ)$ has rank
1 for  $n =3,5,7,\ldots $, and  rank 0  otherwise, as shown by Borel \cite{Bo1,Bo2}.
Note that the elements $\sigma_{2n+1}$ are only well-defined modulo commutators.

\item  The pro-unipotent completion $\pi^{un}_1(X,\tone_0, -\tone_1)$ is the Betti realisation of an object, called the motivic fundamental groupoid (denoted by $\pi_1^{mot}$), 
whose affine ring is a limit of  objects in the category $\MT(\ZZ)$. 
\end{enumerate}

The majority  of these notes will go into explaining    2 and  some of  the ideas behind the following motivic analogue of Belyi's injectivity theorem   $(\ref{introGQaction})$:

\begin{theorem} \label{thmfaithfulactionofGdr} $G^{dR}$ acts faithfully on the de Rham realisation of  $\pi^{mot}_1(X, \tone_0, -\tone_1)$.
\end{theorem}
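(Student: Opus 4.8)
The plan is to reduce the faithfulness of the $G^{dR}$-action on $\pi_1^{dR}(X,\tone_0,-\tone_1)$ to the statement that the motivic periods of this fundamental groupoid --- the \emph{motivic multiple zeta values} $\zeta^{\mm}(n_1,\ldots,n_r)$ --- generate, as a $\QQ$-algebra, the full ring of motivic periods of $\MT(\ZZ)$. Concretely, $G^{dR}$ acts on the de Rham fundamental groupoid through a morphism of affine group schemes $G^{dR}\to \mathrm{\underline{Aut}}(\pi_1^{dR})$, and the kernel of this morphism is the largest subgroup scheme acting trivially; equivalently, the action is faithful if and only if the affine ring of $\pi_1^{dR}$, together with its $G^{dR}$-comodule structure, contains enough elements to separate points of $G^{dR}$. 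Since $\MT(\ZZ)$ is Tannakian with Galois group $G^{dR}$, this is in turn equivalent to showing that the motivic periods attached to $\pi_1^{mot}(X,\tone_0,-\tone_1)$ span the ring $\mathcal{P}^{\mm}_{\MT(\ZZ)}$ of all motivic periods of the category.

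The key steps, in order, would be: \textbf{(i)} Set up the motivic torsor of paths: the de Rham realisation $\pi_1^{dR}(X,\tone_0,-\tone_1)$ is a pro-unipotent affine group scheme over $\QQ$ whose Hopf algebra is the shuffle algebra on two generators, graded by weight; the pair $(\pi_1^{dR},\pi_1^{B})$ defines a pro-object of $\MT(\ZZ)$ whose periods are the multiple zeta values. \textbf{(ii)} Use the coaction: the motivic MZVs $\zeta^{\mm}(n_1,\ldots,n_r)$ carry an explicit coaction of the de Rham Galois group, computable via Goncharov's formula / Ihara's action, which one can write down combinatorially on the level of iterated integrals. \textbf{(iii)} Prove that these motivic MZVs span $\mathcal{P}^{\mm}_{\MT(\ZZ)}$ in each weight. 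Because the graded Lie algebra of $U^{dR}$ is free on one generator $\sigma_{2n+1}$ in each odd degree $\leq -3$, it suffices to exhibit, in each weight $N$, a set of motivic MZVs whose images under the infinitesimal coaction realise a basis dual to the appropriate bracketings of the $\sigma_{2n+1}$; the natural candidate is the family $\zeta^{\mm}(n_1,\ldots,n_r)$ with $n_i\in\{2,3\}$. \textbf{(iv)} Conclude: once the periods of $\pi_1^{mot}(X,\tone_0,-\tone_1)$ generate all of $\mathcal{P}^{\mm}_{\MT(\ZZ)}$, no nontrivial element of $G^{dR}$ can act trivially on $\pi_1^{dR}$, since such an element would have to act trivially on every motivic period, contradicting the fact that $\mathcal{P}^{\mm}$ is a faithful $G^{dR}$-torsor-equivariant module.

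The main obstacle is step (iii): proving the spanning (equivalently, a lower bound $\dim_{\QQ}\mathcal{H}_N \geq d_N$ where $d_N$ is the Padovan-type dimension predicted by freeness of the Lie algebra). The standard route is to introduce the level filtration by the number of $n_i$ equal to $3$, show the associated graded of the coaction is computed by a matrix built from $2$-$3$ patterns, and then invoke an arithmetic non-vanishing input --- concretely, a linear-algebra statement about certain explicit $\QQ$-coefficients (ultimately traceable to the non-vanishing modulo primes of binomial-type sums, or to Zagier's evaluation of $\zeta(2,\ldots,2,3,2,\ldots,2)$) --- to see that this matrix is invertible, hence the $2$-$3$ motivic MZVs are linearly independent and of the right count. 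This is the point where one genuinely uses that we are over $\ZZ$ (no even generators, one generator per odd degree) and where a purely formal argument does not suffice; everything else is Tannakian formalism plus the explicit combinatorics of iterated integrals on $\Pro^1\backslash\{0,1,\infty\}$.
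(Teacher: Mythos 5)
Your proposal follows essentially the same route as the paper: reduce faithfulness to the linear independence (hence, by the dimension count $d_N$, the spanning) of the motivic multiple zeta values $\zetam(n_1,\ldots,n_r)$ with $n_i\in\{2,3\}$, proved via the coaction dual to Ihara's formula, the filtration by the number of $3$'s, and the $2$-adic invertibility of the matrices whose entries come from Zagier's evaluation of $\zeta(2,\ldots,2,3,2,\ldots,2)$. The only slight imprecision is that these elements span $\Pe^{\mm,+}_{\MT(\ZZ),\RR}$ rather than the full ring $\Pe^{\mm}_{\MT(\ZZ)}$, which is all that is needed for the Tannakian conclusion that $U^{dR}$, and hence $G^{dR}$, acts faithfully.
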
 

This theorem has an   $\ell$-adic version which can be translated into  classical Galois theory (\cite{IharaICM}, \S5.2), and relates to some
questions  in the literature  cited above. Unlike Belyi's theorem, which is  geometric, the  proof of  theorem \ref{thmfaithfulactionofGdr} is  arithmetic and  combinatorial.
The main ideas came from the theory of multiple zeta values.

\subsection{Multiple zeta values}
Let $n_1,\ldots, n_r$ be integers $\geq 1$ such that $n_r\geq 2$. Multiple zeta values are defined by the convergent nested sums
$$\zeta(n_1,\ldots, n_r) = \sum_{1\leq k_1 < \ldots < k_r} {1 \over k_1^{n_1} \ldots k_r^{n_r}}  \quad \in \RR \  .$$
The  quantity $N=n_1+\ldots+ n_r$ is known as the weight, and $r$ the depth.
Multiple zeta values were first studied by Euler (at least in the case $r=2$)  and were rediscovered independently in mathematics by Zagier and  Ecalle, and 
 in perturbative quantum field theory by Broadhurst and Kreimer.
 They satisfy a vast array of algebraic relations which are not completely understood at the time of writing.

The relationship between these numbers and  the fundamental group comes via the theory of iterated integrals, which are implicit in the work  of Picard and were rediscovered  by Chen and Dyson.
In general, let $M$ be a differentiable manifold and let $\omega_1, \ldots, \omega_n$ be smooth 1-forms on $M$.
Consider a smooth path $\gamma: (0,1) \rightarrow M$. The iterated integral of $\omega_1,\ldots, \omega_n$ along $\gamma$
 is defined (when it converges) by
$$ \int_{\gamma} \omega_1 \ldots \omega_n = \int_{0< t_1 < \ldots < t_n < 1} \gamma^*(\omega_1)(t_1) \ldots  \gamma^*(\omega_n)(t_n)\ .$$
 Kontsevich observed that when $M=X(\CC)$ and  $\gamma(t)=t$ is simply the inclusion $(0,1) \subset X(\RR)$,  one has the following integral representation
\begin{equation} \label{MZVasitint}
\zeta(n_1,\ldots, n_r) = \int_{\gamma} \omega_1 \underbrace{\omega_0 \ldots \omega_0}_{n_1-1}  \omega_1  \underbrace{\omega_0 \ldots \omega_0}_{n_2-1}   \ldots\omega_1  \underbrace{\omega_0 \ldots \omega_0}_{n_r-1} 
\end{equation}
where
$\omega_0 = {dt \over t}$ and $\omega_1 = {dt \over 1-t}$.
I will explain in \S\ref{Periodsofpi1} that this formula allows one to interpret multiple zeta values as periods of the pro-unipotent fundamental groupoid of $X$. 
The  action of the motivic Galois group $G^{dR}$ on the (de Rham version of) the latter should translate, via Grothendieck's period conjecture, into an  
action on multiple zeta values themselves. Thus one expects multiple zeta values to be a basic example in  a Galois theory of transcendental numbers (\cite{An}, \S23.5); the action of the Galois group should preserve all their algebraic relations.

 Of course,  Grothendieck's period conjecture  is not currently known, so there is no well-defined group action on multiple zeta values. This   can be circumvented using 
 motivic multiple zeta values.  The  action of $G^{dR}$ on the de Rham fundamental group of $X$ can then be studied via its action on these objects.

\subsection{Motivic periods}
Let $T$ be a neutral Tannakian category over $\QQ$  with two fiber functors
$\omega_B , \omega_{dR} : T \rightarrow \mathrm{Vec}_{\QQ}$.
Define the ring of motivic periods to be the affine ring of functions on the scheme of  tensor isomorphisms from $\omega_{dR}$ to $\omega_B$
$$\Pe_T^{\mm} = \Or( \Isom_T(\omega_{dR}, \omega_B)) \ .  $$
Every motivic period can be constructed from  an object $M \in T$, and a pair of elements $w \in \omega_{dR}(M),  \sigma \in \omega_{B}(M)^{\vee}$. Its matrix coefficient is the function
$$\phi \mapsto   \langle \phi(w), \sigma \rangle \quad : \quad  \Isom_T(\omega_{dR}, \omega_B)  \rightarrow  \mathbb{A}^1_{\QQ}$$
where $\mathbb{A}^1_{\QQ}$ is the affine line over $\QQ$,  and defines an element denoted $[M, w, \sigma]^{\mm} \in \Pe_T^{\mm}$.   It is straightforward to write down linear relations between these symbols as well as a formula for the product of two such symbols.
 If,  furthermore,  there is an element $\mathrm{comp}_{B, dR} \in \Isom_T(\omega_{dR}, \omega_B)(\CC)$
 we can pair with it to get a map
\begin{equation}\label{permap} 
\per : \Pe_T^{\mm} \longrightarrow \CC
\end{equation}
called the period homomorphism.   The ring $\Pe_T^{\mm}$ admits a left action of the group $G^{dR} = \Isom_T(\omega_{dR}, \omega_{dR})$, or equivalently, a left coaction
\begin{equation} \label{Gdrcoact}
\Pe_T^{\mm} \longrightarrow \Or(G^{dR}) \otimes_{\QQ} \Pe_T^{\mm}\ .
\end{equation} 
\begin{example} \label{exampleLef} Let $T$ be any category of mixed Tate motives over a number field. It contains the Lefschetz
motive $\Lef=\QQ(-1)$, which is the motive  $H^1(\Pro^1\backslash \{0,\infty\})$.  Its de Rham cohomology is the $\QQ$-vector space spanned by the class $[{dz \over z}]$
 and its Betti homology is spanned  by a small positive loop  $\gamma_0$ around $0$.   The Lefschetz motivic period is 
 $$\Lef^{\mm} =[ \Lef,[ \textstyle {dz \over z}] ,  [\gamma_0]]   \quad \in \quad  \Pe^{\mm}_T\    .  $$
Its period is $\per(\Lef^{\mm}) = 2 \pi i$. It transforms, under the rational points of the de Rham Galois group of $T$,   by $\Lef^{\mm} \mapsto \lambda \Lef^{\mm}$, for any $\lambda \in \QQ^\times$. 
\end{example}

This construction can be applied to any pair of fiber functors to obtain different notions of motivic periods. Indeed,
the elements of $\Or(G^{dR})$ can be viewed as `de Rham' motivic periods, or matrix coefficients of the form $[M, w,v]^{dR}$, where $w\in \omega_{dR}(M)$
and $v\in \omega_{dR}(M)^{\vee}$ (called framings).  Whenever the fiber functors  carry extra structures (such as `complex conjugation' on $\omega_B$ or a  `weight' grading on $\omega_{dR}$), then the ring of motivic periods inherits similar structures.

\subsubsection{Motivic MZV's} Let $T= \MT(\ZZ)$.  The Betti and de Rham realisations provide
functors $\omega_B, \omega_{dR}$, and integration defines a canonical element $\mathrm{comp}_{B,dR} \in \Isom_T(\omega_{dR}, \omega_B)(\CC)$. 
Since the de Rham functor $\omega_{dR}$ is graded by the weight, the ring of motivic periods $\Pe^{\mm}_{\MT(\ZZ)}$ is also graded.\footnote{In the field of multiple zeta values, the `weight' refers to  one half of the Hodge-theoretic weight, so that $\Lef^{\mm}$ has degree $1$ instead of $2$. I shall adopt this terminology from here on.}
It contains graded subrings
$$\Pe^{\mm,+}_{\MT(\ZZ),\RR} \  \subset \  \Pe^{\mm,+}_{\MT(\ZZ)} \  \subset  \  \Pe^{\mm}_{\MT(\ZZ)}   $$ 
of  geometric periods (periods of motives whose weights are $\geq 0$),  denoted by  a superscript $+$, and those which  
 are also invariant under complex conjugation (denoted by a subscript $\RR$, since their   periods lie in $\RR$ as opposed to $\CC$).

Next, one has to show that the integral  $(\ref{MZVasitint})$ defines a period of an object $M$ in $\MT(\ZZ)$ (this can be done in several ways:  \cite{GM}, \cite{Terasoma}, \cite{DG})).  
This defines a matrix coefficient $[M, w, \sigma]^{\mm}$,  where $w$ encodes the integrand, and  $\sigma$ the domain of integration, which we call  a motivic multiple zeta value (\S \ref{sectMotMZV})  $$\zetam(n_1,\ldots, n_r) \in \Pe^{\mm}_{\MT(\ZZ)}\ . $$
Its weight is $n_1+\ldots +n_r$ and its   period is  $(\ref{MZVasitint})$.  Most  (but not all) of the known algebraic relations between multiple zeta values
are also known to hold for their motivic versions. Motivic multiple zeta values generate a  graded subalgebra
 \begin{equation} \label{Hdef} 
 \Ho \quad  \subset \quad  \Pe^{\mm,+}_{\MT(\ZZ),\RR}\  . 
 \end{equation}
   The description $\S\ref{sectIntroUnip}$, (1) of  $U^{dR}$ 
 enables
 one to compute the dimensions of the motivic periods of $\MT(\ZZ)$ in each degree by a simple counting argument:
 \begin{equation}  \label{dNdef} 
\hbox{if} \quad d_N:= \dim_{\QQ} \big( \Pe^{\mm,+}_{\MT(\ZZ),\RR}\big)_N   \quad \hbox{ then } \quad \sum_{N\geq 0} d_N  t^N= { 1 \over 1-t^2 -t^3}  \  .
 \end{equation} 
  This implies a theorem proved independently by Goncharov and Terasoma \cite{DG},\cite{Terasoma}.

\begin{theorem} The $\QQ$-vector space spanned by  multiple zeta values of weight $N$ has dimension at most 
$d_N$, where the integers $d_N$ are defined in $(\ref{dNdef})$.
 \end{theorem}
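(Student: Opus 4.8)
The plan is to deduce the inequality formally from the two substantial facts already granted above: that the iterated integral $(\ref{MZVasitint})$ is the period of an object of $\MT(\ZZ)$, so that the motivic multiple zeta value $\zetam(n_1,\ldots,n_r)$ exists and is a homogeneous element of weight $N=n_1+\cdots+n_r$ in $\Pe^{\mm,+}_{\MT(\ZZ),\RR}$; and the dimension count $(\ref{dNdef})$. The only other ingredient is that the period homomorphism $\per$ of $(\ref{permap})$ is a $\QQ$-linear map which, by the construction of $\zetam(n_1,\ldots,n_r)$ as a matrix coefficient with period $(\ref{MZVasitint})$, satisfies $\per\big(\zetam(n_1,\ldots,n_r)\big)=\zeta(n_1,\ldots,n_r)$.

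Fix $N\geq 0$. Let $Z_N\subset\RR$ be the $\QQ$-vector space spanned by the ordinary multiple zeta values of weight $N$, and let $Z^{\mm}_N\subset\Pe^{\mm}_{\MT(\ZZ)}$ be the $\QQ$-vector space spanned by the motivic multiple zeta values $\zetam(n_1,\ldots,n_r)$ with $n_1+\cdots+n_r=N$. By the above, every element of the spanning set of $Z^{\mm}_N$ lies in $\big(\Pe^{\mm,+}_{\MT(\ZZ),\RR}\big)_N$, so $Z^{\mm}_N\subseteq\big(\Pe^{\mm,+}_{\MT(\ZZ),\RR}\big)_N$ and hence $\dim_{\QQ}Z^{\mm}_N\leq d_N$ by $(\ref{dNdef})$. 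Since $\per$ sends the spanning set $\{\zetam(n_1,\ldots,n_r): n_1+\cdots+n_r=N\}$ of $Z^{\mm}_N$ onto the spanning set $\{\zeta(n_1,\ldots,n_r): n_1+\cdots+n_r=N\}$ of $Z_N$, it restricts to a surjective $\QQ$-linear map $Z^{\mm}_N\to Z_N$. A surjective linear map cannot increase dimension, so $\dim_{\QQ}Z_N\leq\dim_{\QQ}Z^{\mm}_N\leq d_N$, which is the assertion.

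Thus the proof is a one-line dimension count, and all the weight lies upstream. The reason one obtains only an inequality is that $\per$ is, a priori, not injective on $Z^{\mm}_N$; the reverse bound $d_N\leq\dim_{\QQ}Z_N$ would follow from Grothendieck's period conjecture and is not known. The step I expect to be the genuine obstacle — and which is invoked above as already done — is the construction of the object $M\in\MT(\ZZ)$ realising $(\ref{MZVasitint})$ as a period: one must verify that the relevant subquotients of the affine ring of $\pi_1^{mot}\big(X,\tone_0,-\tone_1\big)$ are mixed Tate and unramified over $\ZZ$, which uses the Betti--de Rham comparison isomorphism together with the good reduction of the tangential base points (the references \cite{GM}, \cite{Terasoma}, \cite{DG}). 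Granting this, the dimension count $(\ref{dNdef})$ is the elementary argument indicated in $\S\ref{sectIntroUnip}$: freeness of the graded Lie algebra of $U^{dR}$ on generators $\sigma_3,\sigma_5,\sigma_7,\ldots$ makes $\Or(U^{dR})$ the graded dual of a free associative algebra on generators in degrees $3,5,7,\ldots$, with Hilbert series $(1-t^2)/(1-t^2-t^3)$, and folding in the Tate twists that survive in the real geometric part — a polynomial algebra on $\zetam(2)$, of weight $2$ — multiplies this by $1/(1-t^2)$, giving $\sum_{N\geq 0} d_N t^N = 1/(1-t^2-t^3)$.
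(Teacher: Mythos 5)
Your argument is correct and is essentially the paper's own (sketched) proof: motivic multiple zeta values of weight $N$ lie in the degree-$N$ piece of $\Pe^{\mm,+}_{\MT(\ZZ),\RR}$, whose dimension $d_N$ is computed from the freeness of the Lie algebra of $U^{dR}$ exactly as you indicate, and the $\QQ$-linear period map sends them onto the multiple zeta values of weight $N$, so the span of the latter has dimension at most $d_N$. You also correctly identify that the substantive input is the motivic construction of $(\ref{MZVasitint})$ via \cite{GM}, \cite{Terasoma}, \cite{DG}, which is precisely what the paper defers to those references.
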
 

So far, this does not use the action of the motivic Galois group, only a bound on the size of the motivic periods of $\MT(\ZZ)$. The role of  $\Pro^1\backslash \{0,1,\infty\}$  is that the 
  automorphism group  of its   fundamental groupoid  yields 
  a  formula for the coaction $(\ref{Gdrcoact})$ on the motivic multiple zeta values  (\S\ref{sectDualformula}). 
 The main theorem   uses this coaction in an essential way, and is  inspired by a conjecture of M. Hoffman \cite{Hoff}.

\begin{theorem} \label{thmHoffMZVLi} The  following set of motivic MZV's are linearly independent:
\begin{equation}\label{HoffmotMZVs} 
 \{ \zetam(n_1,\ldots, n_r)  \quad \hbox{ for } \quad  n_i = \{2,3\}\}\ . 
\end{equation}
\end{theorem}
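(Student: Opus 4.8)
\medskip
\noindent\textbf{Proof proposal.} The plan is to exploit the coaction $(\ref{Gdrcoact})$ on the algebra $\Ho$ of motivic multiple zeta values through the infinitesimal operators it induces. Let $\mathcal{A}=\Ho/\zetam(2)\Ho$ be the associated ``de Rham'' Hopf algebra and $\mathcal{L}=\mathcal{A}_{>0}/(\mathcal{A}_{>0})^2$ its Lie coalgebra of indecomposables; by $\S\ref{sectIntroUnip}(1)$ it is graded with $\mathcal{L}_{2r+1}$ one--dimensional, spanned by the class $\zeta^{\mathfrak{l}}(2r+1)$ of $\zetam(2r+1)$ for each $r\geq 1$, and all other graded pieces zero. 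Composing the reduced coaction with the projection onto $\mathcal{L}_{2r+1}$ produces a \emph{derivation}
$$D_{2r+1}\colon \Ho_N \longrightarrow \mathcal{L}_{2r+1}\otimes_{\QQ}\Ho_{N-2r-1},\qquad r\geq 1,$$
and Goncharov's formula makes $D_{2r+1}$ completely explicit on the iterated integral representation $(\ref{MZVasitint})$: it cuts out all consecutive subwords of length $2r+1$ from the word in $\omega_0,\omega_1$. I would also record the standard consequence of $\S\ref{sectIntroUnip}(1)$ that the joint kernel of the $D_{2r+1}$ in weight $N$ is at most one--dimensional (spanned by $\zetam(N)$, resp.\ a power of $\zetam(2)$) --- this is the ``rigidity'' underlying Theorem \ref{thmfaithfulactionofGdr} --- although for the present statement a direct argument through the matrices $M_{N,\ell}$ below is cleaner.

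First I would set up the combinatorics. Let $\Ho^{2,3}_N\subset\Ho_N$ be the span of the elements in $(\ref{HoffmotMZVs})$ of weight $N$; there are exactly $d_N$ of them, since words in the letters $\{2,3\}$ are enumerated by $1/(1-t^2-t^3)$. Filter $\Ho^{2,3}_N$ by \emph{level}, the number of letters equal to $3$, write $\mathcal{F}_\ell$ for the span of Hoffman elements of level $\leq\ell$ and $\gr^\ell=\mathcal{F}_\ell/\mathcal{F}_{\ell-1}$. The key computation is the leading term of $D_{2r+1}$ on a Hoffman element: using that $\zetam(\{2\}^n)$ is a nonzero rational multiple of $\zetam(2)^n$ and that $\zeta^{\mathfrak{l}}(\{2\}^a,3,\{2\}^b)$ reduces to an explicit rational multiple of $\zeta^{\mathfrak{l}}(2r+1)$ in $\mathcal{L}_{2r+1}$, Goncharov's formula gives, for $w=(\{2\}^{a_0},3,\{2\}^{a_1},\ldots,3,\{2\}^{a_\ell})$,
$$D_{2r+1}\,\zetam(w)\ \equiv\ \sum_{j=1}^{\ell}c^{(r)}_j(w)\,\zeta^{\mathfrak{l}}(2r+1)\otimes\zetam(w'_j)\pmod{\mathcal{F}_{\ell-2}},$$
where $w'_j$ is the Hoffman word obtained by deleting the $j$-th letter $3$ together with $r-1$ adjacent letters $2$, and each $c^{(r)}_j(w)$ is an explicit signed combination of binomial coefficients $\binom{2r}{\bullet}$ read off from the two runs of $2$'s flanking that $3$. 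A subword of odd length $2r+1$ cannot consist only of $2$'s, so every surviving term either lowers the level by exactly $1$ (one $3$ removed, contributing to $\gr^{\ell-1}$) or by at least $2$; hence the displayed sum is genuinely the leading part. A hockey--stick count shows that for each $N,\ell$ with $N\equiv\ell\ (\mathrm{mod}\ 2)$ the pairs $(r,w'_j)$ occurring have the same cardinality $\binom{(N-\ell)/2}{\ell}$ as the level--$\ell$ Hoffman words of weight $N$, so the $c^{(r)}_j$ assemble into a \emph{square} matrix $M_{N,\ell}$.

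The argument is then an induction on the weight $N$, the inductive hypothesis being that the Hoffman elements of each weight $<N$ are linearly independent (hence, by $\dim\Ho_M\le d_M$ from $(\ref{dNdef})$, a basis of $\Ho_M$, so the level--$(\ell-1)$ ones form a basis of each $\gr^{\ell-1}\Ho^{2,3}_M$). Given a relation $\sum_w\lambda_w\zetam(w)=0$ of weight $N$, let $\ell$ be the top level occurring; apply every $D_{2r+1}$ and project into $\bigoplus_r\mathcal{L}_{2r+1}\otimes\gr^{\ell-1}\Ho^{2,3}_{N-2r-1}$. Only level--$\ell$ terms survive, and by the inductive hypothesis the target is spanned by a basis indexed by the $(r,w'_j)$, so the projected identity reads $M_{N,\ell}\cdot(\lambda_w)_{w\ \mathrm{of\ level}\ \ell}=0$; if $M_{N,\ell}$ is invertible this forces all those $\lambda_w=0$, contradicting the choice of $\ell$. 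The only boundary case is $\ell=0$ (so $N$ even, $w=\{2\}^{N/2}$), handled by $\zetam(2)\ne 0$. This proves linear independence, and since there are $d_N=\dim\big(\Pe^{\mm,+}_{\MT(\ZZ),\RR}\big)_N$ of the Hoffman elements, they in fact form a basis of $\Ho_N$, indeed of $\big(\Pe^{\mm,+}_{\MT(\ZZ),\RR}\big)_N$.

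The hard part is the invertibility of the matrices $M_{N,\ell}$: this is the genuinely arithmetic input and is not formal. The plan here is to reduce it, by a triangularity argument exploiting the block structure of the $w'_j$ when $\ell\geq 2$, to the case $\ell=1$, where $M_{N,1}$ has entries built from $\binom{2r}{2a+1}$, $\binom{2r}{2b+1}$ together with the $2$-power constants coming from the $\zetam(\{2\}^\bullet)$ and $\zeta^{\mathfrak{l}}(\{2\}^a,3,\{2\}^b)$ evaluations; one then shows that, after an integral triangular change of basis, this matrix is congruent to the identity modulo $2$, so its determinant is odd and in particular nonzero. A secondary but necessary technical point, which I would dispatch first, is the bookkeeping behind the displayed formula: one must check that subwords straddling two or more letters $3$, and the products discarded on passing to $\mathcal{L}$, contribute only to $\mathcal{F}_{\ell-2}$, so that $M_{N,\ell}$ really is the matrix of the induced map on $\gr^\ell$.
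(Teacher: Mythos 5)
Your outline reproduces the architecture of the paper's actual proof: filtering the Hoffman elements by the number of $3$'s, extracting operators $D_{2r+1}$ from the coaction $(\ref{mainformula})$, assembling square matrices $M_{N,\ell}$ for the induced maps on the associated graded, and concluding by a $2$-adic non-vanishing of determinants. The genuine gap is in where the entries of $M_{N,\ell}$ come from. You assert that $\zeta^{\uu}(\{2\}^a,3,\{2\}^b)$ ``reduces to an explicit rational multiple of $\zeta^{\uu}(2r+1)$'' whose value can be ``read off'' from binomial coefficients via Goncharov's formula. That \emph{some} rational multiple exists is formal (the odd graded pieces of the Lie coalgebra are at most one-dimensional), but its \emph{value} is exactly what the coaction cannot see: by theorem \ref{thmprimitives} the coaction determines $\zetam(\{2\}^a,3,\{2\}^b)$ only up to a rational multiple of $\zetam(2a+2b+3)$, and that undetermined multiple is precisely the one surviving modulo products, i.e.\ the number entering your matrix. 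Supplying it is the arithmetic heart of the proof: one needs Zagier's analytic evaluation of $\zeta(2,\ldots,2,3,2,\ldots,2)$ (theorem \ref{thmZagierthm}) together with the non-trivial step of lifting it to motivic multiple zeta values (via $(\ref{mainformula})$, theorem \ref{thmprimitives}, the period map and Euler's evaluation of $\zeta(2,\ldots,2)$). Your proposal never invokes this input, yet your own $2$-adic plan depends on it: the decisive factor $2^{-2r}$ sits inside $B^r_{a,b}=\bigl(1-2^{-2r}\bigr)\binom{2r}{2b+1}$, which only becomes available once the Zagier coefficients are known. Without it the entries of $M_{N,\ell}$ are unknown rationals and no invertibility argument can start.

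Two secondary points. In the paper (and in \cite{BMTZ}) the invertibility of $M_{N,\ell}$ is proved for all levels at once by showing that the $B^r_{a,b}$ contributions dominate $2$-adically; your proposed reduction to the case $\ell=1$ by a triangularity argument is left unargued and is not how the proof goes, and since the entries are not $2$-adic integers (because of $2^{-2r}$) the claim ``congruent to the identity modulo $2$, hence odd determinant'' only parses after a rescaling you would have to specify. Likewise the bookkeeping you postpone — subwords of the $\{0,1\}$-word not aligned with the letters $2,3$, boundary terms, and the vanishing $\zeta^{\uu}(2)=0$ — is exactly where additional evaluations of unipotent iterated integrals are required in \cite{BMTZ}, so it is not merely cosmetic. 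With the motivic version of theorem \ref{thmZagierthm} added as an explicit ingredient, your induction scheme does coincide with the paper's argument.
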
 
From the enumeration $(\ref{dNdef})$  of the dimensions, we deduce   that $\Ho=  \Pe^{\mm,+}_{\MT(\ZZ),\RR}$, and that    $(\ref{HoffmotMZVs})$
is a basis for 
$\Ho$. From this,  one immediately sees that $U^{dR}$ acts faithfully on $\Ho$, and 
theorem $\ref{thmfaithfulactionofGdr}$ follows easily.
As a bonus we obtain that $U^{dR}$ has canonical generators $\sigma_{2n+1}$ (defined in \S\ref{sectCanGen}), and, furthermore, by applying the period map we obtain the 

\begin{corollary} Every multiple zeta value of weight $N$ is a $\QQ$-linear combination of $\zeta(n_1,\ldots, n_r)$, where $n_i \in \{2,3\}$ and $n_1+\ldots+n_r=N$.
\end{corollary}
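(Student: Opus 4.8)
The plan is to deduce the corollary formally from Theorem~\ref{thmHoffMZVLi} and the dimension count $(\ref{dNdef})$, by transporting a relation between \emph{motivic} multiple zeta values along the period homomorphism $(\ref{permap})$. The key point is that no version of Grothendieck's period conjecture is required: we never use injectivity of $\per$, only that it is a $\QQ$-algebra homomorphism, compatible with the weight grading, with $\per\big(\zetam(n_1,\ldots,n_r)\big) = \zeta(n_1,\ldots,n_r)$.

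First I would count the Hoffman words: the number of sequences $(n_1,\ldots,n_r)$ with each $n_i \in \{2,3\}$ and $n_1+\cdots+n_r = N$ is the coefficient of $t^N$ in $\sum_{r\geq 0}(t^2+t^3)^r = (1-t^2-t^3)^{-1}$, which by $(\ref{dNdef})$ is exactly $d_N$. Each $\zetam(n_1,\ldots,n_r)$ appearing in $(\ref{HoffmotMZVs})$ is, by construction, a homogeneous element of the algebra $\Ho$ of $(\ref{Hdef})$ of degree $N = n_1+\cdots+n_r$, hence lies in the graded piece $\Ho_N \subseteq \big(\Pe^{\mm,+}_{\MT(\ZZ),\RR}\big)_N$, a $\QQ$-vector space of dimension $d_N$ by $(\ref{dNdef})$. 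By Theorem~\ref{thmHoffMZVLi} these $d_N$ elements are linearly independent, so they form a $\QQ$-basis of $\big(\Pe^{\mm,+}_{\MT(\ZZ),\RR}\big)_N$; in particular $\Ho = \Pe^{\mm,+}_{\MT(\ZZ),\RR}$.

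Now let $\zeta(n_1,\ldots,n_r)$ be an arbitrary multiple zeta value of weight $N$ and let $\zetam(n_1,\ldots,n_r) \in \Ho_N$ be its motivic lift. By the previous step there exist rational numbers $\lambda_{(m_1,\ldots,m_s)}$, indexed by the Hoffman words of weight $N$, such that $\zetam(n_1,\ldots,n_r) = \sum \lambda_{(m_1,\ldots,m_s)}\,\zetam(m_1,\ldots,m_s)$ in $\Ho_N$. Applying the $\QQ$-linear map $\per$ gives $\zeta(n_1,\ldots,n_r) = \sum \lambda_{(m_1,\ldots,m_s)}\,\zeta(m_1,\ldots,m_s)$, which is the asserted identity.

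The only genuine obstacle is Theorem~\ref{thmHoffMZVLi} itself, whose proof rests on the coaction of $G^{dR}$ on motivic multiple zeta values ($\S\ref{sectDualformula}$) and is the arithmetic core of the paper; granting it, the corollary is immediate. Within the deduction above, the one thing to check carefully is that a \emph{single} motivic multiple zeta value of weight $N$ really lies in the homogeneous component $\Ho_N$ — this holds because the weight grading on $\Pe^{\mm}_{\MT(\ZZ)}$ is induced by the $\GG_m$-action and assigns $\zetam(n_1,\ldots,n_r)$ the degree $n_1+\cdots+n_r$ — together with the fact that $\per$ is multiplicative and respects this grading, both of which are built into the construction of motivic multiple zeta values.
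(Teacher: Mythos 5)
Your proposal is correct and follows essentially the same route as the paper: the paper likewise deduces the corollary from Theorem~\ref{thmHoffMZVLi} together with the dimension count $(\ref{dNdef})$, concluding that $(\ref{HoffmotMZVs})$ is a basis of $\Ho = \Pe^{\mm,+}_{\MT(\ZZ),\RR}$ and then simply applying the period map, exactly as you do. Your extra care about the weight grading and the count of Hoffman words being $d_N$ is a faithful unpacking of what the paper leaves implicit.
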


The point of  motivic periods is that they  give a mechanism for obtaining information on the action of $G^{dR}$, via the period map, from  arithmetic
relations between real numbers. For theorem $\ref{thmHoffMZVLi}$, the required arithmetic information  comes from a formula
for $\zeta(2,\ldots, 2, 3,2, \ldots, 2)$ proved by Zagier  \cite{Zagier} using analytic techniques.

\subsection{Transcendence of motivic periods} \label{sectTransperiods}
With hindsight, theorem   \ref{thmHoffMZVLi} has less to do with mixed Tate motives, or indeed $\Pro^1\backslash \{0,1,\infty\}$, than one might think.
Define a category $H$ whose objects are given by the following data:
\begin{enumerate}
\item A finite-dimensional $\QQ$-vector space $M_B$ equipped with an  increasing filtration called the weight, which is  denoted by $W$.
\item  A finite-dimensional $\QQ$-vector space $M_{dR}$ equipped with an increasing filtration $W$ and a  decreasing filtration $F$ (the Hodge filtration).
\item  An  isomorphism
$ \mathrm{comp}_{B,dR} : M_{dR} \otimes \CC \overset{\sim}{\rightarrow} M_{B}\otimes \CC$
which respects the weight filtrations.  The vector space $M_B$, equipped with $W$ and the  filtration $F$ on $M_B\otimes \CC$ induced by 
$ \mathrm{comp}_{B,dR} $ is a $\QQ$-mixed Hodge structure.
 \end{enumerate}
The category $H$ is Tannakian (\cite{DeP1}, 1.10), with two fiber functors, so it has a ring of motivic periods  $\Pe_{H}^{\mm}$. Furthermore, the Betti and de Rham realisations define a  functor $M  \mapsto     (M_{B}, M_{dR}, \mathrm{comp}_{B,dR}) :\MT(\ZZ) \rightarrow H$, and hence 
a homomorphism
\begin{align} \label{PeMTZtoH} 
  \Pe_{\MT(\ZZ)}^{\mm}   &  \longrightarrow  \Pe_{H}^{\mm}\ . 
\end{align} 
This map is known to be injective, but we do not need this fact.
The main theorem  \ref{thmHoffMZVLi}  is equivalent to saying that the images   $\zeta^H(n_1,\ldots, n_r) \in  \Pe_{H}^{\mm}$ of  $(\ref{HoffmotMZVs})$  for $n_i\in \{2,3\}$     are linearly independent. 
 In this way, we could have dispensed with motives altogether and worked with objects in $\Pe_{H}^{\mm}$, which are elementary.\footnote{In fact, we should never need to compute relations explicitly  using `standard operations' such as those described in \cite{KoZa}; these are taken
 care of automatically by the Tannakian formalism, and the bound on the Ext groups of $\MT(\ZZ)$ coming from Borel's theorems on  algebraic $K$-theory.} 
  This leads to the following philosophy for a theory of transcendence of motivic periods in $H$ (or another  suitable category of mixed Hodge structures). It differs from standard approaches which emphasise finding  relations between periods \cite{KoZa}.
\begin{itemize}
\item Write down arithmetically interesting elements in, say  $\Pe_{H}^{\mm}$, which come from geometry (i.e., which are periods in the sense of \cite{KoZa}).
\item Compute the coaction $(\ref{Gdrcoact})$ on these motivic periods, and use it to prove  algebraic independence theorems.
\end{itemize}
Indeed, there is no reason to restrict oneself  to mixed Tate objects, as the category $H$ does not rely on any conjectural properties of mixed motives.
The role of  $\Pro^1\backslash \{0,1,\infty\}$ was  to give an integral representation for the  numbers $(\ref{MZVasitint})$ and provide a formula for the coaction.

\subsubsection{Multiple modular values} Therefore, in the final part of this talk I want to propose changing the underlying geometry altogether, and replace 
a punctured projective line with (an orbifold) $M=\Gamma \bq \HH$, where $\HH$ is the upper half plane, and 
 $\Gamma \leq \mathrm{SL}_2(\ZZ)$ is a subgroup of finite index.  Because of Belyi's theorem \ref{thmBelyi},  every smooth  connected  projective algebraic curve over a number field is isomorphic to an $\overline{\Gamma \backslash \HH}$.
 Therefore the  (pure) motivic periods  obtained in this way   are   extremely rich\footnote{Grothendieck refers to $\mathrm{SL}_2(\ZZ)$ as   `\emph{une machine \`a motifs'}}.  It is reasonable to hope that the action of the Tannaka group on the  \emph{mixed} motivic periods of $M$ should be correspondingly rich and should generate  a  large class of new periods suitable for applications in arithmetic and  theoretical physics.
 Many of these  periods can be obtained as regularised iterated integrals on $M=\Gamma \backslash \HH$ (building on those considered by Manin in \cite{Ma1,Ma2}), and the philosophy of \S\ref{sectTransperiods} concerning their Galois action can be carried out  by computing a suitable automorphism group of non-abelian group cocyles. There still remains a considerable amount of work to put this general programme in its proper motivic context and extract all the arithmetic consequences.

\subsection{Contents}  In \S\ref{sect2}, I review the motivic fundamental group of $X$ from its Betti and de Rham view points, define motivic multiple zeta values, and derive their Galois action   from first principles.
The only novelty  is a direct derivation of the infinitesimal coaction from Ihara's formula.
 In \S\ref{sectMain}, I state some consequences of theorem \ref{thmHoffMZVLi}.
 In \S\ref{sectRoots} I  explain some results of Deligne concerning the motivic fundamental group of the projective line minus $N^\mathrm{th}$ roots of unity, 
 and in \S\ref{sectDepth} discuss the depth filtration on motivic multiple zeta values and its conjectural connection with modular forms.
 In  \S\ref{sectMMV} I mention some new results on multiple modular values for $\mathrm{SL}_2(\ZZ)$, which forms a bridge between multiple zeta values and modular forms.
 
 For reasons of space, it was unfortunately not possible to review the large recent  body of work relating to associators, double shuffle equations 
 (\cite{An} \S25,   \cite{Fu}, \cite{Racinet})  and applications to knot theory, the Kashiwara-Vergne problem, and related topics such as deformation quantization; let alone 
 the vast range of  applications of multiple zeta values to high-energy physics and string theory.
 Furthermore, there has been recent progress in $p$-adic aspects of multiple zeta values, notably by H. Furusho and G. Yamashita, and work of M. Kim on integral points and the unit equation, which is
 also beyond the scope of these notes.

Many  technical aspects  of mixed Tate motives have also been omitted. See \cite{DG}, \S1-2 for the definitive reference.

\section{The motivic fundamental group of $\Pro^1 \backslash \{0,1,\infty\}$}\label{sect2}
Let $X= \Pro^1 \backslash \{0,1,\infty\}$, and for now let $x,y  \in X(\CC)$.
The motivic fundamental groupoid of $X$ (or rather, its Hodge realisation) consists of the following data:
\begin{enumerate}
\item (Betti).  A collection of schemes $\pi_1^B(X,x,y)$ which are  defined over $\QQ$, and which are equipped with the structure of a groupoid:
$$ \pi_1^B(X,x,y) \times \pi_1^B(X,y,z) \longrightarrow \pi_1^B(X,x,z)$$
for any $x,y,z \in X(\CC)$. There is a natural homomorphism
\begin{equation}  \label{pitop2piB}
 \pi^{top}_1(X,x,y) \longrightarrow \pi_1^B(X,x,y)(\QQ) 
 \end{equation}
where the fundamental  groupoid on the left is given by homotopy classes of paths relative to their endpoints. The previous map is Zariski dense.
\item (de Rham).  An affine group scheme\footnote{ It  shall also be  written
$\pi_1^{dR}(X,x,y)$ but  does not depend on the choice of base points. The fact that there is a canonical isomorphism $\pi_1^{dR}(X,x,y) = \pi_1^{dR}(X)$  is 
equivalent to saying that there is a `canonical de Rham path' between the  points $x$ and $y$.}   over $\QQ$  denoted by $\pi_1^{dR}(X)$. 

\item (Comparison). A canonical isomorphism of schemes over $\CC$
\begin{equation}\label{comparisonIsom3}
\mathrm{comp}_{B,dR}:  \pi_1^{B}(X,x,y)\times_{\QQ} \CC \overset{\sim}{\longrightarrow} \pi_1^{dR}(X)\times_{\QQ} \CC\ .
\end{equation}
\end{enumerate}
These structures are described below.
Deligne has explained (\cite{DeP1}, \S15) how to replace ordinary base points with tangential base points in various settings. Denote such a tangent vector  by 
$${\overset{\rightarrow}{v}\!}_x = \hbox{the tangent vector } v \in T_{x}(\Pro^1(\CC))  \hbox{ at the point } x\ .$$
Identifying $T_{x}(\Pro^1(\CC))$ with $\CC$, one obtains  natural tangent vectors $\tone_0$ and $-\tone_1$ at the points $0$ and $1$ respectively, and a canonical path, or `droit chemin' 
$$\dch  \in   \pi^{top}_1(X,\tone_0,-\tone_1)$$
given by the straight line which travels  from $0$ to $1$  in $\RR$  with unit speed. 

The reason for taking the above tangential base points is to ensure that the corresponding motive (theorem \ref{thmpi1ismotivic}) has good reduction modulo all primes $p$: in the setting of $\Pro^1\backslash \{0,1,\infty\}$ there are no ordinary base points with this property.

The following theorem states that the structures $1-3$ are motivic.
\begin{theorem} \label{thmpi1ismotivic} There is an ind-object  (direct limit of objects)
\begin{equation}\label{pi1motivic} 
\Or( \pi_1^{mot} ( X, \tone_0, -\tone_1 )) \in \mathrm{Ind}\,  (\MT(\ZZ)) 
\end{equation}
whose Betti and de Rham realisations are the affine rings $\Or(\pi_1^B(X,\tone_0, -\tone_1))$, and $\Or(\pi_1^{dR}(X))$, respectively. \end{theorem}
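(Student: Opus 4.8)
The plan is to reduce the statement to two facts: that the cohomology of $X$ and of its powers, relative to suitable boundary divisors, is mixed Tate; and — the genuinely delicate point — that the object so obtained has good reduction at every prime, so that it lands in $\MT(\ZZ)$ and not merely in $\MT(\QQ)$.

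First I would recall the cohomological presentation of the prounipotent fundamental torsor. The groupoid $\pi_1^{un}(X,x,y)$ is the inverse limit of the unipotent algebraic groups $\pi_1^{un}/\Gamma^{N+1}$ obtained by quotienting by the lower central series, so its affine ring is the filtered union of the finite-dimensional subspaces $\Or(\pi_1^{un}/\Gamma^{N+1})$; it therefore suffices to realise each of these, compatibly in $N$ and with the groupoid structure, as the Betti (resp.\ de Rham) realisation of an object of $\MT(\ZZ)$. By the bar-construction description of unipotent homotopy (Chen, Beilinson, Wojtkowiak, and in the motivic setting Deligne--Goncharov \cite{DG}), these truncated affine rings are computed from an explicit cosimplicial scheme built from the powers $X^N$: one cuts $\Or(\pi_1^{un}(X,x,y))_{\leq N}$ out of the relative cohomology of $X^N$ with respect to the union of the partial diagonals $\{x_i=x_{i+1}\}$ together with the faces attached to the base points $x$ and $y$, the tangential base points $x=\tone_0$, $y=-\tone_1$ being handled by a residue/limit construction that stays within the same class of cohomology objects (equivalently one works with the compactifications $\overline{\mathcal M}_{0,n}$, as in Goncharov--Manin \cite{GM}).

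Next I would check that all of these cohomology objects are mixed Tate. Here $X=\Pro^1\setminus\{0,1,\infty\}$ is the complement of three $\QQ$-rational points, and $X^N$ together with its partial diagonals and base-point faces is a ``linear'', cellular configuration inside $(\Pro^1)^N$, stratified by products of affine lines and punctured affine lines; hence the relative cohomology groups in play are iterated extensions of Tate twists $\QQ(-r)$. Granting that mixed Tate motives over $\QQ$ form a Tannakian category with the expected Betti and de Rham realisations (\cite{DG}, \S1--2, building on work of Levine, Voevodsky and others), each $\Or(\pi_1^{un}/\Gamma^{N+1})$ is thus canonically the realisation of an object of $\MT(\QQ)$; passing to the colimit over $N$ produces an ind-object of $\MT(\QQ)$ with the correct realisations, and the groupoid multiplication, being induced by morphisms of the underlying cosimplicial schemes, is automatically a morphism of ind-objects.

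The main obstacle is the final step: upgrading this ind-object from $\MT(\QQ)$ to $\MT(\ZZ)$, i.e.\ showing it is unramified at every prime $p$ — and this is exactly where the tangential base points $\tone_0$ and $-\tone_1$ are indispensable. The scheme $X$ spreads out to $\Pro^1_{\ZZ}\setminus\{0,1,\infty\}$, and since $0$, $1$, $\infty$ remain pairwise distinct in every residue characteristic this is smooth over $\mathrm{Spec}\,\ZZ$ with geometrically connected fibres; the tangent vectors $\tone_0$ and $-\tone_1$ are defined integrally and do not degenerate modulo any $p$, so the boundary divisors entering the cosimplicial model also spread out, with normal crossings, over $\mathrm{Spec}\,\ZZ$. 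Hence the relative cohomology objects computing $\Or(\pi_1^{un})$ have good reduction everywhere, and by the ramification criterion that carves out $\MT(\ZZ)$ inside $\MT(\QQ)$ — equivalently by the vanishing of the $\mathrm{Ext}$ groups dictated by Borel's computation of $K_\bullet(\ZZ)$, cf.\ \S\ref{sectIntroUnip} and \cite{Bo1, Bo2} — they lie in $\MT(\ZZ)$. By contrast no ordinary base point $b\in X(\QQ)$ has this property: writing $b=a/c$ in lowest terms, $b$ collides with $0$, $1$ or $\infty$ modulo any prime dividing $a$, $a-c$ or $c$, which is precisely why the theorem is stated with tangential base points. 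The remaining compatibilities — that the Betti realisation of the cosimplicial scheme recovers $\Or(\pi_1^B(X,\tone_0,-\tone_1))$ via Chen's iterated-integral theorem, and that its de Rham realisation recovers $\Or(\pi_1^{dR}(X))$ — are built into the construction; alternatively one may invoke the independent routes of Terasoma \cite{Terasoma} or Goncharov--Manin \cite{GM}, all of which reduce to the same good-reduction input.
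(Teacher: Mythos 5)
Your proposal takes essentially the same route as the paper's own sketch: the Beilinson--Wojtkowiak description of the truncated path torsor via the relative (co)homology of $X^n$ with respect to the diagonals and the base-point faces, mixed Tate-ness from the linear stratification of this configuration, and the tangential base points together with unramifiedness over $\ZZ$ isolated as the delicate final step, which the paper itself simply defers to \cite{DG}, \S 3. Your extra paragraph spreading the configuration out over $\mathrm{Spec}\,\ZZ$ to justify good reduction is consistent with how that reference completes the argument, so I see no gap at the level of rigour of the paper's sketch.
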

\begin{proof} (Sketch)
The  essential idea is due to Beilinson  (\cite{GoMTM}, theorem 4.1) and Wojtkowiak \cite{Wo}.  Suppose, for simplicity, that $M$ is a connected manifold and $x,y \in M$ are distinct points. Consider the submanifolds in $M\times \ldots \times M$ ($n$ factors):
$$ N_i = M^{i-1} \times \Delta \times M^{n-i-1}  \qquad \hbox{ for } i=1, \ldots, n-1$$
where $\Delta$ is the diagonal $M \subset M\times M$. Set $N_0 = \{x\} \times M^{n-1}$ and $N_{n} =  M^{n-1} \times \{y\} $, and  let $N\subset M^n$ be the union of the $N_i$, for $i=0,\ldots, n$. Then 
\begin{equation} \label{HkMnN} 
 H_k (M^{n}, N) = \begin{cases} \QQ[ \pi_1^{top}(M,x,y)] / I^{n+1} \quad   \hbox{ if } k = n  \\  0 \qquad  \qquad \qquad \qquad \qquad \hbox{ if } k< n \end{cases}
 \end{equation} 
where the first line is the $n^\mathrm{th}$ unipotent truncation of the fundamental torsor of paths from $x$ to $y$ ($I$ is the image of the augmentation 
ideal in $\QQ[ \pi_1^{top}(M,x)] $;  see below). In the case when $M=\Pro^1 \backslash \{0,1,\infty\}$, the left-hand side  of $(\ref{HkMnN})$ defines a mixed Tate motive.  The case when $x=y$, or when $x$ or $y$ are tangential base points, is more delicate \cite{DG}, \S3.
\end{proof}

The Betti and de Rham realisations can be described  concretely as follows.
\begin{enumerate}
\item (Betti). The Betti fundamental groupoid is defined to be the pro-unipotent completion of the ordinary topological fundamental groupoid.
 For simplicity, take $x=y \in X(\CC)$.  Then there is an exact sequence
 $$0 \longrightarrow I \longrightarrow \QQ[\pi_1^{top}(X(\CC), x) ] \longrightarrow \QQ \longrightarrow 0 $$
 where the third map sends the homotopy class of any path $\gamma$ to $1$ (thus $I$ is the augmentation ideal). Then one has (Mal\v{c}ev, Quillen)
 $$ \Or( \pi_1^B(X,x) ) = \lim_{N \rightarrow \infty} \Big( \QQ[ \pi_1^{top}(X,x)] / I^{N+1}\Big)^{\vee} $$
 The case when $x\neq y$ is defined in a  similar way, since $\QQ[\pi_1^{top}(X(\CC), x,y) ]$ is a rank one module over  $\QQ[\pi_1^{top}(X(\CC), x) ] $.
\item (de Rham). When $X = \Pro^1\backslash \{0,1,\infty\}$, one verifies that 
$$\Or(\pi_1^{dR}(X)) \cong \bigoplus_{n \geq 0} H^1_{dR}(X)^{\otimes n} $$
which is isomorphic to the tensor coalgebra on the two-dimensional graded $\QQ$-vector space $H^1_{dR}(X)\cong \QQ(-1) \oplus \QQ(-1)$. We can take as  basis  the elements
$$[\omega_{i_1} | \ldots | \omega_{i_n}]  \quad \hbox{ where } \omega_{i_k} \in  \textstyle{\{ {dt \over t}, {dt \over 1-t}\}}$$
where the bar notation denotes a tensor product $\omega_{i_1}\otimes \ldots \otimes \omega_{i_n}$. It is a Hopf algebra for  the shuffle product and deconcatenation coproduct
and is graded in degrees $\geq 0$ by the degree which assigns  ${dt \over t}$ and ${dt \over 1-t}$  degree $1$.
\end{enumerate}
\vspace{-0.05in}
Denoting  $\tone_0$ and $-\tone_1$ by $0$ and $1$ respectively, 
one can  write, for $x, y \in \{0,1\}$
$${}_x\Pi^{\bullet}_y = \mathrm{Spec}( \Or(\pi_1^{\bullet}(X,x,y))   \qquad \hbox{ where } \quad \bullet \in  \{B, dR, \mathrm{mot}\}\ .$$
It is convenient to write ${}_x\Pi_y$ instead of  ${}_x\Pi^{dR}_y$.
It does not depend on $x$ or $y$, 
but admits an action of the motivic Galois group $G^{dR}$ which 
is  sensitive to $x$ and $y$.  If $R$ is any commutative unitary $\QQ$-algebra,
$${}_x\Pi_y (R)  \cong \{ S \in R\langle\langle x_0, x_1 \rangle \rangle^{\times}  : \Delta S= S \otimes S \}$$
is isomorphic to the group of invertible formal power series in two non-commuting variables $x_0, x_1$, which are group-like for the completed
coproduct $\Delta$ defined by  $\Delta(x_i) = x_i \otimes 1 + 1 \otimes x_i$.  
The group law is given by concatenation of series.

\subsection{Periods}  \label{Periodsofpi1} The periods of the motivic fundamental groupoid of $\Pro^1\backslash \{0,1,\infty\}$ are the coefficients of the 
comparison isomorphism $\mathrm{comp}_{B,dR}$ $(\ref{comparisonIsom3})$ with respect to the $\QQ$-structures on the Betti and de Rham sides.
 Let 
$${}_01_1^B \quad   \in \quad  \pi_1^B(X,\tone_0, -\tone_1) (\QQ)   \quad \subset  \quad \Or( \pi_1^B(X,{\small \tone_0 , -\tone_1}) ) ^{\vee}$$
denote the image of $\dch$ under the natural map $(\ref{pitop2piB})$. It should be viewed
as a linear form on  the affine ring  of the Betti $\pi_1$.
For all $\omega_{i_k} \in \{ {dt \over t}, {dt \over 1-t} \}$, 
\begin{equation} \label{genitint}
\langle \mathrm{comp}_{B,dR}([\omega_{i_1}| \ldots | \omega_{i_n}]),  {}_01_1^B \rangle = \int_{\dch} \omega_{i_1} \ldots   \omega_{i_n} \end{equation}
The right-hand side is the  iterated integral  from $0$ to $1$,  \emph{regularised} with respect to the tangent vectors $1$ and $-1$ respectively,
of the one-forms $\omega_{i_k}$.  No regularisation is necessary in the case when $\omega_{i_1} = {dt \over 1-t}$ and $\omega_{i_n} = {dt \over t}$,
and in this case the right-hand side reduces to the formula $(\ref{MZVasitint})$.  In general, one can easily show: 
\begin{lemma} The  integrals $(\ref{genitint})$ are $\ZZ$-linear combinations of MZV's of weight $n$.
\end{lemma}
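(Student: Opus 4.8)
The plan is to make the regularisation explicit and reduce everything to the convergent case via the shuffle product. First I would recall that the iterated integral $\int_{\dch}\omega_{i_1}\cdots\omega_{i_n}$ with tangential base points $\tone_0$ and $-\tone_1$ is defined by the standard Deligne regularisation: one picks $0<\varepsilon<1/2$, integrates over the truncated simplex $\varepsilon<t_1<\cdots<t_n<1-\varepsilon$, expands the result as a polynomial in $\log\varepsilon$ (equivalently in $\log\varepsilon$ and $\log(1-\varepsilon)$, but symmetry in the two endpoints lets one use a single variable), and takes the constant term as $\varepsilon\to 0$. The shuffle-regularisation formalism (Ihara--Kaneko--Zagier) then gives a closed formula: writing words in the alphabet $\{\omega_0,\omega_1\}$ with $\omega_0=\tfrac{dt}{t},\ \omega_1=\tfrac{dt}{1-t}$, the regularised integral of any word $w$ is a $\ZZ$-linear (indeed polynomial-with-$\ZZ$-coefficients in the already-regularised $\zeta$-values) combination obtained from the shuffle relations, using the two normalisations $\int_{\dch}\omega_0 = 0$ and $\int_{\dch}\omega_1 = 0$ for the `boundary' letters.

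The key steps, in order, are: (i) observe that a word $w=\omega_{i_1}\cdots\omega_{i_n}$ with $\omega_{i_1}=\omega_1$ and $\omega_{i_n}=\omega_0$ is \emph{convergent}, and its integral is exactly $(\ref{MZVasitint})$, hence an MZV of weight $n$ with coefficient $1\in\ZZ$; (ii) for a general word, use the shuffle product to write $w$ as a $\ZZ$-linear combination of shuffles $\omega_0^{\sha a}\sha w'\sha \omega_1^{\sha b}$ where $w'$ is convergent (this is the standard `shuffle regularisation = uniqueness of the algebra map sending $\omega_0,\omega_1\mapsto 0$' argument: the polynomial algebra $\QQ\langle\omega_0,\omega_1\rangle_{\sha}$ is freely generated over the convergent words by $\omega_0$ and $\omega_1$); (iii) note that $\int_{\dch}$ is a homomorphism for the shuffle product (this is the classical product formula for iterated integrals along a fixed path, valid for the regularised integrals by a limiting argument), and $\int_{\dch}\omega_0=\int_{\dch}\omega_1=0$ by the choice of tangential base points, so each term $\int_{\dch}(\omega_0^{\sha a}\sha w'\sha \omega_1^{\sha b})$ vanishes unless $a=b=0$; (iv) conclude that $\int_{\dch}w$ equals the $\ZZ$-linear combination of the convergent pieces $\int_{\dch}w'$, each of which is an MZV of weight $n$ by step (i). Since weight is preserved throughout (the shuffle product is graded by word length), the result is a $\ZZ$-linear combination of weight-$n$ MZV's.

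The main obstacle is step (ii)--(iii): one must be careful that the shuffle-regularisation procedure produces \emph{integer} coefficients, not merely rational ones. This is true because the decomposition of an arbitrary word into shuffles of boundary letters with convergent words has integer coefficients — it amounts to inverting the unitriangular change of basis given by $w\mapsto \sum \binom{\cdots}{\cdots}(\text{shuffles})$, and the binomial coefficients appearing are integers, with the inverse matrix again integral by unitriangularity. The other slightly delicate point is justifying that $\int_{\dch}$, \emph{as regularised}, remains a shuffle homomorphism and still kills $\omega_0$ and $\omega_1$: this follows from Deligne's construction (\cite{DeP1}, \S15) of the regularised iterated integral as a genuine period of $\pi_1^B(X,\tone_0,-\tone_1)$, since the group-like element ${}_01_1^B$ pairs with the shuffle Hopf algebra structure on $\Or(\pi_1^{dR}(X))$ multiplicatively, and the letters $\omega_0,\omega_1$ are the regularisation normalisations built into the tangential base points. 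Granting these two facts, the lemma is immediate.
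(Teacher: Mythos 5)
The paper offers no proof of this lemma (it is asserted as something "one can easily show"), so there is no official argument to compare against; what you propose is the standard shuffle-regularisation proof, and its skeleton is sound: convergent words (those beginning with $\omega_1$ and ending with $\omega_0$) give single MZV's of weight $n$; the regularised $\int_{\dch}$ is a homomorphism for the shuffle product which annihilates the single letters $\omega_0$ and $\omega_1$ because the tangential base points $\tone_0$ and $-\tone_1$ have unit length; and one then reduces an arbitrary word to convergent ones. Weight-homogeneity is preserved throughout, as you say.

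The one genuine flaw is the integrality claim in your step (ii): it is \emph{not} true that every word is a $\ZZ$-linear combination of shuffles $\omega_0^{\sha a}\sha w'\sha \omega_1^{\sha b}$ with $w'$ convergent. Already in weight two, $\omega_0\omega_0 = {1\over 2}\,\omega_0\sha\omega_0$, and no integral decomposition exists, since the only admissible product whose expansion contains the word $\omega_0\omega_0$ is $\omega_0\sha\omega_0 = 2\,\omega_0\omega_0$. What you actually need is weaker, and true: only the $a=b=0$ component, i.e.\ the image of $w$ under the regularisation map sending $\omega_0,\omega_1\mapsto 0$, has to be an integral combination of convergent words. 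This follows from the unitriangular induction you allude to, provided it is run on the right filtration: write $w=\omega_0^{a}v$ with $v$ not beginning in $\omega_0$; in the expansion of $\omega_0^{a}\sha v$ the word $\omega_0^{a}v$ occurs with coefficient exactly $1$, and every other word occurs with a non-negative integer coefficient and a strictly shorter initial $\omega_0$-block. Since $\int_{\dch}^{\mathrm{reg}}$ vanishes on $\omega_0^{a}\sha v$ for $a\geq 1$ (the word $\omega_0^{a}$ is ${1\over a!}$ times the $a$-th shuffle power of $\omega_0$, so its regularised integral is $0$, and the integral is multiplicative), one obtains a recursion with integer coefficients reducing to words beginning with $\omega_1$; the mirror argument on the terminal $\omega_1$-block then reduces, again with integer coefficients, to convergent words. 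With this correction, your argument is complete.
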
 
The \emph{Drinfeld associator}  is the de Rham image of $\dch$
$$\mathcal{Z}=  \mathrm{comp}_{B, dR}({}_01_1^B) \in \opi(\CC)$$
 Explicitly, it is  the non-commutative generating series of the integrals $(\ref{genitint})$
\begin{align}
 \mathcal{Z} &= \sum_{i_k \in \{0,1\} }  x_{i_1} \ldots x_{i_n}   \int_{\dch} \omega_{i_1} \ldots   \omega_{i_n}  \\
 & = 1 + \zeta(2) [x_1, x_0]+ \zeta(3) ([x_0,[x_0,x_1]] + [x_1,[x_1,x_0]]  ) +\cdots 
\end{align}
It is an exponential of a Lie series. 
\subsection{Motivic multiple zeta values} \label{sectMotMZV}
By the previous paragraph,  the affine ring of the de Rham  fundamental group is the graded Hopf algebra
$$\Or({}_x \Pi_y) \cong \QQ \langle e_0, e_1 \rangle$$
independently of $x,y \in \{0,1\}$.  Its product is the shuffle product, and its coproduct is deconcatenation. Its basis elements can be indexed by words in $\{0,1\}$. By a general fact about shuffle algebras, the antipode is the 
map
$w \mapsto w^*$  where  $$(a_1\ldots a_n)^* = (-1)^n a_n \ldots a_1$$
is signed reversal of words. Thus any word $w$ in $\{0,1\}$ defines a de Rham element in $\Or({}_x \Pi_y)$.  The augmentation map $\QQ\langle e_0,e_1\rangle \rightarrow \QQ$ corresponds to the unit element in the de Rham fundamental group and 
defines a linear form ${}_x1^{dR}_y \in  \Or({}_x \Pi_y)^{\vee}$. 

Define  Betti linear forms ${}_x1_y^B \in \Or( {}_x \Pi^B_y)^{\vee} $ to be the images of the  paths 
$$ 
   \dch  \hbox{ if } x=0, y=1 \quad ; \quad 
   \dch^{-1}    \hbox{ if }  y=1, x=0  \quad ; \quad 
   c_x   \hbox{ if } x=y \ ,
 $$
 where $\dch$ is the straight path from $0$ to $1$, $\dch^{-1}$ is the reversed path from $1$ to $0$, and $c_x$ is the constant 
 (trivial) path based at $x$.
 
 Out of this data we can construct the following  motivic  periods.

\begin{definition} Let $x,y\in \{0,1\}$ and let $w$ be any word in $\{0,1\}$.  Let
\begin{equation} 
I^{\mm}(x;w;y)  = [  \Or({}_x\Pi_y^{\mathrm{mot}}),  w,  {}_x1_y^B]^{\mm}  \qquad \in \quad \Pe^{\mm,+}_{\MT(\ZZ), \RR} 
\end{equation}
\end{definition} 
We call the elements $I^{\mm}$ motivic iterated integrals. 
 The   `de Rham' motivic period is the matrix coefficient  $ [ \Or({}_x\Pi_y^{\mathrm{mot}}),  w,  {}_x1_y^{dR}]$  on $\MT(\ZZ)$ with respect to the fiber functors $\omega_{dR}, \omega_{dR}$.
It defines a function on  $G^{dR}$. Its   restriction  to the prounipotent group $U^{dR}$ defines an element $I^{\uu}(x;w;y) \in \Or(U^{dR})$. 
The latter are  equivalent to objects  defined by Goncharov  (which he also called motivic iterated integrals).
\begin{defn} Define motivic (resp. unipotent) multiple zeta values by 
\begin{equation}
\zeta^{\bullet} (n_1,\ldots, n_r) = I^{\bullet} (0; 1 0^{n_1-1}  \cdots 1 0^{n_r-1} ; 1) \  ,  \quad \bullet = \mm, \uu \\  \nonumber 
\end{equation}
\end{defn}
It is important to note that $\zetam(2)$ is non-zero, whereas $\zeta^{\uu}(2)=0$.\footnote{One can define a homomorphism 
$\Pe^{\mm,+}_{\MT(\ZZ),\RR} \rightarrow \Pe^{\uu}_{\MT(\ZZ)}$ which sends $\zetam(n_1,\ldots, n_r)$ to $\zeta^{\uu}(n_1,\ldots, n_r)$ and prove that its kernel is the ideal generated by $\zetam(2)$.}
We immediately deduce from the definitions that
\begin{align} \begin{split}  \label{Iwproperties}
(i). & \quad I^{\mm} (x;w;x)   = \delta_{w, \emptyset}  \qquad \hbox{ for } x  \in \{0,1\}   \\
(ii). & \quad I^{\mm} (x;w;y)  = I^{\mm}(y;w^*;x)     \end{split}
\end{align} 
The first property holds because the constant path is trivial, the second follows from the antipode formula and because $\dch \circ \dch^{-1}$, or $\dch^{-1} \circ \dch$,   is  homotopic to a constant path. Finally, replacing multiple zeta values with their motivic versions, we can define a  motivic version of  the Drinfeld associator
\begin{equation} \label{motivicDrinfeld}
\mathcal{Z}^{\mm} = \sum_{i_1,\ldots, i_n \in \{0,1\}} x_{i_1}\ldots x_{i_n} I^{\mm}(0; i_1, \ldots,  i_n;1)  \ .
\end{equation} 
It satisfies the associator equations defined by Drinfeld \cite{Drinfeld}, on replacing $2 \pi i $ by $\Lef^{\mm}$ (using the fact that $\zetam(2) = {-(\Lef^{\mm})^2 \over 24} $),
and the double shuffle equations of \cite{Racinet}.

\subsection{Action of the motivic Galois group} \label{sectActUdr}
The category $\MT(\ZZ)$ is a Tannakian category with respect to the de Rham fiber functor.  Therefore the motivic Galois 
group acts on the affine ring $\Or(\opi)$ of the de Rham realisation of the motivic fundamental torsor of path $(\ref{pi1motivic})$. A slight generalisation of theorem \ref{thmpi1ismotivic}
shows that $G^{dR}$ acts on the de Rham fundamental schemes
$${}_x \Pi_y \qquad \hbox{ for all } x, y \in \{0,1\} $$
and furthermore, is compatible with the following structures:
\begin{itemize}
\item (Groupoid structure). The multiplication maps
$${}_x \Pi_y \times {}_y \Pi_z \longrightarrow   {}_x \Pi_z $$
for all $x,y,z \in \{0,1\}$. 
\item (Inertia). The action of $U^{dR}$ fixes the elements  
$$\exp(x_0) \hbox{ in } {}_0 \Pi_0(\QQ) \qquad \hbox{ and } \qquad  \exp(x_1) \hbox{ in } {}_1 \Pi_1(\QQ)
$$
\end{itemize}

The groupoid structure is depicted in   figure 1.

\begin{figure}[h!]
  \begin{center}
   \epsfxsize=4cm \epsfbox{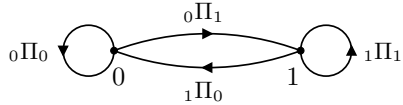}
     \put(-93,-3){$0$}  \put(-66,22){\small ${}_0 \Pi_1$}
        \put(-27,-3){$1$}  \put(-66,-8){\small ${}_1 \Pi_0$}
         \put(-132,7){\small ${}_0 \Pi_0$}
     \put(2,7){\small ${ {}_1 \Pi_1}$}
 \caption{The groupoid ${}_x\Pi_y$ for $x, y\in \{0,1\}$. The diagram only represents the groupoid structure; the paths shown do not accurately depict  the tangential base points.}
  \end{center}
\end{figure}

The  local monodromy map 
$ \pi_1^{top}(\GG_m, \tone_0) \rightarrow  \pi_1^{top}(X, \tone_0) $  (where we write  $\GG_m$ for  $\Pro^1\backslash \{0,\infty\}$),   corresponding to monodromy around $0$, 
 has a motivic analogue which gives rise to  the inertial condition. Its de Rham realisation is the map
 $$ \pi_1^{dR}(\GG_m, \tone_0)  \rightarrow   \pi_1^{dR}(X, \tone_0)=  {}_0 \Pi_{0}$$  
and is respected by $G^{dR}$. One shows that  $U^{dR}$ acts trivially on $ \pi_1^{dR}(\GG_m, \tone_0)$, 
and furthermore that the element $\exp(x_0) \in {}_0 \Pi_0(\QQ)$ is in the image of the previous map. This gives the first inertial condition. 
\begin{remark} It is astonishing that one obtains much  useful information at all from such symmetry considerations.  Nonetheless, it is enough to show the faithfulness of the action of $G^{dR}$ (below).  There are further structures respected by $G^{dR}$, such as  compatibilities with automorphisms of $\Pro^1 \backslash \{0,1,\infty\}$. They are not required.
\end{remark}

\subsection{Ihara action} \label{sectIharaact}
Let $\Au$ denote the group of automorphisms of the groupoid ${}_x \Pi_{y}$ for $x,y\in \{0,1\}$ which respects the structures
$1,2$ described in \S\ref{sectActUdr}. 
\begin{proposition} The scheme $\opi$ is an $\Au$-torsor. In particular, the action of $\Au$ on $1 \in \opi$ defines   an isomorphism of schemes
\begin{equation}  \label{Auactson1}
a \mapsto a(1) : \Au  \longrightarrow  \opi\ .
\end{equation}
The action  of $\Au$  on $\opi$  defines, via this isomorphism,  a new group law $$\circ: \opi \times \opi \rightarrow \opi \ . $$ 
 It is given explicitly on   formal power series by  Ihara's formula
\begin{align}  \label{Iharaaction}
A(x_0,x_1) \circ G(x_0,x_1) & = G(x_0, A x_1 A^{-1}) A 
\end{align}
\end{proposition}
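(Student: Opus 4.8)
The plan is to exploit the description of the de Rham fundamental schemes as group-like power series to reduce everything to explicit manipulations. First I would unwind the torsor claim: an automorphism $a \in \Au$ is determined by its effect on the four schemes ${}_x\Pi_y$ compatibly with the groupoid law, so it is determined by the single element $a({}_01_0) \in {}_0\Pi_0$, the constant path at $0$ — wait, more precisely by how $a$ acts on a chosen base point path. The cleanest route is to observe that $\opi = {}_0\Pi_1$ is a torsor under the groups ${}_0\Pi_0$ (acting on the left) and ${}_1\Pi_1$ (acting on the right) via the groupoid multiplication, and that the inertia condition pins down the ${}_1\Pi_1$-part: an element of $\Au$ must send $\exp(x_1) \in {}_1\Pi_1$ to itself, which forces its restriction to ${}_1\Pi_1$ to be conjugation by some series, and similarly $\exp(x_0) \in {}_0\Pi_0$ is fixed. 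Tracking through, an automorphism is encoded by a single group-like series $A = A(x_0,x_1) \in \opi(R)$, namely $A = a(1)$ where $1 \in \opi$ is the identity-like point (the image of $\dch$ under the de Rham comparison, or equivalently the unit of the shuffle Hopf algebra). This gives the bijection $(\ref{Auactson1})$; one checks it is a morphism of schemes by noting both sides are pro-affine and the construction is functorial in $R$.

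Next I would pin down the formula. Given the bijection $a \mapsto A := a(1)$, the induced product $A \circ G$ is by definition $a(G)$, the image of $G \in \opi$ under the automorphism $a$ corresponding to $A$. The constraints on $a$ are: (i) it respects the groupoid structure, hence its effect on ${}_0\Pi_0$ and on ${}_1\Pi_1$ is determined by its effect on $\opi = {}_0\Pi_1$ via the relations ${}_0\Pi_0 = \opi \cdot {}_1\Pi_1 \cdot \opi^{-1}$ and so on; (ii) it fixes $\exp(x_0) \in {}_0\Pi_0$ and $\exp(x_1) \in {}_1\Pi_1$. Constraint (ii) applied to ${}_1\Pi_1$ says $a$ acts on ${}_1\Pi_1$ as an automorphism fixing $\exp(x_1)$; combined with the requirement that $a$ on ${}_0\Pi_1$ intertwines the left ${}_0\Pi_0$-action and right ${}_1\Pi_1$-action, one finds the action on the free generators must be $x_0 \mapsto x_0$ (from the ${}_0\Pi_0$ inertia, thinking of ${}_0\Pi_0 \ni \exp(x_0)$) and $x_1 \mapsto A x_1 A^{-1}$ (the only group-like deformation of $x_1$ fixing the conjugacy class of $\exp(x_1)$ and compatible with $a(1) = A$). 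Therefore for a general group-like $G(x_0,x_1)$, functoriality of the automorphism under the Hopf algebra structure (it is a ring homomorphism for concatenation and respects the coproduct) gives $a(G) = G(a(x_0), a(x_1)) \cdot a(1) = G(x_0, A x_1 A^{-1})\, A$, which is precisely $(\ref{Iharaaction})$. One then verifies directly that this formula does define a group law — associativity $(A \circ B) \circ G = A \circ (B \circ G)$ is a short computation using that $A \circ B = B(x_0, A x_1 A^{-1}) A$ — confirming that the bijection transports the composition in $\Au$ to $\circ$.

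The main obstacle is step two: justifying rigorously that the only automorphisms of the groupoid respecting structures 1--2 are those of the stated form $x_0 \mapsto x_0$, $x_1 \mapsto A x_1 A^{-1}$. The inertia condition as stated only fixes $\exp(x_0)$ and $\exp(x_1)$ literally (not merely up to conjugacy), and one has to feed in the groupoid compatibility across all of ${}_0\Pi_0, {}_0\Pi_1, {}_1\Pi_0, {}_1\Pi_1$ to propagate this into the rigidity of the action on the generator $x_1$; the subtlety is that an automorphism of ${}_0\Pi_1$ is a priori only a torsor map, and one must argue it extends uniquely and is controlled by a single series. The cleanest way around this is to identify $\Au$ first as a group scheme by its functor of points — a family of compatible automorphisms — and then show the evaluation-at-$1$ map is injective (the automorphism is recovered from $A$ by the Ihara formula) and surjective (every group-like $A$ yields, via the Ihara formula, an automorphism respecting structures 1--2, which is the routine verification). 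I would relegate the detailed check that the Ihara formula genuinely respects the groupoid multiplication maps ${}_x\Pi_y \times {}_y\Pi_z \to {}_x\Pi_z$ to a direct but unilluminating computation on power series.
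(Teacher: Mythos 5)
Your overall strategy coincides with the paper's: encode $a \in \Au$ by the single group-like series $A = a(1)$, use the inertia and groupoid constraints to determine the action on ${}_0\Pi_0$, and conclude $a(G) = a_{00}(G)\cdot A = G(x_0, A x_1 A^{-1})\,A$. But the step you yourself flag as ``the main obstacle'' is exactly the crux, and your proposal never closes it. Your justification that the generator must go to $A x_1 A^{-1}$ (``the only group-like deformation of $x_1$ fixing the conjugacy class of $\exp(x_1)$ and compatible with $a(1)=A$'') is not an argument: the inertia condition says nothing a priori about conjugacy classes inside ${}_0\Pi_0$, and even granting that $a_{00}(\exp(x_1))$ is conjugate to $\exp(x_1)$, nothing you wrote identifies the conjugating series with $A$ rather than some other element. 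Your fallback route is circular on the decisive side: proving that $a \mapsto a(1)$ is injective ``because the automorphism is recovered from $A$ by the Ihara formula'' presupposes that every element of $\Au$ is given by the Ihara formula, which is precisely what has to be shown; only the surjectivity half (direct verification that the formula defines an element of $\Au$) is the routine check you call it.

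The gap is closed by a short concrete computation which you should supply, and which is how the paper argues. Write $a_{xy}$ for the action of $\aaa\in\Au$ on ${}_x\Pi_y$. Compatibility with the group law on ${}_0\Pi_0$ forces $a_{00}(1)=1$; applying $\aaa$ to $1\cdot 1=1$ under ${}_0\Pi_1\times{}_1\Pi_0\to{}_0\Pi_0$ gives $a_{01}(1)\,a_{10}(1)=1$, hence $a_{10}(1)=A^{-1}$. Now apply $\aaa$ to the identity $1\cdot\exp(x_1)\cdot 1=\exp(x_1)$ read through the composition ${}_1\Pi_0\times{}_0\Pi_0\times{}_0\Pi_1\to{}_1\Pi_1$ (all these schemes being canonically identified); the inertia condition at the point $1$ fixes the right-hand side, so $A^{-1}\,a_{00}(\exp(x_1))\,A=\exp(x_1)$, i.e. $a_{00}(\exp(x_1))=\exp(A x_1 A^{-1})$. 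Together with $a_{00}(\exp(x_0))=\exp(x_0)$ this determines $a_{00}$ completely (the subgroup generated by $\exp(x_0)$ and $\exp(x_1)$ is Zariski dense and $a_{00}$ is a group-scheme automorphism), and applying $\aaa$ to $g\cdot 1=g$ gives $a_{01}(g)=a_{00}(g)\cdot A$, which is Ihara's formula and, at the same time, the uniqueness statement you needed for injectivity. With this derivation inserted, the rest of your outline (surjectivity by direct verification, transport of the group law, associativity check) is correct and matches the paper.
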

\begin{proof}
For the basic geometric idea, see \cite{IharaICM}, \S 2.3. Let $\aaa \in \Au$, and write $a_{xy}(\xi)$ for the action of $\aaa$
on $\xi\in {}_x \Pi_y$. Write $a = a_{01} (1)$. Since ${}_0 \Pi_0$ is a group, $\aaa$ acts trivially on its identity element, and so
$a_{00}(1)=1$. Via the map $ {}_0 \Pi_{1} \times {}_1 \Pi_{0}  \rightarrow {}_0 \Pi_{0}$ we have 
$a_{01}(1) a_{10}(1) = a_{00}(1) $
and hence $a_{10}(1) = a^{-1}$.  The inertial conditions give
\begin{equation}\label{Autinert} a_{00} (\exp(x_0)) = \exp(x_0) \quad \hbox{ and }  \quad a_{11} (\exp(x_1))= \exp(x_1)
\end{equation} 
 Now the composition of paths
$ {}_1 \Pi_{0} \times {}_0 \Pi_{0} \times {}_0 \Pi_{1} \rightarrow {}_1 \Pi_{1}$
gives rise to an equation $1. \exp(x_1). 1 = \exp(x_1)$. Applying $\aaa$ to this gives by the second equation in $(\ref{Autinert})$
\begin{equation} \label{Aooonexpx1} 
a_{00}( \exp(x_1)) = a \exp(x_1) a^{-1}  = \exp( a x_1 a^{-1}) 
\end{equation} 
which completely determines the action of $\Au$ on ${}_0 \Pi_0$. Via the map $ {}_0 \Pi_{0} \times {}_0 \Pi_{1}  \rightarrow {}_0 \Pi_{1}$
we have the equation $g. 1 = g$, and hence 
\begin{equation}\label{a01froma00}
a_{01}(g)  = a_{00}(g) . a \ .
\end{equation}
Formula $(\ref{Iharaaction})$  follows from $(\ref{Autinert})$, $(\ref{Aooonexpx1})$, $(\ref{a01froma00})$. One easily checks that $a$ uniquely determines $\aaa$, and so  $(\ref{Auactson1})$ is an isomorphism (see also \cite{DG}, 5.9.)
\end{proof} 

The groupoid and inertia structures are preserved by $U^{dR}$, giving  a morphism
\begin{equation} \label{UtoAu}
 \rho: U^{dR} \longrightarrow \Au \overset{(\ref{Auactson1})}{\cong} \opi
 \end{equation} 
such that the following diagram commutes
\begin{align}  \label{udrcommutativediagram}
 U^{dR}  \times \opi&    \longrightarrow \opi    \\  
  { {}_{\rho\times \mathrm{id}}} \downarrow \qquad   &  \ \quad \quad \downarrow_{\mathrm{id}}  \nonumber \\
  \opi  \times \opi  & \overset{\circ}{\longrightarrow} \opi  \nonumber
\end{align}

In principle this describes the action of the motivic Galois group on $\opi$. Note, however, that the map $(\ref{UtoAu})$ is mysterious and very
 little is known about it.

\subsection{Dual formula}  \label{sectDualformula} The coaction on motivic iterated integrals is  dual to Ihara's formula.
Dualising $(\ref{udrcommutativediagram})$, we have
$$\Delta: \Or(\opi) \longrightarrow \Or(U^{dR}) \otimes \Or(\opi)$$
It is equivalent, but more convenient, to consider the infinitesimal coaction 
$$D \ : \  \Or(\opi) \longrightarrow   \mathcal{L}  \otimes \Or(\opi)   \qquad \big( D(x) = \Delta(x) - 1\otimes x  \mod  \Or(U^{dR})_{>0} ^2 \big)$$
where $\mathcal{L} =  \Or (U^{dR})_{>0}/\big(\Or(U^{dR})_{>0} \big)^2  $ is the Lie coalgebra of indecomposables in $\Or(U^{dR})$. 
The following formula  is an infinitesimal variant of a formula due to Goncharov \cite{GG}, relating to slightly different objects. In order to fill a gap in the literature,
I will  sketch how it follows almost immediately from Ihara's formula.

 \begin{proposition} Let $a_0,\ldots, a_{n+1}\in \{0,1\}$.  The coaction $D$ is given by 
\begin{align}\label{mainformula}   
 D ( I^{\mm}(a_0;a_1,\ldots, a_n; a_{n+1}) )  = & \sum_{0\leq p<q \leq n} \big[  I^{\uu} (a_{p} ;a_{p+1},\ldots, a_{q}; a_{q+1}) \big] \\
  & \otimes   I^{\mm}(a_{0}; a_1, \ldots, a_{p}, a_{q+1}, \ldots ,  a_n ;a_{n+1}) \nonumber  \ .
  \end{align}
  where the  square brackets on the left denote  the map $[\,\,]: \Or(U^{dR})_{>0} \rightarrow \mathcal{L}$.
\end{proposition}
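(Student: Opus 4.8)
The plan is to deduce the infinitesimal coaction formula $(\ref{mainformula})$ directly from Ihara's formula $(\ref{Iharaaction})$ together with the commutative diagram $(\ref{udrcommutativediagram})$, by linearising everything. First I would make explicit what the infinitesimal coaction $D$ computes: dualising $(\ref{udrcommutativediagram})$ gives $\Delta:\Or(\opi)\to\Or(U^{dR})\otimes\Or(\opi)$, and $D$ retains only the part of $\Delta$ linear in $\mathcal{L}=\Or(U^{dR})_{>0}/(\Or(U^{dR})_{>0})^2$. Concretely, this means I work with the action of $U^{dR}$ on $\opi$ to first order: write an element of $U^{dR}$ near the identity as $\exp(\varepsilon\,\delta)$ with $\varepsilon^2=0$, let $A=\rho(\exp(\varepsilon\delta))=1+\varepsilon\,\rho_1(\delta)\in\opi$ where $\rho_1(\delta)$ is a Lie series in $x_0,x_1$, and then $(\ref{Iharaaction})$ gives $A\circ G = G(x_0,Ax_1A^{-1})A = G(x_0,x_1+\varepsilon[\rho_1(\delta),x_1]) + \varepsilon\, G(x_0,x_1)\rho_1(\delta)$ to first order in $\varepsilon$. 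So the first-order variation of $G$ under the Ihara action has two contributions: one from conjugating $x_1$ inside $G$, and one from right-multiplication by $\rho_1(\delta)$.

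Next I would translate this into the language of iterated integrals. The coordinates on $\opi$ dual to words $w$ in $\{0,1\}$ are the de Rham periods $I^{dR}(0;w;1)$, and via the groupoid structure and the relations $(\ref{Iwproperties})$ one has the analogous coordinates $I^{dR}(a_p;\cdots;a_{q+1})$ on the various ${}_x\Pi_y$; restricting from $G^{dR}$ to $U^{dR}$ turns these into the $I^{\uu}$'s. The series $G=\mathcal{Z}^{dR}$ (the de Rham Drinfeld associator) has coefficients $I^{dR}(0;\,\cdot\,;1)$, and the two operations above — substituting $x_1\mapsto x_1+\varepsilon[\rho_1(\delta),x_1]$ into a word, and multiplying on the right by a Lie element — are precisely the combinatorial operations of \emph{removing a contiguous subsequence} $a_{p+1},\ldots,a_q$ from the word $a_1\ldots a_n$ and recording it in the left tensor factor as $I^{\uu}(a_p;a_{p+1},\ldots,a_q;a_{q+1})$. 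The boundary terms $a_p$ and $a_{q+1}$ appear because conjugating $x_1$ inserts an $x_1$ on each side of the excised block, while the right-multiplication term is the special case $q=n$ (excising a terminal segment), which is why the sum runs over $0\le p<q\le n$ with the convention that $a_{n+1}$ supplies the right endpoint. Assembling the generating series identity coefficient by coefficient yields $(\ref{mainformula})$.

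The main obstacle I anticipate is bookkeeping the endpoints and the passage between the four pieces ${}_0\Pi_0,{}_0\Pi_1,{}_1\Pi_0,{}_1\Pi_1$ of the groupoid: Ihara's formula is stated for the single torsor $\opi={}_0\Pi_1$, but the right-hand side of $(\ref{mainformula})$ involves $I^{\uu}(a_p;\cdots;a_{q+1})$ for \emph{all} $a_p,a_{q+1}\in\{0,1\}$, so one must use the $G^{dR}$-equivariant groupoid multiplication maps $(\ref{udrcommutativediagram})$ and the symmetry $(\ref{Iwproperties})(ii)$ to express the conjugation action $x_1\mapsto Ax_1A^{-1}$ in terms of path-composition and thereby generate the $a_p\in\{0\}$ versus $a_p\in\{1\}$ cases uniformly. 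A secondary technical point is verifying that only the \emph{linear} part in $\mathcal{L}$ survives — i.e., that working to first order in $\varepsilon$ genuinely captures $D$ and not more — which follows because $\Or(U^{dR})_{>0}^2$ is killed by construction, but should be stated carefully. Once the dictionary "first-order Ihara action $=$ excise a contiguous block with its two neighbours" is set up, the rest is a routine, if fiddly, expansion of $(\ref{Iharaaction})$; I would present it as a direct computation rather than invoking Goncharov's formula $\cite{GG}$, precisely to fill the gap alluded to in the text.
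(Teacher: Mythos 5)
Your proposal takes essentially the same route as the paper: it linearises Ihara's formula $(\ref{Iharaaction})$ — the paper does this via the Lie algebra action $a\circ_0 x_0=0$, $a\circ_0 x_1=ax_1-x_1a$, which is exactly your first-order expansion in $\varepsilon$, together with the right-multiplication term coming from $(\ref{a01froma00})$ for the terminal blocks — and then dualises, matching insertions of the primitive element next to occurrences of $x_1$ with excisions of contiguous subwords, the $(a_p,a_{q+1})=(1,0)$ case being handled by the signed-reversal identity $I^{\uu}(1;w^*;0)=I^{\uu}(0;w;1)$ and the $(0,0)$, $(1,1)$ cases dropping out. The one loose phrase is that conjugation ``inserts an $x_1$ on each side of the excised block'' (it inserts $a$ and $a^*$ on either side of each $x_1$, so only one neighbour of the block is forced to be $x_1$), but since you explicitly flag the endpoint bookkeeping and invoke $(\ref{Iwproperties})$(ii), this is precisely the four-case verification the paper's sketch carries out.
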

\begin{proof}
Denote the action of $\mathrm{Lie}\, \Au$ on $\mathrm{Lie}\, {}_0 \Pi_0$ by $\circ_0$.
By     $(\ref{Auactson1})$,   $\mathrm{Lie}\, \Au \cong \mathrm{Lie}\, \opi$  is the set of 
primitive elements in its (completed) universal enveloping algebra  which we  denote simply by
$\mathcal{U} (\opi)$.
By $(\ref{Autinert})$  and $(\ref{Aooonexpx1})$  we have 
$ a \circ_0 x_0 = 0$   and $a \circ_0 x_1 = a x_1 - x_1 a$. 
 The antipode on $\mathcal{U} (\opi)$ is  given by the signed reversal $*$.   Since  $a \in   \mathcal{U} (\opi)$ is   primitive,  $a=-a^*$ and also 
$$  a \circ_0 x_0 = 0 \qquad \hbox{ and } \qquad  a \circ_0 x_1 = a x_1  + x_1 a^*\ .$$
This extends to an action on $\mathcal{U} ( {}_0 \Pi_0)$ via  $a\circ_0 w_1w_2 = (a \circ_0 w_1)w_2+ w_1(a \circ_0 w_2)$.
Now consider the action $a \circ_0 \cdot $  on the following words. All terms  are omitted except those terms where $a$ or $a^*$ is inserted in-between the  two bold letters:
\begin{align}
a \, \circ_0 \,w_1  \mathbf{ x_0 x_0} w_2  & = \cdots \  + \  0  \ +  \ \cdots \nonumber \\ 
a \,\circ_0 \,w_1  \mathbf{ x_0 x_1} w_2  & = \cdots \ +\  w_1 \mathbf{ x_0  a x_1 } w_2 \ + \ \cdots \nonumber \\ 
a\, \circ_0\, w_1  \mathbf{ x_1 x_0} w_2  & = \cdots  \ + \  w_1 \mathbf{ x_1 a^* x_0 } w_2   \ + \ \cdots \nonumber \\ 
a \,\circ_0\, w_1  \mathbf{ x_1 x_1} w_2  & =  \cdots  \ + \ \underbrace{  w_1 \mathbf{ x_1 a x_1  }  w_2 + w_1 \mathbf{ x_1 a^* x_1  }  w_2 }_{0}  \ + \ \cdots \nonumber 
\end{align}  
These four equations are  dual to all but the first and last  terms in  $(\ref{mainformula})$, using the fact that $I^{\uu}(x;w;x)=0$ for $x=0,1$ (first and fourth lines),
and the fact that $I^{\uu}(1;w^*;0) = I^{\uu}(0;w;1)$ (third line).  
A straightforward modification of the above argument taking into account the initial and final terms (using $(\ref{a01froma00})$)  shows   that the action $\circ_1$ of $\mathrm{Lie}\, \Au$ on $\opi$
is dual to the full expression $(\ref{mainformula})$.
\end{proof}
Armed with this formula, we  immediately deduce 
that for all $n\geq 2$, 
\begin{align} \label{zetamprimitives}
D\,  \zetam(n) & =   [\zeta^{\uu}(n)] \otimes 1  
\end{align}
where  we recall that $\zeta^{\uu}(2n)=0$. One easily shows that $\zeta^{\uu}(2n+1) \neq 0$ for $n\geq 1$.   See also \cite{HaMa}.
Denote the map $ w \mapsto [I^{\uu}(0;w;1)]: \Or(\opi)_{>0} \rightarrow \mathcal{L}$ simply by 
$\xi \mapsto [\xi^{\uu}]$. 
 From the structure $\S\ref{sectIntroUnip},1$ of $G^{dR}$ we have the following  converse to  $(\ref{zetamprimitives})$ (\cite{BMTZ},  \S3.2). 
\begin{theorem} \label{thmprimitives}
An  element $\xi \in \Or(\opi)$ of weight $n \geq 2 $  satisfies $D\xi=[\xi^{\uu}]\otimes 1$   if and only if $\xi \in \QQ\, \zetam(n)$.
\end{theorem}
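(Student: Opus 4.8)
The plan is to exploit the structure of $U^{dR}$ described in \S\ref{sectIntroUnip}, (1): its graded Lie algebra is free on generators $\sigma_3, \sigma_5, \ldots$, so the graded Lie coalgebra $\mathcal{L}$ is the graded dual of a free Lie algebra. The kernel of $D$ restricted to weight $n$ will be identified with $\QQ\,\zetam(n)$ by a dimension count: I would first show that any $\xi$ of weight $n$ with $D\xi = [\xi^{\uu}] \otimes 1$ lies in the same one-dimensional space as $\zetam(n)$ once we know that $\zetam(n) \ne 0$ (for $n$ odd this is because its period $\zeta(n) \ne 0$; for $n$ even we use $\zetam(2) = -(\Lef^{\mm})^2/24 \ne 0$ and more generally the relation expressing $\zetam(2n)$ as a rational multiple of $(\Lef^{\mm})^n$, whose period $\zeta(2n)$ is a nonzero rational multiple of $\pi^{2n}$). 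Since \eqref{zetamprimitives} already gives the forward implication, only the converse needs work.

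For the converse, first I would reduce to the case where $\xi$ has weight $n$ and is a motivic period of $\MT(\ZZ)$ (which is where $\Or(\opi)$ lives), and consider the weight-graded pieces. The condition $D\xi = [\xi^{\uu}] \otimes 1$ says that the ``nontrivial part'' of the coaction $\Delta\xi$ takes values only in $\mathcal{L}_n \otimes (\Or(\opi))_0 = \mathcal{L}_n \otimes \QQ$; equivalently, the component of $D\xi$ in $\mathcal{L}_k \otimes (\Or(\opi))_{n-k}$ vanishes for all $0 < k < n$. The key step is to iterate: apply $(\mathrm{id} \otimes D)$ and use coassociativity to see that the full reduced coproduct of $\xi$ into $\bigotimes \mathcal{L}$ is concentrated in the single tensor factor $\mathcal{L}_n$. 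Because the Lie algebra of $U^{dR}$ is free, an element of $\Or(U^{dR})$ whose higher coproducts vanish and which is primitive in weight $n$ must be a scalar multiple of the generator dual to $\sigma_n$ — but that generator only exists for $n$ odd. This dichotomy is exactly matched by $\zetam(n)$: for $n$ odd, $[\zetam(n)^{\uu}] = [\zeta^{\uu}(n)]$ spans $\mathcal{L}_n$ (one-dimensional), and for $n$ even, $\zeta^{\uu}(n) = 0$ so $D\xi = 0$, forcing $\xi$ to be a weight-$n$ element killed by the entire coaction, i.e. $\xi \in \QQ \cdot (\Lef^{\mm})^{n}$ — but $(\Lef^{\mm})^n$ has even ``weight'' $n$ and is indeed, up to scalar, $\zetam(n)$ via the relation above.

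Concretely, I would argue as follows: let $V_n = \{\xi \in (\Or(\opi))_n : D\xi \in \mathcal{L}_n \otimes \QQ\}$. The map $\xi \mapsto [\xi^{\uu}]$ sends $V_n$ to $\mathcal{L}_n$, and $\zetam(n) \in V_n$. It suffices to prove $\dim V_n \le 1$. For this, consider the kernel $V_n^0$ of $\xi \mapsto [\xi^{\uu}]$ on $V_n$; an element there satisfies $D\xi = 0$, hence $\Delta\xi = 1 \otimes \xi \bmod (\Or(U^{dR})_{>0})^2$, and by induction on weight (peeling off one $\mathcal{L}$-factor at a time via coassociativity, exactly the standard argument that the primitives of a connected graded Hopf algebra detect indecomposables) $\xi$ must be $G^{dR}$-invariant up to lower-weight corrections, which in $\Or(\opi)$ pins it down to a polynomial in $\Lef^{\mm}$ — in weight $n$ this is one-dimensional if $n$ is even and zero if $n$ is odd. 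Combined with $\dim \mathcal{L}_n = 1$ for $n$ odd and $0$ for $n$ even (Borel's theorem), we get $\dim V_n \le 1$ in all cases, and since $\zetam(n) \in V_n$ is nonzero, $V_n = \QQ\,\zetam(n)$.

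The main obstacle is the inductive step showing that an element of $\Or(\opi)$ annihilated by the reduced coaction $D$ (equivalently, whose image in the indecomposables $\mathcal{L}$ vanishes at every stage) is forced down to a polynomial in $\Lef^{\mm}$. This requires knowing that the only $G^{dR}$-subrepresentations of $\Or(\opi)$ in low weight that are ``coaction-trivial'' are the Tate pieces $\QQ(-k) = \QQ(\Lef^{\mm})^k$ — which is where one genuinely uses that $\opi$ is an ind-object of $\MT(\ZZ)$ together with the freeness and the absence of generators in even degree. Everything else (the reduction, the forward direction, the nonvanishing of $\zetam(n)$) is formal or already in the excerpt; the cited reference \cite{BMTZ}, \S3.2 presumably handles precisely this representation-theoretic input, and I would lean on the free Lie algebra structure of $\mathrm{Lie}\, U^{dR}$ to make the induction go through cleanly.
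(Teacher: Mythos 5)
Your strategy is essentially the one the paper itself delegates to \cite{BMTZ}, \S3.2: the forward implication is $(\ref{zetamprimitives})$, and the converse rests on exactly the two structural inputs you isolate, namely $\dim_{\QQ}\mathcal{L}_n=1$ for odd $n\geq 3$ and $0$ otherwise, and the fact that an element killed by $D$ is $U^{dR}$-invariant and hence a rational multiple of a power of $\Lef^{\mm}$. Your $V_n$ dimension count is then the same computation that the reference performs in a (noncanonical) model of the geometric real period algebra as $\Or(U^{dR})\otimes \QQ[\zetam(2)]$, and you are right that this invariant-theoretic statement, not freeness of $\mathrm{Lie}\,U^{dR}$, is the crux; deferring it to the citation is consistent with what the paper does.

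There is, however, one step that is wrong as written and leaves a genuine (if repairable) gap in the odd-weight case: you claim the coaction-trivial part in weight $n$ consists of polynomials in $\Lef^{\mm}$ and is therefore ``zero if $n$ is odd''. In the ambient ring $\Pe^{\mm,+}_{\MT(\ZZ)}$ the $U^{dR}$-invariants of odd weight $n$ are $\QQ\,(\Lef^{\mm})^{n}\neq 0$, and every element $\zetam(n)+c\,(\Lef^{\mm})^{n}$, $c\in\QQ$, satisfies $D\xi=[\xi^{\uu}]\otimes 1$; so $U^{dR}$-invariance alone gives $\dim V_n\leq 2$, not $1$. What rescues the argument is the real structure, which you never invoke: the motivic iterated integrals $I^{\mm}(0;w;1)$ lie in $\Pe^{\mm,+}_{\MT(\ZZ),\RR}$, where the real Frobenius acts trivially while it acts by $(-1)^n$ on $(\Lef^{\mm})^n$, so odd powers of $\Lef^{\mm}$ are excluded (equivalently, argue via periods: regularised iterated integrals on $\dch$ are real, $(2\pi i)^n$ is not for $n$ odd). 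Once this is added, your count closes in both parities. Two smaller points: the theorem must be read in the image of $\Or(\opi)$ inside $\Pe^{\mm,+}_{\MT(\ZZ),\RR}$ (on the literal $2^n$-dimensional space $\Or(\opi)_n$ the kernel of $D$ is far larger), which you did notice; and $\zetam(2n)$ is a rational multiple of $(\Lef^{\mm})^{2n}$, not of $(\Lef^{\mm})^{n}$.
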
 This theorem, combined  with   $(\ref{mainformula})$,  provides a powerful
method for proving identities between motivic multiple zeta values. Applications are given in \cite{BrDec}.  

\section{The main  theorem  and  consequences} \label{sectMain}
Theorem $\ref{thmHoffMZVLi}$ is a result about linear independence. There is an analogous statement for  algebraic independence 
of motivic multiple zeta values.
 
 \begin{defn} Let $X$ be an alphabet (a set) and let $X^{\times}$ denote the free associative monoid generated by $X$.  Suppose that $X$ has a total ordering  $<$, and  extend it to $X^{\times}$
 lexicographically. 
 An element $w\in X^{\times}$ is said to be a Lyndon word if 
 $$ w <u \quad  \hbox{ whenever } \quad  w = u v \quad  \hbox{ and }  \quad u, v \neq \emptyset\ .$$
 \end{defn}
 For an ordered set $X$, let $\mathrm{Lyn(X)}$ denote the set of Lyndon words in $X$.
 \begin{theorem} \label{thmAlgInd} Let $X_{3,2} = \{2,3\}$ with the ordering $3<2$. The set of elements 
 \begin{equation} \label{zetamHofflyndon} \zetam(w)\quad \hbox{ where } w \in \mathrm{Lyn}(X_{3,2}^{\times}) 
 \end{equation} 
are algebraically independent over $\QQ$, and generate the algebra $\Ho$ of motivic multiple zeta values.
 \end{theorem}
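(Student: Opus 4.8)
The plan is to derive Theorem \ref{thmAlgInd} from the linear independence statement of Theorem \ref{thmHoffMZVLi} by a purely formal argument combining the structure of free shuffle algebras (Radford's theorem) with the dimension count \eqref{dNdef}. First I would recall that the words in $\{2,3\}$, viewed as elements $\zetam(w)$, all lie in $\Ho \subset \Pe^{\mm,+}_{\MT(\ZZ),\RR}$, and that by Theorem \ref{thmHoffMZVLi} these $\zetam(w)$, one for each word $w$ in the letters $\{2,3\}$ of total weight $N$ (weight of the letter $2$ being $2$ and of $3$ being $3$), are linearly independent. The number of such words of weight $N$ is exactly the coefficient of $t^N$ in $1/(1-t^2-t^3)$, hence equals $d_N$. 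Since $\dim_\QQ (\Pe^{\mm,+}_{\MT(\ZZ),\RR})_N = d_N$ and $\Ho$ is a graded subalgebra containing $d_N$ linearly independent elements in degree $N$, we conclude $\Ho = \Pe^{\mm,+}_{\MT(\ZZ),\RR}$ and that $\{\zetam(w) : w \text{ a word in } \{2,3\}\}$ is a graded basis of $\Ho$.

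Next I would invoke Radford's theorem: in a free shuffle algebra $\QQ\langle X\rangle$ on a totally ordered alphabet $X$, the Lyndon words $\mathrm{Lyn}(X)$ form a polynomial (free commutative) generating set, i.e.\ every word is uniquely a shuffle-product of a non-increasing sequence of Lyndon words, and this gives an isomorphism $\QQ\langle X\rangle \cong \QQ[\mathrm{Lyn}(X)]$ as commutative algebras. Applying this with $X = X_{3,2} = \{2,3\}$, the monomials in the Lyndon words $\mathrm{Lyn}(X_{3,2}^\times)$ are precisely the words in $\{2,3\}$, re-expressed via the shuffle product. The key point is that the assignment $w \mapsto \zetam(w)$ (for $w$ a word in $\{2,3\}$) is compatible with products: since motivic MZVs obey the shuffle relations (noted after \eqref{motivicDrinfeld}, or directly: the motivic iterated integrals $I^{\mm}$ satisfy shuffle, hence so do their specializations $\zetam$), a shuffle-monomial in the $\zetam(\text{Lyndon})$ equals the corresponding $\zetam(w)$ up to the combinatorial change of basis that is exactly Radford's. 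Therefore the set of shuffle-monomials in $\{\zetam(\ell) : \ell \in \mathrm{Lyn}(X_{3,2}^\times)\}$ spans the same space as $\{\zetam(w)\}$, namely all of $\Ho$, with matching graded dimensions.

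Finally, algebraic independence follows: the graded pieces match dimension for dimension ($d_N$ monomials of weight $N$ on each side, and the change-of-basis matrix from Radford is unipotent-triangular hence invertible over $\QQ$), so the shuffle-monomials in the $\zetam(\ell)$ are linearly independent in $\Ho$; since these monomials are exactly the free commutative monomials in the symbols $\zetam(\ell)$, the $\zetam(\ell)$ for $\ell \in \mathrm{Lyn}(X_{3,2}^\times)$ are algebraically independent over $\QQ$ and generate $\Ho$. I expect the main obstacle to be purely bookkeeping rather than conceptual: one must be careful that the weight grading on $\Ho$ (where the letter $3$ has weight $3$) matches the grading on $\QQ[\mathrm{Lyn}(X_{3,2})]$ under Radford's isomorphism, and that the generating function $1/(1-t^2-t^3)$ is simultaneously (a) the Hilbert series of $\Pe^{\mm,+}_{\MT(\ZZ),\RR}$ by \eqref{dNdef}, (b) the count of words in $\{2,3\}$ by weight, and (c) the Hilbert series of the free polynomial algebra on $\mathrm{Lyn}(X_{3,2}^\times)$ — the last by the Lyndon-word (cyclotomic-type) identity $\prod_{\ell}(1-t^{|\ell|})^{-1} = 1/(1-t^2-t^3)$. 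Once these three coincide, the triangularity of Radford's basis change closes the argument with no further input beyond Theorem \ref{thmHoffMZVLi}.
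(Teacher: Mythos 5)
There is a genuine gap at the heart of your argument: the shuffle structure you invoke is not the one that actually governs products of motivic multiple zeta values. Radford's theorem applies to the shuffle algebra on the alphabet $\{2,3\}$, but $\zetam(w)$ is an iterated integral whose multiplicative relations are the shuffle relations in the letters $\{\tfrac{dt}{t},\tfrac{dt}{1-t}\}$ (equivalently the stuffle relations on all compositions), and neither of these restricts to the letters $\{2,3\}$. Concretely, $\zetam(2)\zetam(3)$ is \emph{not} $\zetam(2,3)+\zetam(3,2)$: the stuffle gives an extra term $\zetam(5)$, and the $\{0,1\}$-shuffle produces words outside the Hoffman set altogether. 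So your key step --- ``a shuffle-monomial in the $\zetam(\ell)$ equals the corresponding $\zetam(w)$ up to the combinatorial change of basis that is exactly Radford's'' --- is false as a statement about the product in $\Ho$; the monomials in the Lyndon elements, when re-expanded in the Hoffman basis (which you may do, since by Theorem \ref{thmHoffMZVLi} and $(\ref{dNdef})$ that basis spans $\Ho$), have coefficients you do not control, and in particular no triangularity or unipotence is available.

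What is actually needed, and what your proposal does not supply, is a proof that the elements $\zetam(\ell)$, $\ell$ Lyndon, remain linearly independent in the space of indecomposables $\Ho_{>0}/(\Ho_{>0})^2$. The linear basis statement alone does not force this: one can write down an abstract graded polynomial algebra with Hilbert series $1/(1-t^2-t^3)$ together with a homogeneous basis indexed by $\{2,3\}$-words in which the weight-$5$ Lyndon-indexed element is decomposable, so the Lyndon subset fails to generate. The correct route uses the structure of $\MT(\ZZ)$ in an essential way: $\Ho=\Pe^{\mm,+}_{\MT(\ZZ),\RR}$ is (non-canonically) isomorphic as a graded algebra to $\Or(U^{dR})\otimes\QQ[\zetam(2)]$, with $\Or(U^{dR})$ the shuffle Hopf algebra on one generator in each odd degree $\geq 3$ --- hence $\Ho$ is a free polynomial algebra with the number of generators per degree given by the Lyndon count --- and one must then check, via the coaction $(\ref{mainformula})$ and the same Zagier/$2$-adic invertibility analysis that enters the proof of Theorem \ref{thmHoffMZVLi} (or an equivalent triangularity statement with respect to a suitable filtration, e.g.\ by the number of $3$'s), that the Lyndon elements map to a basis of the indecomposables. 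Your Hilbert-series bookkeeping, including the identity $\prod_{\ell}(1-t^{|\ell|})^{-1}=1/(1-t^2-t^3)$ and the deduction $\Ho=\Pe^{\mm,+}_{\MT(\ZZ),\RR}$, is fine, but it only shows the counts match; without the indecomposability input the algebraic independence and generation do not follow.
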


Theorem $\ref{thmAlgInd}$ implies that every motivic multiple zeta value is equal to a unique polynomial with rational coefficients in the elements
$(\ref{zetamHofflyndon})$.  It is often convenient to modify this generating family by  replacing $\zetam(3,2,\ldots, 2)$ (a three followed by $n-1$ two's) with $\zetam(2n+1)$
 (by theorem \ref{thmZagierthm}).
Taking the period yields the 

\begin{coro} Every multiple zeta value  is a polynomial, with coefficients in $\QQ$, in
\begin{equation} \label{ZHL} \zeta(w)\quad \hbox{ where } w \in \mathrm{Lyn}(X_{3,2}^{\times}) \ .
 \end{equation} 
\end{coro}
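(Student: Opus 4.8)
The plan is to obtain the corollary as an immediate consequence of Theorem~\ref{thmAlgInd}, by applying the period homomorphism $(\ref{permap})$. First I would recall from \S\ref{sectMotMZV} that the category $\MT(\ZZ)$ carries the canonical comparison isomorphism $\mathrm{comp}_{B,dR}$, so that pairing with it defines a \emph{ring} homomorphism $\per : \Pe^{\mm}_{\MT(\ZZ)} \to \CC$, built precisely so that $\per\big(\zetam(n_1,\ldots,n_r)\big)$ equals the iterated integral $(\ref{MZVasitint})$, i.e. the genuine multiple zeta value $\zeta(n_1,\ldots,n_r)$. More generally, for a word $w=(w_1,\ldots,w_k)\in X_{3,2}^{\times}$ one has $\per\big(\zetam(w)\big)=\zeta(w)$, where $\zeta(w):=\zeta(w_1,\ldots,w_k)$, because $\zetam(w)$ is by definition the class in $\Pe^{\mm,+}_{\MT(\ZZ),\RR}$ of the corresponding motivic iterated integral and $\per$ recovers its period.

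Next I would invoke Theorem~\ref{thmAlgInd}: the family $\{\,\zetam(w):w\in\mathrm{Lyn}(X_{3,2}^{\times})\,\}$ generates the $\QQ$-algebra $\Ho$ of motivic multiple zeta values. Hence, for each tuple $(n_1,\ldots,n_r)$ with $n_r\geq 2$, there is a polynomial $P$ with coefficients in $\QQ$ — which, since $\Ho$ is graded, may be taken homogeneous of weight $n_1+\cdots+n_r$ — such that
$$\zetam(n_1,\ldots,n_r)\;=\;P\big((\zetam(w))_{w\in\mathrm{Lyn}(X_{3,2}^{\times})}\big)\qquad\hbox{in }\Ho\ .$$
Applying the $\QQ$-algebra homomorphism $\per$ to both sides, multiplicativity and $\QQ$-linearity transport this identity to
$$\zeta(n_1,\ldots,n_r)\;=\;P\big((\zeta(w))_{w\in\mathrm{Lyn}(X_{3,2}^{\times})}\big)\qquad\hbox{in }\CC\ .$$
Since every multiple zeta value is, by definition, of the form $\zeta(n_1,\ldots,n_r)$ with $n_r\geq 2$, this is exactly the assertion of the corollary.

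There is essentially no obstacle remaining at this stage: the entire difficulty is absorbed into Theorem~\ref{thmAlgInd}, and hence ultimately into Theorem~\ref{thmHoffMZVLi} together with Zagier's evaluation of $\zeta(2,\ldots,2,3,2,\ldots,2)$. The only facts that genuinely need to be in place are the ones recalled in the first paragraph: that $\Pe^{\mm}_{\MT(\ZZ)}$ carries a ring structure, that pairing with $\mathrm{comp}_{B,dR}$ defines a ring homomorphism to $\CC$, and that under it $\zetam(n_1,\ldots,n_r)\mapsto\zeta(n_1,\ldots,n_r)$ — all part of the Tannakian setup of \S\ref{sectTransperiods} and \S\ref{sectMotMZV}. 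Finally, I would record the more convenient variant noted after Theorem~\ref{thmAlgInd}: running the same argument with the generating family modified by replacing each $\zetam(3,2,\ldots,2)$ (a $3$ followed by $n-1$ twos) by $\zetam(2n+1)$ shows that every multiple zeta value is equally a polynomial over $\QQ$ in the family obtained from $(\ref{ZHL})$ by substituting the odd single zeta values $\zeta(2n+1)$ for the $\zeta(3,2,\ldots,2)$.
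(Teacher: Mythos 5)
Your argument is correct and is exactly the paper's: Theorem~\ref{thmAlgInd} says the elements $\zetam(w)$, $w\in\mathrm{Lyn}(X_{3,2}^{\times})$, generate the algebra $\Ho$ of motivic multiple zeta values, and the paper deduces the corollary simply by ``taking the period,'' i.e.\ applying the ring homomorphism $\per$ with $\per(\zetam(n_1,\ldots,n_r))=\zeta(n_1,\ldots,n_r)$, just as you do. Your additional remarks (homogeneity by weight, the variant replacing $\zetam(3,2,\ldots,2)$ by $\zetam(2n+1)$) are consistent with the surrounding text and introduce no gap.
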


\begin{coro} The category $\MT(\ZZ)$ is generated by $\pi^{mot}_1(\Pro^1\backslash \{0,1,\infty\},\tone_0,-\tone_1)$ in the following sense. Every mixed Tate motive over $\ZZ$ is isomorphic, up to  a  Tate twist, to a direct sum of copies of sub-quotients of  
$$\Or(\pi^{mot}_1(\Pro^1\backslash \{0,1,\infty\}, \tone_0, -\tone_1))\ .$$
\end{coro}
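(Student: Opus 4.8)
The plan is to exploit the fact, already established via Theorem~\ref{thmHoffMZVLi} and the dimension count $(\ref{dNdef})$, that the motivic multiple zeta values span all of $\Pe^{\mm,+}_{\MT(\ZZ),\RR}$, hence that $\Ho = \Pe^{\mm,+}_{\MT(\ZZ),\RR}$. The key observation is that the motivic periods appearing in $\Ho$ are, by their very definition as matrix coefficients $[\Or({}_x\Pi_y^{\mathrm{mot}}), w, {}_x1_y^B]^{\mm}$, periods of sub-quotients of the ind-object $\Or(\pi_1^{mot}(X,\tone_0,-\tone_1))$. So the abstract input is a Tannakian principle: if every motivic period of $\MT(\ZZ)$ (of non-negative weight, real) arises as a period of a sub-quotient of a fixed ind-object $N = \Or(\pi_1^{mot}(X))$, then every object of $\MT(\ZZ)$ is, up to Tate twist, built out of sub-quotients and direct sums of $N$.

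First I would make the Tannakian reduction precise. Let $\langle N\rangle^{\otimes}$ denote the Tannakian subcategory of $\MT(\ZZ)$ generated by $N$ (closed under sub-quotients, $\otimes$, duals, direct sums), and let $G_N$ be its Tannaka group, so that $G^{dR}$ surjects onto $G_N$. The de Rham periods $\Or(G^{dR})$ are spanned by matrix coefficients $[M,w,v]^{dR}$ for $M$ ranging over $\MT(\ZZ)$; the analogous statement holds for $\Or(G_N)$ with $M$ ranging over $\langle N\rangle^{\otimes}$. Now the crucial step: because $\Ho = \Pe^{\mm,+}_{\MT(\ZZ),\RR}$ and $\Ho$ is generated (as an algebra) by periods of sub-quotients of $N$, the coaction formula $(\ref{mainformula})$ shows that the de Rham side $\Or(U^{dR})$ is generated by matrix coefficients of sub-quotients of $N$ as well — indeed $(\ref{mainformula})$ writes $D$ applied to a motivic MZV in terms of $\Or(U^{dR})$-elements of the shape $[I^{\uu}(\cdots)]$, which are precisely de Rham matrix coefficients of sub-quotients of ${}_x\Pi_y$. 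Hence the map $U^{dR}\to G_{N,U}$ (unipotent part) is injective on functions, i.e. $U^{dR} = U^{dR}_N$. Combined with the fact that the Tate twist $\QQ(1) = H_1(\GG_m)$ is itself a sub-quotient of $\Or(\pi_1^{mot}(X))$ (it is $H^1_{dR}(X)$'s summand, cf. the de Rham description $H^1_{dR}(X)\cong \QQ(-1)\oplus\QQ(-1)$), the full group $G^{dR} = U^{dR}\rtimes\GG_m$ coincides with $G_N$. Therefore $\langle N\rangle^{\otimes} = \MT(\ZZ)$.

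Finally I would translate "$\langle N\rangle^{\otimes} = \MT(\ZZ)$" into the concrete statement of the corollary. An arbitrary object $M$ of $\MT(\ZZ)$ lies in the Tannakian category generated by $N$, so it is obtained from $N$ by finitely many operations $\otimes$, $\oplus$, dual, sub-quotient. Using the Tate twist freely (the corollary only claims the conclusion "up to a Tate twist"), one reduces tensor powers $N^{\otimes k}$ and duals to sub-quotients of $N$ itself after twisting: since $N = \Or(\pi_1^{mot})$ is already a Hopf algebra ind-object, $N\otimes N$ maps to $N$ via the product and receives $N$ via the coproduct, so iterated tensor powers do not leave the category of sub-quotients of $N$ up to twist; and the antipode identifies $N^\vee$ with a twist of $N$. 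Hence every $M$ is, up to a Tate twist, a direct sum of sub-quotients of $\Or(\pi_1^{mot}(X,\tone_0,-\tone_1))$, which is exactly the assertion.

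The main obstacle is the middle step: carefully justifying that $\Ho = \Pe^{\mm,+}_{\MT(\ZZ),\RR}$ forces the Tannaka group of $\langle N\rangle^{\otimes}$ to be all of $G^{dR}$, rather than merely a quotient through which the action on $\Ho$ factors. This is where one must use the structure of $U^{dR}$ as a free pro-unipotent group on generators $\sigma_3,\sigma_5,\dots$ (\S\ref{sectIntroUnip}(1)): faithfulness of the $U^{dR}$-action on $\Ho$ (which follows from Theorem~\ref{thmHoffMZVLi} together with $(\ref{dNdef})$, exactly as in the deduction of Theorem~\ref{thmfaithfulactionofGdr}) means $U^{dR}\hookrightarrow \mathrm{Aut}$ of something built from $N$, and one must check this embedding is compatible with the Tannakian structure so that it yields $U^{dR} = U^{dR}_N$ rather than just an abstract injection. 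Once faithfulness is in hand the rest is formal Tannakian bookkeeping.
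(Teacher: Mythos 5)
Your first step (getting from Theorem \ref{thmHoffMZVLi} and the dimension count to the statement that the Tannaka group of the subcategory generated by $N=\Or(\pi_1^{mot})$ is all of $G^{dR}$) is essentially correct, though more roundabout than necessary: faithfulness of the $G^{dR}$-action on $\omega_{dR}(N)=\Or(\opi)$ is exactly Theorem \ref{thmfaithfulactionofGdr}, which the paper has already deduced from the main theorem, and an injective homomorphism of affine group schemes in characteristic zero is a closed immersion, so the surjection $G^{dR}\twoheadrightarrow G_N$ is an isomorphism with no further compatibility to check. The worry you flag as ``the main obstacle'' is therefore not where the difficulty lies.

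The genuine gap is in your final paragraph, i.e.\ the passage from Tannakian generation to the precise statement of the corollary. Tannakian generation only tells you that every object is a subquotient of direct sums of objects $N^{\otimes a}\otimes (N^{\vee})^{\otimes b}$, and your proposed reduction of these to subquotients of $N$ itself does not work: the existence of the product $N\otimes N\to N$ and coproduct $N\to N\otimes N$ says that $N$ is a quotient (resp.\ sub) of $N\otimes N$, not the reverse, so it gives no control over $N^{\otimes k}$; and the antipode is an (anti-)automorphism of $N$, it does not identify $N^{\vee}$ (a pro-object, the completed enveloping algebra) with a Tate twist of $N$. The mechanism that actually yields the corollary is different and uses faithfulness once more: since $(\ref{UtoAu})$ is a closed immersion, the induced map $\Or(\opi)\to\Or(U^{dR})$ is surjective and suitably equivariant, so $\omega_{dR}(N)$ has the regular representation of $U^{dR}$ as a quotient; combined with the standard fact that every finite-dimensional representation of an affine group scheme embeds into finitely many copies of its regular representation, and with the decomposition $\Or(G^{dR})\cong\Or(U^{dR})\otimes\Or(\GG_m)$ in which the $\Or(\GG_m)$-factor accounts precisely for Tate twists, one concludes that every object of $\MT(\ZZ)$ is, up to a Tate twist, built from copies of subquotients of $\Or(\opi)$ itself --- no tensor powers or duals of $N$ ever enter. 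Without this regular-representation/torsor argument (which is where the isomorphism $(\ref{Auactson1})$ and the torsor structure of $\opi$ are really used), your proof establishes only the weaker statement that $N$ generates $\MT(\ZZ)$ as a Tannakian category, not the corollary as stated.
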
 

\begin{coro}
 The periods of mixed Tate motives over $\ZZ$ are polynomials  with rational coefficients of  $(2\pi i)^{-1}$ and  $(\ref{ZHL})$.
\end{coro}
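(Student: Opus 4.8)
\emph{Proof proposal.} The plan is to deduce this purely formally from results already in hand: the preceding corollary (every mixed Tate motive over $\ZZ$ is, after a Tate twist, a direct sum of copies of subquotients of $\Or(\pi_1^{mot}(X,\tone_0,-\tone_1))$), the lemma of \S\ref{Periodsofpi1} (the regularised iterated integrals $(\ref{genitint})$, which furnish the periods of that ind-object, are $\ZZ$-linear combinations of multiple zeta values), and the corollary to Theorem \ref{thmAlgInd} that every multiple zeta value is a $\QQ$-polynomial in the $\zeta(w)$ with $w\in\mathrm{Lyn}(X_{3,2}^\times)$.

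\textbf{Step 1: reduction to a single object.} First I would record that periods behave well under the operations occurring in the generation corollary. A period of a mixed Tate motive $M$ is a number $\per([M,w,\sigma]^{\mm})$ with $w\in\omega_{dR}(M)$, $\sigma\in\omega_B(M)^\vee$; if $M$ is a subobject of $N$ one extends $\sigma$ along the surjection $\omega_B(N)^\vee\to\omega_B(M)^\vee$, if $M$ is a quotient one lifts $w$ to $\omega_{dR}(N)$, and in either case compatibility of $\mathrm{comp}_{B,dR}$ with morphisms shows the period is again a period of $N$; periods of a direct sum are periods of the summands; and by Example \ref{exampleLef} the period of $\QQ(-1)=\Lef$ is $2\pi i$, so tensoring any motive with $\QQ(j)$, $j\in\ZZ$, multiplies all its periods by the integer power $(2\pi i)^{-j}$. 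Combining these facts with the generation corollary, every period of a mixed Tate motive over $\ZZ$ is a $\QQ$-linear combination of numbers $(2\pi i)^n\,p$ with $n\in\ZZ$ and $p$ a period of $\Or(\pi_1^{mot}(X,\tone_0,-\tone_1))$. By $(\ref{genitint})$, together with the groupoid structure and homotopy invariance, every such $p$ is a regularised iterated integral on $X$ from $0$ to $1$, hence by the lemma of \S\ref{Periodsofpi1} a $\ZZ$-linear combination of multiple zeta values, hence by the corollary to Theorem \ref{thmAlgInd} a $\QQ$-polynomial in the numbers $(\ref{ZHL})$.

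\textbf{Step 2: absorbing the powers of $2\pi i$.} It remains to see that each factor $(2\pi i)^n$, $n\in\ZZ$, is itself a $\QQ$-polynomial in $(2\pi i)^{-1}$ and the numbers $(\ref{ZHL})$. For $n<0$ it is the monomial $\big((2\pi i)^{-1}\big)^{-n}$. For $n\ge 0$ one uses $(2\pi i)^2=-24\,\zeta(2)$ and the fact that the one-letter word $2$ is a Lyndon word, so that $\zeta(2)$ is among the numbers $(\ref{ZHL})$: writing $n=2k$ or $n=2k+1$ gives $(2\pi i)^{2k}=(-24)^{k}\zeta(2)^{k}$ and $(2\pi i)^{2k+1}=(-24)^{k+1}\zeta(2)^{k+1}(2\pi i)^{-1}$. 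Multiplying out and summing the contributions from Step 1, every period of a mixed Tate motive over $\ZZ$ is a polynomial, with rational coefficients, in $(2\pi i)^{-1}$ and the $\zeta(w)$ with $w\in\mathrm{Lyn}(X_{3,2}^\times)$.

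\textbf{Where the real content lies.} There is no genuinely new difficulty: the substance has all been spent in Theorem \ref{thmAlgInd} and in the generation corollary, which we are granting. The one point that deserves care is the asymmetry between $2\pi i$ and $(2\pi i)^{-1}$: since $\MT(\ZZ)$ contains the negative Tate twists $\QQ(n)$ with $n<0$, one genuinely must invert $\Lef^{\mm}$ (equivalently, adjoin $(2\pi i)^{-1}$ rather than merely $2\pi i$), whereas all the \emph{positive} powers of $2\pi i$ are already polynomials in $\zeta(2)$ and so cost nothing extra. A secondary, purely bookkeeping point in Step 1 is to verify that the periods of the ind-object $\Or(\pi_1^{mot}(X,\tone_0,-\tone_1))$ really are spanned by the integrals $(\ref{genitint})$, i.e.\ that the matrix of $(\ref{comparisonIsom3})$ in the word basis of $\Or(\pi_1^{dR}(X))$ against the basis of $\Or(\pi_1^B(X,\tone_0,-\tone_1))^\vee$ given by homotopy classes of paths has entries among the regularised iterated integrals; this is immediate from the definition of $\mathrm{comp}_{B,dR}$ and homotopy invariance.
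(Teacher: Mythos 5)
Your proposal is correct and follows essentially the route the paper intends: it states this corollary without proof as an immediate consequence of the generation corollary, the lemma of \S\ref{Periodsofpi1}, and the corollary to Theorem \ref{thmAlgInd}, exactly the three ingredients you combine (with the Tate twists accounting for the single inverse factor $(2\pi i)^{-1}$, as in the quoted refinement from \cite{DLetter}). The only minor imprecision is in Step 1: pairing the word basis against Betti paths other than $\dch$ (loops at the tangential base points) produces extra factors that are powers of $2\pi i$, not merely $\ZZ$-linear combinations of multiple zeta values, but this is harmless since your Step 2 already absorbs all integer powers of $2\pi i$ via $(2\pi i)^2=-24\,\zeta(2)$ and $(2\pi i)^{-1}$.
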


More precisely \cite{DLetter}, if $M \in \MT(\ZZ)$ has non-negative  weights (i.e. $W_{-1}M=0$), then the periods of 
$M$  are polynomials   in $(\ref{ZHL})$ and $2 \pi i$.

\subsection{Canonical generators} \label{sectCanGen} Recall that the unipotent zeta values $\zeta^{\uu}$ are elements of 
$\Or(U^{dR})$.  As a consequence of theorem \ref{thmAlgInd}:

\begin{coro} For every $n\geq 1$ there is a canonical   element $\sigma_{2n+1}\in \mathrm{Lie} \, U^{dR}(\QQ)$  which is uniquely defined by  $\langle \exp(\sigma_{2n+1}), \zeta^{\uu}(2m+1) \rangle= \delta_{m,n} $, and 
\begin{align}
\langle \exp(\sigma_{2n+1}), \zeta^{\uu}(w) \rangle &  = 0 \qquad \hbox{ for all } w\in \mathrm{Lyn}(X_{3,2}) \hbox{ such that  } \deg_3 w>1 \  .\nonumber
\end{align}
\end{coro}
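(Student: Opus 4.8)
The plan is to deduce everything from Theorem~\ref{thmAlgInd}. By that theorem, the algebra $\Ho$ of motivic multiple zeta values is a free polynomial algebra on the generators $\zetam(w)$ for $w\in\mathrm{Lyn}(X_{3,2}^\times)$; equivalently, since $\Ho=\Pe^{\mm,+}_{\MT(\ZZ),\RR}$, the ``de Rham side'' $\Or(U^{dR})$ is the free polynomial algebra on the unipotent versions $\zeta^{\uu}(w)$, $w\in\mathrm{Lyn}(X_{3,2}^\times)$, subject only to $\zeta^{\uu}(2)=0$ (so that the generator $\zeta^{\uu}(3,2,\dots,2)$ of weight $2n+1$ may be swapped for $\zeta^{\uu}(2n+1)$ using Theorem~\ref{thmZagierthm}, as remarked after Theorem~\ref{thmAlgInd}). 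Thus $\Or(U^{dR})$ is a polynomial ring in the countable family $\{\zeta^{\uu}(2n+1)\}_{n\geq1}\cup\{\zeta^{\uu}(w): w\in\mathrm{Lyn}(X_{3,2}^\times),\ \deg_3 w>1\}$. In particular, in each weight the indecomposables $\mathcal{L}=\Or(U^{dR})_{>0}/(\Or(U^{dR})_{>0})^2$ form a finite-dimensional space with an explicit basis indexed by Lyndon words.

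Next I would use the fact that $\Or(U^{dR})$ is a (graded, connected) commutative Hopf algebra, so $U^{dR}(\QQ)=\mathrm{Spec}\,\Or(U^{dR})$ has Lie algebra $\mathrm{Lie}\,U^{dR}(\QQ)$ canonically dual to $\mathcal{L}$: an element of $\mathrm{Lie}\,U^{dR}(\QQ)$ of weight $2n+1$ is precisely a $\QQ$-linear functional on $\mathcal{L}_{2n+1}$, and $\exp$ sends it into $U^{dR}(\QQ)$ with the property that pairing $\langle\exp(\sigma),-\rangle$ against a polynomial in the $\zeta^{\uu}$-generators reads off, to first order, the value of $\sigma$ on the linear generators while vanishing on decomposables. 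Since $\{[\zeta^{\uu}(2m+1)]\}_{m\geq1}$ together with $\{[\zeta^{\uu}(w)]: w\in\mathrm{Lyn}(X_{3,2}),\ \deg_3w>1\}$ is a basis of $\mathcal{L}$ in each weight, there is a unique dual basis of $\mathrm{Lie}\,U^{dR}(\QQ)$; I define $\sigma_{2n+1}$ to be the element of this dual basis paired with $[\zeta^{\uu}(2n+1)]$. Unwinding the duality between $\exp$ on the Lie algebra and the coalgebra structure gives exactly $\langle\exp(\sigma_{2n+1}),\zeta^{\uu}(2m+1)\rangle=\delta_{m,n}$ and $\langle\exp(\sigma_{2n+1}),\zeta^{\uu}(w)\rangle=0$ for the remaining Lyndon generators with $\deg_3 w>1$, because on products the pairing with a primitive-looking $\exp(\sigma)$ is a derivation that kills anything decomposable, and $\zeta^{\uu}(w)$ for $\deg_3 w>1$ is, modulo decomposables, one of the chosen complementary basis vectors.

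Finally I would check well-definedness: $\sigma_{2n+1}$ as constructed is independent of auxiliary choices since the generating family in Theorem~\ref{thmAlgInd} is canonical (the Lyndon words and Zagier's relation are fixed), and it lands in $\mathrm{Lie}\,U^{dR}(\QQ)$ rather than merely its graded dual because $U^{dR}$ is pro-unipotent with finite-dimensional graded pieces, so $\mathrm{Lie}\,U^{dR}(\QQ)=\bigoplus_N(\mathcal{L}_N)^\vee$ on the nose. This also recovers the statement in \S\ref{sectIntroUnip}(1) that the $\sigma_{2n+1}$ generate $\mathrm{Lie}\,U^{dR}$ freely: the associated graded of $\mathrm{Lie}\,U^{dR}$ is free on a generator in each odd degree $\leq-3$, and the images of the $\sigma_{2n+1}$ hit these generators because their pairings with the weight-$(2n+1)$ indecomposable $[\zeta^{\uu}(2n+1)]$ are nonzero while lower-weight contributions are forced to be commutators. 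The main obstacle is purely book-keeping: making the duality between $\exp:\mathrm{Lie}\,U^{dR}\to U^{dR}$ and the Hopf-algebra structure on $\Or(U^{dR})$ precise enough that ``pairing with $\exp(\sigma)$ annihilates decomposables'' is rigorous in each weight, and confirming that swapping $\zeta^{\uu}(3,2,\dots,2)$ for $\zeta^{\uu}(2n+1)$ via Zagier's formula genuinely preserves the property of being a polynomial generating set — both of which are routine given Theorem~\ref{thmAlgInd} and Theorem~\ref{thmZagierthm}, but require care about gradings and the precise form of the change of variables.
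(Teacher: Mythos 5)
Your overall skeleton — use Theorem~\ref{thmAlgInd} (plus the footnote identifying the kernel of $\Ho\to\Or(U^{dR})$ with the ideal $(\zetam(2))$, and the swap of $\zeta^{\uu}(3,2,\ldots,2)$ for $\zeta^{\uu}(2n+1)$ via Theorem~\ref{thmZagierthm}) to conclude that $\Or(U^{dR})$ is a free polynomial algebra on $\{\zeta^{\uu}(2m+1)\}_{m\geq 1}\cup\{\zeta^{\uu}(w):w\in\mathrm{Lyn}(X_{3,2}),\ \deg_3 w>1\}$ — is exactly the input the corollary rests on. The gap is in the final step. You define $\sigma_{2n+1}$ as the dual basis element of the indecomposables $\mathcal{L}$ and then assert the pairing identities because ``on products the pairing with $\exp(\sigma)$ is a derivation that kills anything decomposable.'' That is not true: pairing with the group-like element $\exp(\sigma)$ is an algebra \emph{character} (it is multiplicative, e.g.\ $\langle \exp(\sigma),\zeta^{\uu}(3)^2\rangle=\langle\exp(\sigma),\zeta^{\uu}(3)\rangle^2$), not a derivation; it is $\sigma$ itself that kills decomposables. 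Moreover, even on the \emph{indecomposable} generators the identities you want do not follow from duality alone, because $\langle\exp(\sigma),f\rangle=\langle\sigma,f\rangle+\sum_{k\geq 2}\tfrac{1}{k!}\langle\sigma^{\otimes k},\Delta'^{(k-1)}f\rangle$, and for a homogeneous $\sigma$ of weight $2n+1$ the higher convolution terms can contribute whenever the weight of $f$ is $k(2n+1)$ (e.g.\ weight-$12$ Lyndon generators with $\deg_3 w\geq 2$ against $\sigma_3^{\otimes 4}$). A toy example showing the dual-basis element genuinely fails in general: take $\Or(U)=\QQ[x,w]$ with $x$ primitive and $\Delta w=w\otimes 1+1\otimes w+x\otimes x$; the functional $\tau$ dual to $[x]$ (so $\tau(x)=1$, $\tau(w)=0$, $\tau$ kills decomposables) satisfies $\langle\exp(\tau),w\rangle=\tfrac12\neq 0$. (Your claim is harmless only for the $\zeta^{\uu}(2m+1)$, which are primitive.) You also do not address uniqueness of an element satisfying the stated $\exp$-conditions, only uniqueness of the dual basis.

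The correct deduction — and the one the paper intends as an immediate consequence of Theorem~\ref{thmAlgInd} — avoids the Lie coalgebra entirely: since $\Or(U^{dR})$ is free polynomial on the generators above, a $\QQ$-point of $U^{dR}$ is precisely an algebra homomorphism $\Or(U^{dR})\to\QQ$, hence exists and is uniquely determined by arbitrary prescribed values on those generators. Let $u\in U^{dR}(\QQ)$ be the unique point with $\langle u,\zeta^{\uu}(2m+1)\rangle=\delta_{m,n}$ and $\langle u,\zeta^{\uu}(w)\rangle=0$ for $\deg_3 w>1$, and set $\sigma_{2n+1}:=\log u$, which is well defined and unique because $\exp:\mathrm{Lie}\,U^{dR}(\QQ)\to U^{dR}(\QQ)$ is a bijection for a pro-unipotent group. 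Note that this $\sigma_{2n+1}$ is in general \emph{not} your dual-basis element (they agree only up to correction terms coming from the higher convolution powers), so the canonicity should be phrased through the character conditions, as in the statement, rather than through duality with $\mathcal{L}$; with that replacement your appeal to Theorem~\ref{thmAlgInd} and Theorem~\ref{thmZagierthm} does the rest.
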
 
The elements $\sigma_{2n+1}$ can be taken as generators in \S\ref{sectIntroUnip} (1).  
It is perhaps surprising that one can define canonical elements of the motivic Galois group at all. These should perhaps be taken with a  pinch
of salt, since there may be other natural generators for the algebra of motivic multiple zeta values.

\begin{coro} 
  There is a unique    homomorphism $\tau: \Ho \rightarrow \QQ$ (see $(\ref{Hdef})$) such that: 
$$\langle \tau, \zetam(2) \rangle  = - \textstyle{1 \over 24} $$
and $\langle \tau, \zetam(w) \rangle   = 0$ for all $w\in \mathrm{Lyn}(X_{3,2})$ such that $ w \neq 2$.
\end{coro}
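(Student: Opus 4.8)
The plan is to deduce this directly from Theorem \ref{thmAlgInd}. That theorem says the algebra $\Ho$ of motivic multiple zeta values is a free polynomial algebra on the generators $\zetam(w)$ for $w\in \mathrm{Lyn}(X_{3,2}^\times)$. A $\QQ$-algebra homomorphism out of a free polynomial algebra is determined by, and may be specified arbitrarily by, its values on the polynomial generators. Hence, to define $\tau$ it suffices to prescribe $\langle \tau,\zetam(w)\rangle$ for each Lyndon word $w$ in $X_{3,2}=\{2,3\}$; uniqueness of a homomorphism with the stated properties is then immediate, since the conditions pin down $\tau$ on every generator. So first I would recall that $2$ is itself a Lyndon word in $X_{3,2}^\times$ (a length-one word is trivially Lyndon), and that the prescription ``$\langle\tau,\zetam(2)\rangle=-\tfrac1{24}$ and $\langle\tau,\zetam(w)\rangle=0$ for every other Lyndon $w$'' is exactly a choice of values on the free generators, hence extends uniquely to an algebra homomorphism $\tau:\Ho\to\QQ$.

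The only genuine point to check is consistency of the normalisation: the statement asks for $\langle\tau,\zetam(2)\rangle=-\tfrac1{24}$ while also (implicitly, through the remarks preceding Theorem \ref{thmZagierthm} and the relation $\zetam(2)=-(\Lef^{\mm})^2/24$) one wants $\tau$ to be compatible with the square of the Lefschetz period; but since $(\Lef^{\mm})^2$ is, up to the scalar $-24$, equal to the generator $\zetam(2)$, setting $\langle\tau,\zetam(2)\rangle=-\tfrac1{24}$ forces $\langle\tau,(\Lef^{\mm})^2\rangle=1$, which is the intended normalisation and presents no obstruction. One should also note that $\tau$ is automatically a graded map only after suitable conventions, but the corollary does not claim gradedness, so nothing further is needed here.

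The step I expect to be the ``main obstacle'' is almost entirely bookkeeping rather than mathematics: one must be careful that the generating family $\zetam(w)$, $w\in\mathrm{Lyn}(X_{3,2}^\times)$, really is algebraically free (which is precisely Theorem \ref{thmAlgInd}, so it is granted), and that every element of $\mathrm{Lyn}(X_{3,2}^\times)$ other than the single letter $2$ is assigned the value $0$ — in particular the letter $3$ is a Lyndon word and gets $\langle\tau,\zetam(3)\rangle=0$, and likewise $\zetam(w)$ for all longer Lyndon words. Thus $\tau$ vanishes on the ideal generated by those $\zetam(w)$ with $w\neq 2$ and is characterised by its value on $\zetam(2)$. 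In summary: invoke Theorem \ref{thmAlgInd} to identify $\Ho$ with a polynomial ring, observe that prescribing values on the generators defines a unique homomorphism, and check the single normalisation $-\tfrac1{24}$ is consistent — no deeper input is required.
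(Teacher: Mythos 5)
Your proposal is correct and is essentially the paper's own (implicit) argument: the corollary is stated as an immediate consequence of Theorem \ref{thmAlgInd}, using algebraic independence of the $\zetam(w)$, $w\in\mathrm{Lyn}(X_{3,2}^{\times})$, to get existence of $\tau$ by prescribing values on free polynomial generators, and generation of $\Ho$ by these elements to get uniqueness. No further input is needed, and your consistency remark about the normalisation $\zetam(2)=-(\Lef^{\mm})^{2}/24$ is harmless.
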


Applying this map to the motivic Drinfeld associator defines a canonical  (but not explicit!) rational associator:
$$\tau (\mathcal{Z}^{\mm}) \in  \opi(\QQ) = \QQ \langle \langle  x_0, x_1 \rangle \rangle $$

By acting on the canonical rational associator with elements $\sigma_{2n+1}$, one  deduces that there exists  a huge space of rational associators (which forms a torsor
over $G^{dR}(\QQ)$). Such associators have several applications (see,  for example \cite{Fu}).
\subsection{Transcendence conjectures}

\begin{conjecture} A variant of Grothendieck's period conjecture  states that 
$$\per : \Pe_{\MT(\ZZ)}^{\mm} \longrightarrow \CC$$
is injective. In particular, its restriction to $\Ho$ is injective also.
\end{conjecture}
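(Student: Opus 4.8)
The statement is a \emph{conjecture}: it is Grothendieck's period conjecture for the Tannakian category $\MT(\ZZ)$, and I do not expect an unconditional proof. What I can describe is how a proof would be organised and the precise point at which it must stop. The first step is the \emph{reduction to $\Ho$}. By the corollary that $\MT(\ZZ)$ is generated by $\pi^{mot}_1(X,\tone_0,-\tone_1)$, the ring $\Pe^{\mm}_{\MT(\ZZ)}$ is generated over $\Ho=\Pe^{\mm,+}_{\MT(\ZZ),\RR}$ by $\Lef^{\mm}$ and $(\Lef^{\mm})^{-1}$: each matrix coefficient of $\Or(\pi^{mot}_1)$ is one of the $I^{\mm}(x;w;y)\in\Ho$ (Theorems \ref{thmHoffMZVLi} and \ref{thmAlgInd}), and the only further generators of the category are the Tate objects $\QQ(n)$, $n\in\ZZ$, whose periods are powers of $2\pi i$. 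Since $(\Lef^{\mm})^{2}=-24\,\zetam(2)\in\Ho$, all periods of $\Ho$ lie in $\RR$, and $\per(\Lef^{\mm})=2\pi i$ is imaginary, a short direct argument shows that $\ker(\per)$ on $\Pe^{\mm}_{\MT(\ZZ)}$ vanishes as soon as $\ker(\per|_{\Ho})$ does. Thus the conjecture is equivalent to its stated restriction to $\Ho$.

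\emph{Reformulation of injectivity on $\Ho$.} Theorem \ref{thmAlgInd} identifies $\Ho$ with the \emph{free} polynomial $\QQ$-algebra on $\{\zetam(w):w\in\mathrm{Lyn}(X_{3,2})\}$; hence $\per|_{\Ho}$ is injective if and only if the corresponding \emph{real numbers} $\{\zeta(w):w\in\mathrm{Lyn}(X_{3,2})\}$ are algebraically independent over $\QQ$ — the transcendental counterpart of Theorem \ref{thmAlgInd}, matching weight by weight the dimension count $(\ref{dNdef})$. A proof would therefore consist of producing an arithmetic \emph{lower} bound for the transcendence degree of the $\QQ$-algebra generated by all multiple zeta values, equal to the \emph{upper} bound already supplied by $\MT(\ZZ)$; put differently, one must upgrade the faithfulness of the $G^{dR}$-action (Theorem \ref{thmfaithfulactionofGdr}) — i.e. the Zariski-density of the comparison point $\mathrm{comp}_{B,dR}$ in the period torsor $\Isom_{\MT(\ZZ)}(\omega_{dR},\omega_B)$ — to a statement about the \emph{numerical values} of the coordinates of that point.

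\emph{The obstacle.} This upgrade is the whole difficulty, and no technique for it exists. Unconditionally one knows only that $\zeta(2)=\pi^2/6$ is transcendental (Lindemann), that $\zeta(3)$ is irrational (Ap\'ery), and Rivoal--Zudilin-type results that at least one odd zeta value in a prescribed range is irrational; the algebraic independence of $\pi$ and $\zeta(3)$ — let alone of any two of $\zeta(3),\zeta(5),\zeta(7),\dots$ — is open, and even the irrationality of $\zeta(5)$ is unknown. A genuine proof would require a fundamentally new transcendence method; the natural hope is to feed the structure on the motivic side — the Ihara action and the coaction $(\ref{mainformula})$, the canonical generators $\sigma_{2n+1}$, the $p$-adic incarnations studied by Furusho and Yamashita — into an archimedean or $p$-adic argument, but at present there is no passage from motivic faithfulness to independence of the actual periods. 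In short: the Tannakian and combinatorial half of the problem is settled; the transcendental half \emph{is} Grothendieck's period conjecture, and the plan necessarily stops there.
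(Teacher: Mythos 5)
You have correctly recognised that this statement is a conjecture: the paper offers no proof, only the remark that its restriction to $\Ho$, combined with Theorem \ref{thmHoffMZVLi}, is equivalent to Hoffman's conjecture (and hence implies Zagier's dimension conjecture and the algebraic independence of $\pi,\zeta(3),\zeta(5),\dots$), and your description of the reduction to algebraic independence of the $\zeta(w)$, $w\in\mathrm{Lyn}(X_{3,2})$, and of the transcendental obstruction is consistent with that discussion. One caveat: the paper's ``in particular'' is only the trivial direction (the restriction of an injective map is injective), whereas you assert an equivalence; your converse, resting on $\Pe^{\mm}_{\MT(\ZZ)}$ being generated over $\Ho$ by $\Lef^{\mm}$ and $(\Lef^{\mm})^{-1}$ together with the real/imaginary splitting under the period map, is reasonable and can be supported by the result from Deligne's letter \cite{DLetter} quoted in \S\ref{sectMain}, but it is an addition you would need to justify rather than something the paper claims.
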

The last statement, together with theorem $\ref{thmHoffMZVLi}$, is equivalent to
\begin{conjecture} (Hoffman) The elements $\zeta(n_1,\ldots, n_r)$ for $n_i \in \{2,3\}$ are  a basis for the $\QQ$-vector space
spanned by multiple zeta values.
\end{conjecture}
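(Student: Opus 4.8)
The plan is to prove Hoffman's conjecture by splitting the \emph{basis} assertion into a spanning statement and a linear-independence statement, settling the former outright and isolating all the difficulty in the latter. For spanning I would simply invoke the corollary to theorem \ref{thmHoffMZVLi}: applying the period homomorphism $\per$ to the motivic linear independence, together with the counting of $(\ref{dNdef})$ and the Goncharov--Terasoma upper bound, already shows \emph{unconditionally} that every multiple zeta value of weight $N$ is a $\QQ$-linear combination of the real numbers $\zeta(n_1,\ldots,n_r)$ with $n_i\in\{2,3\}$. Since the number of such Hoffman words of weight $N$ is exactly the coefficient $d_N$ of $t^N$ in $(1-t^2-t^3)^{-1}$, the Hoffman family of weight $N$ is a spanning set of cardinality precisely $d_N$ for the $\QQ$-vector space $\mathcal{Z}_N$ of weight-$N$ multiple zeta values. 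The conjecture is therefore reduced to the one numerical assertion $\dim_\QQ \mathcal{Z}_N = d_N$, i.e.\ to linear independence over $\QQ$ of these $d_N$ real numbers.

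To prove the independence I would argue by lifting. Suppose $\sum_w c_w\,\zeta(w)=0$ were a nontrivial $\QQ$-linear relation among the weight-$N$ Hoffman reals; the goal is to promote it to the identical relation $\sum_w c_w\,\zetam(w)=0$ in $\Ho\subset \Pe^{\mm,+}_{\MT(\ZZ),\RR}$, whereupon theorem \ref{thmHoffMZVLi} forces every $c_w=0$. Here the motivic side is fully under control: by the coaction formula $(\ref{mainformula})$ and theorem \ref{thmprimitives} the linear independence of the $\zetam(w)$ is genuinely established, so the \emph{only} missing ingredient is that a $\QQ$-relation among the real periods is already a relation among the motivic periods, i.e.\ that $\per|_{\Ho}$ is injective. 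Thus the motivic machinery tells us exactly what the answer must be and pins down the target rank $d_N$; the task that remains is to realise this rank numerically.

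For that last input I envisage two routes. The unconditional route is transcendence-theoretic: for each weight $N$ one would construct an explicit family of $\QQ$-linear forms in the Hoffman numbers, built from regularised iterated (hypergeometric) integrals over $\dch$ in the spirit of the Apéry, Ball--Rivoal and Rhin--Viola constructions, and control their archimedean sizes and common denominators sharply enough to force the lower bound $\dim_\QQ\mathcal{Z}_N\ge d_N$, organising the estimates through the $G$-function theory of André and Beukers. The conditional route is Tannakian: injectivity of $\per$ on $\Pe^{\mm}_{\MT(\ZZ)}$ is a variant of Grothendieck's period conjecture, and would transfer the motivic independence verbatim. Either route, combined with the spanning statement of the first paragraph, delivers the basis property.

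The hard part is precisely this lower bound $\dim_\QQ\mathcal{Z}_N\ge d_N$ for the \emph{real} numbers, equivalently the injectivity of $\per|_{\Ho}$. No unconditional argument is known beyond small weights, and this is the single obstacle: the Tannakian route rests on the open period conjecture, while the transcendence route collides with the fundamental degeneration of $G$-function linear-independence estimates at the singular point $z=1$, which is exactly where multiple zeta values (regularised along $\dch$) are evaluated. Everything else --- spanning, and linear independence at the motivic level --- is already in hand; it is only the passage from motivic to numerical independence that leaves the conjecture open.
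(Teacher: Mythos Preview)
Your analysis matches the paper's own treatment: this statement is presented as a \emph{conjecture}, not a theorem, and the paper does not prove it but observes (just before stating it) that it is equivalent to theorem~\ref{thmHoffMZVLi} together with the injectivity of $\per$ on $\Ho$, i.e.\ a special case of Grothendieck's period conjecture. You reach exactly the same reduction --- unconditional spanning via the corollary to theorem~\ref{thmHoffMZVLi}, and linear independence contingent on $\per|_{\Ho}$ being injective --- and you correctly flag the latter as the single open obstruction. The speculative transcendence-theoretic ``unconditional route'' you sketch is not in the paper and, as you yourself note, is not known to succeed; so your proposal is an accurate account of why the conjecture remains open, not a proof of it.
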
 
This in turn implies a  conjecture due to Zagier,  stating that the dimension of the $\QQ$-vector space of multiple zeta
values of weight $N$ is equal to $d_N$  $(\ref{dNdef}),$ and furthermore that the ring of multiple zeta values is graded by the weight.
Specialising further, we obtain  the  following  folklore
\begin{conjecture} The numbers $\pi, \zeta(3),\zeta(5), \zeta(7), \ldots $  are algebraically independent.\end{conjecture}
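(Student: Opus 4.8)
The plan is to deduce this folklore conjecture, conditionally, from the variant of Grothendieck's period conjecture stated above, taking as further input Theorem~\ref{thmAlgInd} and Zagier's formula for $\zeta(2,\ldots,2,3,2,\ldots,2)$ (Theorem~\ref{thmZagierthm}, \cite{Zagier}). First I would display $\zetam(2),\zetam(3),\zetam(5),\zetam(7),\ldots$ as part of a system of free polynomial generators of the algebra $\Ho$ of motivic multiple zeta values. By Theorem~\ref{thmAlgInd} the elements $\zetam(w)$, $w\in\mathrm{Lyn}(X_{3,2}^{\times})$, are algebraically independent over $\QQ$ and generate $\Ho$; among these Lyndon words are $2$ and, for each $n\geq 1$, the word $3\,2^{n-1}$ (a three followed by $n-1$ twos), with $\zetam(3\,2^{n-1})=\zetam(3,2,\ldots,2)$ of weight $2n+1$. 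Zagier's identity expresses $\zetam(3\,2^{n-1})$ as a $\QQ$-linear combination of $\zetam(2n+1)$ and of products $\zetam(2)^{c}\,\zetam(2d+1)$ with $c\geq 1$ and $d<n$, in which $\zetam(2n+1)$ occurs with a nonzero rational coefficient. Inducting on $n$, this is an invertible, weight-homogeneous substitution, so replacing every $\zetam(3\,2^{n-1})$ by $\zetam(2n+1)$ yields a new system of free polynomial generators of $\Ho$ containing $\zetam(2)$ and all the $\zetam(2n+1)$. In particular the family $\zetam(2),\zetam(3),\zetam(5),\zetam(7),\ldots$ is algebraically independent over $\QQ$ inside $\Pe^{\mm}_{\MT(\ZZ)}$.

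Next I would transport this along the period map. Granting the period conjecture, $\per : \Pe^{\mm}_{\MT(\ZZ)}\to\CC$ is an injective homomorphism of $\QQ$-algebras, so it carries an algebraically independent set to an algebraically independent set. Since $\per(\zetam(n))=\zeta(n)$ for $n\geq 2$, and in particular $\per(\zetam(2))=\zeta(2)=\pi^{2}/6$, we conclude that $\pi^{2},\zeta(3),\zeta(5),\zeta(7),\ldots$ are algebraically independent over $\QQ$. Passing from $\pi^{2}$ to $\pi$ is then elementary: $\pi$ is algebraic of degree at most $2$ over $\QQ(\pi^{2})$, so for every $k\geq 1$ the field $\QQ(\pi,\zeta(3),\ldots,\zeta(2k+1))$ has the same transcendence degree over $\QQ$, namely $k+1$, as $\QQ(\pi^{2},\zeta(3),\ldots,\zeta(2k+1))$; being generated by $k+1$ elements and of transcendence degree $k+1$, these $k+1$ numbers are algebraically independent. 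As every finite subset of $\{\pi,\zeta(3),\zeta(5),\ldots\}$ is contained in such a family, the whole set is algebraically independent over $\QQ$.

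The main obstacle is, unsurprisingly, the period conjecture itself, which is entirely open: unconditionally one cannot presently show even that $\zeta(5)$ is irrational, and there is no known route to transcendence---let alone algebraic independence---statements about individual odd zeta values. Everything else in the argument is formal once Theorem~\ref{thmAlgInd} and Zagier's identity are available; in particular no relations among motivic multiple zeta values need to be checked by hand, as these are supplied by the Tannakian formalism together with Borel's computation of $K_{2n-1}(\ZZ)$. Accordingly, the statement is best regarded as a corollary of the period conjecture for $\MT(\ZZ)$, and it is that conjecture, rather than anything specific to $\Pro^{1}\backslash\{0,1,\infty\}$, which carries all of the difficulty.
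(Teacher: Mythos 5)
The statement you were asked about is a conjecture in the paper: no proof is given there, only the remark that it is obtained by ``specialising'' the preceding transcendence conjectures (the period conjecture for $\MT(\ZZ)$ together with Theorem \ref{thmHoffMZVLi}/Theorem \ref{thmAlgInd}). Your conditional derivation is exactly that intended route --- Theorem \ref{thmAlgInd} plus the paper's own remark that $\zetam(3,2,\ldots,2)$ may be traded for $\zetam(2n+1)$ via the motivic lift of Theorem \ref{thmZagierthm} gives algebraic independence of $\zetam(2),\zetam(3),\zetam(5),\ldots$ in $\Pe^{\mm}_{\MT(\ZZ)}$, injectivity of $\per$ under the period conjecture transports this to the real numbers, and the passage from $\pi^{2}$ to $\pi$ is elementary --- and you correctly identify that the entire difficulty resides in the period conjecture, so your write-up matches the paper's logic with nothing missing.
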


\subsection{Idea of proof of theorem $\ref{thmHoffMZVLi}$} The proof of linear independence is by induction on the number of $3$'s.  In the case where there are no 3's, one can easily  show  by adapting an argument due to Euler that
$$\zeta(\underbrace{2,\ldots, 2}_n) = {\pi^{2n}  \over (2n+1)!}\  . $$
The next interesting case is where there is one 3.
\begin{theorem} \label{thmZagierthm} (Zagier \cite{Zagier}).  Let $a,b\geq 0$. Then 
$$\zeta( \underbrace{2,\ldots 2}_a,3, \underbrace{2,\ldots, 2}_{b})=  2\,\sum_{r=1}^{a+b+1}(-1)^r (A^r_{a,b}-B^r_{a,b})\, \zeta(2r+1)\, \zeta(\underbrace{2,\ldots, 2}_{a+b+1-r})\,  $$
where, for any $a,b,r\in \NN$, $A^r_{a,b} =  \binom{2r}{2a+2}$, and  $B^r_{a,b} =\bigl(1-2^{-2r}\bigr)\binom{2r}{2b+1}$.
\end{theorem}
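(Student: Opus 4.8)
The plan is to prove the stronger \emph{motivic} statement --- the same identity with every $\zeta$ replaced by its motivic avatar $\zetam \in \Pe^{\mm,+}_{\MT(\ZZ),\RR}$ --- and then recover Zagier's formula by applying the period map $(\ref{permap})$. Two ingredients are already in hand: the motivic evaluation $\zetam(\underbrace{2,\ldots,2}_{n}) = \frac{(-1)^{n}}{2^{2n}(2n+1)!}\,(\Lef^{\mm})^{2n}$ (forced by $D\zetam(\underbrace{2,\ldots,2}_{n})=0$, since each factor is a power of $\Lef^{\mm}$ and $\zeta^{\uu}(2m)=0$, together with its period), and $(\ref{zetamprimitives})$, namely $D\zetam(2m+1)=[\zeta^{\uu}(2m+1)]\otimes 1$. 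Write $E_{a,b}$ for the difference of the two sides of the conjectured motivic identity. I would show $D E_{a,b}=0$ by induction on the weight $N=2a+2b+3$, and then argue $E_{a,b}=0$.

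The inductive step is a computation with the coaction formula $(\ref{mainformula})$. On the right-hand side, Leibniz together with $(\ref{zetamprimitives})$ gives
\[ D(\mathrm{RHS}) \;=\; 2\sum_{r=1}^{a+b+1}(-1)^{r}\bigl(A^{r}_{a,b}-B^{r}_{a,b}\bigr)\,[\zeta^{\uu}(2r+1)]\otimes \zetam(\underbrace{2,\ldots,2}_{a+b+1-r})\ . \]
On the left-hand side, apply $(\ref{mainformula})$ to the word $w=(10)^{a}100(10)^{b}$ representing $\zetam(\{2\}^{a},3,\{2\}^{b})$: each term is $[\,$a unipotent iterated integral on a contiguous sub-block$\,]\otimes[\,$the motivic iterated integral obtained by deleting that block$\,]$. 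Using $(\ref{Iwproperties})$, the vanishing $I^{\uu}(x;v;x)=0$, the symmetry $I^{\uu}(1;v^{*};0)=I^{\uu}(0;v;1)$, and the vanishing of $\zeta^{\uu}$ in even weight, most terms die or cancel in pairs; the surviving motivic factors are of lower weight and, after the shuffle and reversal relations $(\ref{Iwproperties})$, are $\QQ$-combinations of $\zetam(\{2\}^{\ast})$ and $\zetam(\{2\}^{\ast},3,\{2\}^{\ast})$, to which the inductive hypothesis and the $\zetam(\{2\}^{n})$ evaluation apply. One must then check that $D(\mathrm{LHS})$ collapses to exactly $D(\mathrm{RHS})$; this is precisely where the binomials $A^{r}_{a,b}=\binom{2r}{2a+2}$ and $B^{r}_{a,b}=\bigl(1-2^{-2r}\bigr)\binom{2r}{2b+1}$ are forced.

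To conclude, note that $\ker\bigl(D\colon \Pe^{\mm,+}_{\MT(\ZZ),\RR}\to\mathcal{L}\otimes\Pe^{\mm,+}_{\MT(\ZZ),\RR}\bigr)=\QQ[(\Lef^{\mm})^{2}]$: this follows from the freeness of $\mathrm{Lie}\,U^{\mathrm{dR}}$ recalled in \S\ref{sectIntroUnip}(1) (the same input as Theorem \ref{thmprimitives}), since $\Pe^{\mm,+}_{\MT(\ZZ),\RR}$ is, as an $\Or(U^{\mathrm{dR}})$-comodule, a cofree comodule tensored with $\QQ[(\Lef^{\mm})^{2}]$ on which $U^{\mathrm{dR}}$ acts trivially --- and no circularity with the main theorem arises, since one does \emph{not} need $\Ho=\Pe^{\mm,+}_{\MT(\ZZ),\RR}$. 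As $E_{a,b}$ has odd weight $N$, the only element of that kernel in weight $N$ is $0$: a nonzero rational multiple of $(\Lef^{\mm})^{N}$ cannot be real, because $\per\bigl((\Lef^{\mm})^{N}\bigr)=(2\pi i)^{N}\notin\RR$. Hence $E_{a,b}=0$, and $\per$ gives the claim. (Base cases: weight $3$ is $\zeta(3)=\zeta(3)$; weight $5$ is checked directly from Euler's evaluations.)

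The main obstacle is the combinatorial core of the inductive step: matching $D(\mathrm{LHS})$ with $D(\mathrm{RHS})$ term by term and recognising the exact binomial coefficients with their signs. In practice this is cleanest if one does not handle each $(a,b)$ separately but packages everything into the two-variable series $\Phi(x,y)=\sum_{a,b\ge 0}\zetam(\{2\}^{a},3,\{2\}^{b})\,x^{2a+2}y^{2b+1}$, proves a closed form for $\Phi$, and reads off coefficients; the factor $1-2^{-2r}$ and the even/odd split of the lower index then emerge from a partial-fraction expansion combined with the argument-doubling identity $\sin 2\pi z=2\sin\pi z\cos\pi z$. This is, in essence, Zagier's original strategy done analytically: starting from $(\ref{MZVasitint})$, summing over $a$ and $b$ converts the iterated integral into an Euler--Beta integral of a product of two $\tfrac{\sin\pi z}{\pi z}$-type kernels, which is evaluated by a classical ${}_3F_2(1)$ summation; comparison with the generating function of the right-hand side, built from $\sum_{n\ge 0}\zeta(\{2\}^{n})z^{2n}=\tfrac{\sin\pi z}{\pi z}$ and $\sum_{r\ge 1}\zeta(2r+1)z^{2r}=-\gamma-\tfrac12\bigl(\psi(1+z)+\psi(1-z)\bigr)$, then yields the formula. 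On that route the delicate points are the regularisation at the tangential base points and the hypergeometric evaluation; on the motivic route the delicate point is the same bookkeeping that isolates $A^{r}_{a,b}$ and $B^{r}_{a,b}$.
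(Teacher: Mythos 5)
Your primary (motivic) route has a genuine gap, and it sits exactly where the arithmetic content of the theorem lies. The coaction induction you describe can only control the terms of $(\ref{mainformula})$ whose right-hand factor has positive weight. The extreme term $(p,q)=(0,n)$ contributes $[\zeta^{\uu}(\{2\}^a,3,\{2\}^b)]\otimes 1$ to $D(\mathrm{LHS})$, while the $r=a+b+1$ term of your $D(\mathrm{RHS})$ is $2(-1)^{a+b+1}(A^{a+b+1}_{a,b}-B^{a+b+1}_{a,b})\,[\zeta^{\uu}(N)]\otimes 1$ with $N=2a+2b+3$. Matching these two is not bookkeeping: it is precisely the statement that, modulo products, $\zeta^{\uu}(\{2\}^a,3,\{2\}^b)$ equals the stated rational multiple of $\zeta^{\uu}(N)$ --- the de Rham image of the very identity you are proving, in the same weight $N$ --- and the inductive hypothesis only evaluates such brackets in weights $2r+1<N$. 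So what your induction can actually deliver is $D E_{a,b}=[E_{a,b}^{\uu}]\otimes 1$, whence theorem \ref{thmprimitives} gives $E_{a,b}\in\QQ\,\zetam(N)$ and nothing more. Your proposed fix addresses the wrong kernel: the residual ambiguity is a rational multiple of $\zetam(N)$ (real, odd weight, nonzero period), not of $(\Lef^{\mm})^{N}$, so no reality or parity argument removes it. To kill that last coefficient one needs new input at each weight: either Zagier's analytic evaluation itself (apply $\per$ to $E_{a,b}=c\,\zetam(N)$ and use $\zeta(N)\neq 0$) --- which makes your argument circular --- or the associator/double-shuffle relations satisfied by the motivic associator, from which Terasoma and Li derive the required relation. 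This is also how the paper organises the logic: theorem \ref{thmZagierthm} is quoted with Zagier's analytic proof, and the motivic lift is deduced from it afterwards via $(\ref{mainformula})$ and theorem \ref{thmprimitives}, i.e.\ in the opposite direction to your proposal.

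Your fallback paragraph (the generating series $\Phi$, an Euler--Beta/${}_{3}F_{2}(1)$ evaluation, comparison with the digamma generating function of odd zeta values) is essentially Zagier's own strategy, which is exactly the proof the paper cites. As written it is only a sketch: the closed evaluation of the two-variable generating function, the regularisation at the endpoints, and the matching of coefficients are the ``ingenious mixture of analytic techniques'' that constitute the actual proof, and none of them is carried out. So the analytic road is the right one if you want a self-contained argument, but it needs to be executed, not invoked; the motivic road, by contrast, cannot by itself yield this theorem.
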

Zagier's proof of this theorem involves  an ingenious mixture of analytic techniques. 
The next step in the proof of theorem $\ref{thmHoffMZVLi}$ is to lift Zagier's theorem to the level of motivic multiple zeta values by checking its compatibility  with the
coaction $(\ref{mainformula})$ and using theorem \ref{thmprimitives}.  Since then, the proof of theorem \ref{thmZagierthm} was simplified by Li \cite{Li}, and 
Terasoma  \cite{TerasomaBZ} has verified that it can be deduced from associator equations. Since the associator equations are known to hold
between motivic multiple zeta values, it follows that, in principle, this part of the proof can now be deduced
directly by elementary methods (i.e., without using theorem \ref{thmprimitives}).

From the motivic version of theorem \ref{thmZagierthm}, one can compute  the action of the abelianization of  $U^{dR}$  on the  vector space built out of the elements  $\zetam(n_1,\ldots, n_r)$, with $n_i =2,3$, graded by the number of $3$'s. This action can be expressed by certain matrices constructed out of the combinatorial formula $(\ref{mainformula})$, whose entries are linear combinations of the coefficients $A^r_{a,b}$ and $B^r_{a,b}$ of theorem \ref{thmZagierthm}.
The key point is that these matrices have non-zero determinant $2$-adically, and are hence invertible. At its heart,  this uses the fact that the $B^r_{a,b}$ terms 
in theorem \ref{thmZagierthm}  dominate  with respect to the $2$-adic norm due to the factor $2^{-2r}$.

\section{Roots of unity} \label{sectRoots}

There are a handful of  exceptional cases when one knows how to generate certain categories of mixed 
Tate motives over  cyclotomic fields and write down their periods. These results are due to Deligne \cite{DeRoots}, inspired by numerical computations due to Broadhurst 
in 1997 relating to computations of  Feynman integrals.

Let $N\geq 2$ and let $\mu_N$ be the group of $N^{\mathrm{th}}$ roots of unity, and consider 
\begin{equation}\label{P1minusmuN}
\Pro^1 \backslash \{0,\mu_N, \infty\}
\end{equation}
Fix a primitive $N^{\mathrm{th}}$ root $\zeta_N$.  One can consider the corresponding motivic fundamental groupoid (with respect to suitable tangential base points)
and ask whether it generates the category $\MT(\Or_N[\textstyle{1 \over N}])$, where $\Or_N$ is the ring of integers in the field $\QQ(\zeta_N)$.
Goncharov has shown that for many primes $N$, and in particular, for $N=5$, this is false: already in weight two, there are motivic periods of this category which cannot
be expressed as motivic iterated integrals on  $\Pro^1 \backslash \{0,\mu_N, \infty\}$.

In certain exceptional cases, Deligne has proven a stronger statement:
\begin{theorem} For $N=2,3,4,6$ (resp. $N=8$)  the motivic fundamental group
$$\pi_1^{mot}( \Pro^1\backslash \{0,1,\infty\}, \tone_0, \zeta_N) \qquad \big(\hbox{resp. }  \pi_1^{mot}( \Pro^1\backslash \{0,\pm 1,\infty\}, \tone_0, \zeta_8) \big)$$
generates the categories $\MT(\Or_N[\textstyle{1 \over N}])$  for   $N =2,3,4, 8$,  and 
$ \MT(\Or_N)$   for  $N = 6$.   \end{theorem}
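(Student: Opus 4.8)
The plan is to reduce the statement to a dimension count, exactly as in the $\MT(\ZZ)$ case (Theorem \ref{thmHoffMZVLi} and the surrounding discussion), but now over the cyclotomic base. First I would recall the structure of the relevant motivic Galois group: by the analogue of \S\ref{sectIntroUnip}(1), the category $\MT(\Or_N[\tfrac1N])$ is the category of representations of $U^{dR}_N \rtimes \GG_m$, where $\mathrm{Lie}\, U^{dR}_N$ is free on generators whose number in each degree is governed by the ranks of the relevant $K$-groups, equivalently by $\dim_{\QQ} \Or_N[\tfrac1N]^\times \otimes \QQ$ in degree $1$ and the Borel ranks in higher degrees. For $N=2,3,4$ (and $N=8$, on the appropriate space with $\pm 1$) one has a one-dimensional space of weight-one generators plus the expected higher generators; for $N=6$ the unit group has rank $1$ as well but $\Or_6 = \Or_6[\tfrac16]$ since $6$ is a unit times a power of the ramified prime, which is why $\MT(\Or_6)$ rather than $\MT(\Or_6[\tfrac16])$ appears. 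This gives a generating series for the dimensions $d_N^{(\ell)}$ of the weight-graded pieces of $\Pe^{\mm}_{\MT(\Or_N[1/N])}$.

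Next I would set up the motivic iterated integrals on $\Pro^1\backslash\{0,\mu_N,\infty\}$ (resp. $\Pro^1\backslash\{0,\pm1,\infty\}$ for $N=8$) with tangential base point $\tone_0$ and endpoint $\zeta_N$, obtaining a sub-Hopf-algebra $\Ho_N \subset \Pe^{\mm,+}_{\MT(\Or_N[1/N])}$ generated by the corresponding motivic multiple polylogarithm values (cyclotomic MZVs). The de Rham side is again a shuffle algebra on an explicit alphabet ($\{e_0\} \cup \{e_\zeta : \zeta \in \mu_N\}$), and Theorem \ref{thmpi1ismotivic} generalises to show these are periods of objects in $\MT(\Or_N[1/N])$. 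One then has the analogue of the coaction formula \eqref{mainformula}, derived from Ihara's action exactly as in \S\ref{sectIharaact}--\S\ref{sectDualformula}, the only change being that the local monodromy/inertia now occurs at each puncture in $\mu_N$. The faithfulness of the $G^{dR}$-action and the identification $\Ho_N = \Pe^{\mm,+}_{\MT(\Or_N[1/N]),\,?}$ will then both follow once one exhibits enough explicit cyclotomic motivic MZVs to saturate the dimension bound in every weight.

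The key step — and the main obstacle — is the construction of an explicit basis of motivic cyclotomic MZVs of the correct cardinality $d_N^{(\ell)}$ in each weight, together with a proof of their linear independence via the coaction. This is the exact analogue of the Hoffman-basis argument: one picks a combinatorially defined family (for $N=2$, for instance, the level-two analogue uses words in a restricted alphabet; Deligne--Goncharov and Deligne's paper \cite{DeRoots} identify the right families for each $N$), computes the matrix of the coaction $D$ restricted to the span of these elements graded by some auxiliary filtration (number of $e_1$'s, or an analogous depth-type statistic), and shows this matrix is invertible. As in \S\ref{sectMain}, the invertibility is proved by a $p$-adic determinant computation — here typically $2$-adic for $N=2,4,8$ and $3$-adic for $N=3,6$ — where the arithmetic input is an explicit closed formula for the relevant family of cyclotomic MZVs (the analogue of Zagier's Theorem \ref{thmZagierthm}), and one checks that one class of terms dominates $p$-adically. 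The delicate points are: (i) verifying that $\mu_N$-fundamental-group periods actually generate, i.e.\ that no period of $\MT(\Or_N[1/N])$ escapes — this is precisely where $N$ must be one of the exceptional values, since Goncharov's obstruction (cited in the excerpt for $N=5$) already kills $N=5$; and (ii) the bookkeeping distinguishing $\Or_N[1/N]$ from $\Or_N$, which must be matched to whether the unit $N$ forces extra ramification generators. I expect (i), packaged as the surjectivity/faithfulness half, to be the genuinely hard part, resolved by the coaction argument of \S\ref{sectDualformula} combined with the sharp dimension bound; the rest is the cyclotomic transcription of the machinery already developed for $\Pro^1\backslash\{0,1,\infty\}$.
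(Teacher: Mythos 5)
First, a point of orientation: the paper does not prove this statement at all --- it is Deligne's theorem, quoted from \cite{DeRoots}, so your proposal has to be measured against Deligne's actual argument and against the structural hints the survey gives. Measured that way, the heart of your plan has a genuine gap. You reduce everything to ``an explicit closed formula for the relevant family of cyclotomic MZVs (the analogue of Zagier's Theorem \ref{thmZagierthm})'' followed by a $p$-adic dominance/determinant computation. No such analytic input exists for these $N$, none is supplied, and none is needed: the reason $N=2,3,4,6,8$ are exceptional is precisely the structural fact recorded at the opening of \S\ref{sectDepth}, namely that for these levels the depth filtration is dual to the lower central series of the motivic Galois group (no depth defect, no interference from cusp forms). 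Deligne's proof exploits this: one shows that the images of the canonical generators already generate a free Lie algebra by looking at their depth-graded (in fact essentially depth-one) components, and the saturation of the $K$-theoretic dimension bound follows from this freeness; linear independence of the explicit families listed in \S\ref{sectRoots} comes out of the coaction acting through the depth-graded quotient, not out of an invertibility-of-matrices argument with a $2$-adic or $3$-adic estimate. By transplanting the $\MT(\ZZ)$ strategy of theorem \ref{thmHoffMZVLi} wholesale, you have deferred the entire content of the theorem to an unspecified arithmetic identity, i.e.\ the one step of your outline that cannot be carried out as written, while missing the mechanism (depth versus lower central series) that makes these levels tractable and $N=1$ hard.

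Two further inaccuracies would need repair even within your framework. (i) The theorem is stated for the torsor of paths on $\Pro^1\backslash\{0,1,\infty\}$ (resp.\ $\Pro^1\backslash\{0,\pm 1,\infty\}$ for $N=8$) from $\tone_0$ to the point $\zeta_N$; you instead set up iterated integrals on $\Pro^1\backslash\{0,\mu_N,\infty\}$, which is the weaker generation statement (the curve with more punctures has a larger fundamental groupoid), so your argument, even if completed, would not give the theorem as stated. (ii) Your bookkeeping at $N=6$ is wrong: in $\QQ(\zeta_6)$ neither $2$ (inert) nor $3$ (ramified) is a unit, so $\Or_6[\tfrac16]\neq\Or_6$, and the unit group of $\Or_6$ has rank $0$, not $1$; the appearance of $\MT(\Or_6)$ rather than $\MT(\Or_6[\tfrac16])$ reflects the absence of weight-one generators for the unramified category, not an identity of rings. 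Finally, your point (i) about Goncharov's obstruction for $N=5$ explains why the statement is \emph{restricted} to exceptional $N$, but it is not itself an ingredient of the proof of surjectivity for the listed $N$; what does the work there is the depth-graded freeness argument above.
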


Iterated integrals on  $(\ref{P1minusmuN})$ can be expressed in terms of   cyclotomic multiple zeta values\footnote{The conventions in \cite{DeRoots} are opposite to the ones used here} which are defined for $(n_r, \varepsilon_r) \neq (1,1)$ by the sum
$$\zeta(n_1,\ldots, n_r ; \varepsilon_1, \ldots, \varepsilon_r) = \sum_{0< k_1 < k_2 <\ldots <k_r} {\varepsilon_1^{k_1} \ldots \varepsilon_r^{k_r}
\over k_1^{n_1} \ldots k_r^{n_r}} $$
where $\varepsilon_{1},\ldots, \varepsilon_{r}$ are roots of unity. The weight is defined as the sum of the indices $n_1+
\ldots +n_r$ and the depth is the increasing filtration defined by the integer $r$.
 It is customary to use the  notation  
$$\zeta(n_1,\ldots, n_{r-1},   n_r \zeta_N) = \zeta(n_1,\ldots, n_r ; \underbrace{1,\ldots , 1}_{r-1} , \zeta_N)\ . $$
One can define motivic versions 
relative to the canonical fiber functor $\omega$ (\cite{DG}, \S1.1) playing the role of what was previously the de Rham fiber functor (the two are related by   $\omega_{dR} = \omega\otimes \QQ(\zeta_N)$), and  the Betti realisation functor which corresponds to the   embedding $ \QQ(\zeta_N) \subset \CC$. Denote these motivic periods  by a superscript $\mm$.
Recall that  $\Lef^{\mm}$ is the motivic Lefschetz period of example \ref{exampleLef},   whose period is $2 \pi i$.
Let $X_{odd} = \{1,3,5,\ldots \}$ with the ordering $1>3>5 \ldots $.
Rephrased in the language of motivic periods, Deligne's results for $N=2,3,4$ yield:
\begin{enumerate}

\item($N=2$; algebra generators).  The following set of motivic periods:
$$\{ \Lef^{\mm} \} \cup \{ \zetam(n_1,\ldots,  n_{r-1}, - n_{r})\hbox{ where } (n_r,\ldots, n_1) \in \mathrm{Lyn}(X_{odd})\} $$ 
are algebraically independent over $\QQ$. The monomials in these quantities  form a  basis for the ring of  geometric motivic periods\footnote{recall that this is the subring of all motivic periods of  the category $\MT(\ZZ[{1 \over 2}])$ which is  generated by motives $M$ which have non-negative weights, i.e., $W_{-1}M=0$.}  of $\MT(\ZZ[{1 \over 2}])$.

\item($N=3, 4$;  linear basis).  The set of motivic periods
$$\zeta^{\mm}(n_1,\ldots, n_{r-1},   n_r \zeta_N) (\Lef^{\mm})^p \qquad \hbox{ where } n_i \geq 1, p\geq 0$$
are linearly independent over $\QQ$. They form a basis for the space of geometric motivic  periods of  $\MT(\Or_N  [ \textstyle{1 \over N}])$, for $N=3,4$ respectively.

\end{enumerate} 

By applying the period map, each case gives  a statement about cyclotomic multiple zeta values. In the case $N=2$, 
the underlying field is still $\QQ$, and 
it follows from $(i)$ that every multiple zeta value at $2^{\mathrm{nd}}$ roots of unity (sometimes called an Euler sum) is a polynomial with rational coefficients in
$$(2 \pi i)^2  \quad \hbox{ and } \quad \zeta(n_1,\ldots,n_{r-1},  -n_{r})  \qquad  \hbox{ where } \quad (n_1,\ldots, n_r) \in \mathrm{Lyn}(X_{odd})\ .$$
This decomposition respects the weight and depth, where the depth of $(2 \pi i)^n$ is $1$. Thus an Euler sum of weight $N$ and depth $r$
can be expressed as a polynomial in the above elements, of total weight $N$ and total depth $\leq r$.

\section{Depth} \label{sectDepth}
The results of the previous section for $N=2,3,4,6,8$ crucially use the fact that the depth filtration is dual to the lower central series of the corresponding
motivic Galois group. A fundamental difference with the case $N=1$ is that this fact is false 
for $\Pro^1 \backslash \{0,1,\infty\}$, due to a defect   closely related to modular forms.

Recall that   $\opi$  is a group for the Ihara action $\circ$. Let $\mathrm{Lie } (\opi)$ denote its  Lie algebra. Its bracket is denoted by $\{\  , \  \}$. 
Denote the images of the canonical generators \S\ref{sectCanGen}  by  $\sigma_{2n+1} \in \mathrm{Lie } (\opi)(\QQ)$, for $n\geq 1$.
They are elements of the free graded Lie algebra on two generators $x_0, x_1$, and we have, for example
$$\sigma_3 = [x_0,[x_0,x_1]] +  [x_1,[x_1,x_0]] $$   
The higher $\sigma_{2n+1}$ are of the form
$\sigma_{2n+1} = \mathrm{ad}(x_0)^{2n} (x_1)$  plus  terms of degree $\geq 2$ in $x_1$, 
but are not known explicitly except for small $n$.
By theorem \ref{thmfaithfulactionofGdr}, the $\sigma_{2n+1}$ freely generate a graded Lie subalgebra of $\mathrm{Lie } (\opi)(\QQ)$  which we denote by $\gm$. 
The depth filtration $\dd$ on $\gm$ is the decreasing filtration given by the degree in the letter $x_1$.
In 1993,  Ihara and Takao observed that 
\begin{equation} \label{IharaTakao}
\{\sigma_3, \sigma_9\} - 3  \{\sigma_5, \sigma_7\}  = {691 \over 144} \,   e_\Delta \end{equation}
where $e_\Delta$ is an element with integer coefficients of depth $\geq 4$ (degree $\geq 4$ in $x_1$), and the coefficient  $691$ on the right-hand side is  the numerator of the  Bernoulli number $B_{12}$. The element $e_\Delta$ is sparse:\footnote{`most' of its coefficients are zero, see \cite{BrDepth}, \S8 for a closed formula for this element.} indeed,  computations in the early days gave the impression that the right-hand side is zero, although we now know
that the $\sigma_{2n+1}$ generate a free Lie algebra.

Relations such as  $(\ref{IharaTakao})$ show that the structure of $\gm$ is related to arithmetic, but more importantly  show
that  the associated depth-graded  Lie algebra $\gr_{\dd}\,  \gm$ is not free, since the left-hand side of $(\ref{IharaTakao})$ vanishes in $\gr^2_{\dd}\,  \gm$. The depth filtration on $\gm$ corresponds, dually, to the depth filtration on motivic multiple zeta values, and $(\ref{IharaTakao})$ implies 
that motivic multiple zeta values  of depth $\leq 2$ are insufficient to span
the space of all (real geometric) motivic periods of $\MT(\ZZ)$ in weight 12 (one needs to include elements of depth $\geq 4$ such as
$\zetam(2,2,2,3,3)$ in a basis). 
By counting dimensions, this can be interpreted  as   a relation, viz:
\begin{equation}\label{exoticrel}
 28 \, \zetam (3,9)+  150 \, \zetam(5,7) + 168  \, \zetam(7,5) = {5197\over 691} \zetam(12)  \ Ê.
\end{equation}
The corresponding relation for multiple zeta values was found in \cite{GKZ} and generalised to an infinite family corresponding to cuspidal cohomology classes of $\mathrm{SL}_2(\ZZ)$.  In particular, the family of motivic multiple zeta values
$$\zetam(2n_1+1,\ldots, 2n_r+1) \zetam(2k)$$
cannot be a basis for $\Ho$, although it has the right dimensions in each weight $(\ref{dNdef})$.
The Hoffman basis $(\ref{HoffmotMZVs})$ gets around such pathologies, since, for example,  its elements  in weight 12  have depths between four and six.

In 1997, Broadhurst and Kreimer made exhaustive numerical computations on the depth filtration of multiple zeta values, which led them to the 
following conjecture, translated into the language of motivic multiple zeta values.

\begin{conjecture} \label{BKconj} (Motivic version of the Broadhurst-Kreimer conjecture) Let $\dd$ denote the increasing filtration on $\Ho$ induced by the depth.  Then
\begin{equation} \label{BKconj}
\sum_{N, d\geq 0} \dim_{\QQ}\,  (gr^{\dd}_d \Ho_{N})\,  s^d t^N = { 1  + \mathbb{E}(t) s \over 1- \mathbb{O}(t) s + \mathbb{S}(t) s^2 - \mathbb{S}(t) s^4}\ ,
\end{equation}
where $\mathbb{E}(t) =\textstyle {t^2 \over 1-t^2}$,   $ \mathbb{O}(t) =\textstyle{t^3 \over 1-t^2} $, and $ \mathbb{S}(t) =\textstyle{t^{12} \over (1-t^4)(1-t^6)} .$
\end{conjecture}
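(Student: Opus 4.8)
Since this last statement is a conjecture, a complete proof is beyond what present techniques allow; what follows is the strategy I would pursue, the pieces that can plausibly be established, and the essential obstruction. The first move is to transport the problem from the commutative algebra $\Ho$ to the motivic Lie algebra. By Theorem~\ref{thmAlgInd} and its corollaries, $\Ho$ is, non-canonically, the tensor product of $\QQ[\zetam(2)]$ with the graded dual of $U(\gm)$, where $\gm$ is the free graded Lie algebra on $\sigma_3,\sigma_5,\dots$. The depth filtration $\dd$ on $\Ho$ is induced by the degree in $x_1$ on $\gm$, together with the fact that $\zetam(2)^k$ is a rational multiple of $\zetam(2k)$ and hence lies in depth $1$ for all $k\geq 1$. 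A routine bookkeeping then reduces $(\ref{BKconj})$ to a single assertion: the bigraded dimensions of the \emph{depth-graded} Lie algebra $\gr_\dd\gm$ are encoded by $1/(1-\mathbb{O}(t)s+\mathbb{S}(t)s^2-\mathbb{S}(t)s^4)$, with the numerator $1+\mathbb{E}(t)s$ accounting precisely for $\QQ[\zetam(2)]$ (whose non-unit part sits entirely in depth $1$). Concretely one must show that $\gr_\dd\gm$ has generators with generating series $\mathbb{O}(t)s=\tfrac{t^3 s}{1-t^2}$ (one $\sigma_{2n+1}$ in each odd weight), relations with series $\mathbb{S}(t)s^2$ in depth $2$, a syzygy correction $-\mathbb{S}(t)s^4$ in depth $4$, and nothing else.

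Next I would isolate the accessible cases. The depth $\leq 1$ part is immediate: $\gr^1_\dd\gm$ is spanned by the classes of the $\sigma_{2n+1}$, and a global consistency check is that setting $s=1$ on the right-hand side of $(\ref{BKconj})$ returns the Hilbert series $\tfrac{1}{1-t^2-t^3}$ of $\Ho$ from $(\ref{dNdef})$. For depth $2$ the tool is the linearized coaction extracted from $(\ref{mainformula})$: on the depth-graded side the infinitesimal coaction becomes a sum $\sum_{r\geq 1}D_{2r+1}$ of explicit operators, and a study of its kernel in depth $2$ should identify $\gr^2_\dd\gm$, dually, with a cohomology group built from period polynomials of cusp forms for $\mathrm{SL}_2(\ZZ)$; the relations that appear are exactly the Ihara--Takao relations $(\ref{IharaTakao})$, so that the expected count $\dim S_w(\mathrm{SL}_2(\ZZ))$ matches the $\mathbb{S}(t)s^2$ term. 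Pushing this through rigorously should at least yield the \emph{upper} bound on $\dim_\QQ\gr^\dd_d\Ho_N$, by iterating $(\ref{mainformula})$, and should pin down the depth $2$ part \emph{conditionally} on the linear independence of the surviving double zeta values.

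The hard part is everything past depth $2$. Two obstacles stand out. First, even in depth $2$, proving that the period-polynomial relations are the \emph{only} relations in $\gr^2_\dd\gm$ is equivalent to an open conjecture about double zeta values in the spirit of \cite{GKZ}, for which only one inequality is currently known. Second, and more seriously, there is no motivic mechanism that controls depth $\geq 3$: the very presence of the term $-\mathbb{S}(t)s^4$ shows that $\gr_\dd\gm$ is \emph{not} Koszul, so no naive minimal-resolution argument can force the dimensions down to the predicted values, and one would need a genuinely new input — for instance a complete presentation of the depth-graded Ihara Lie algebra, or an exact sequence relating its higher homology to modular forms that terminates at depth $4$. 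This is where I expect the real difficulty to lie; absent such an input, the realistic outcome of this programme is the full upper bound in all depths together with the exact formula in depths $\leq 2$.
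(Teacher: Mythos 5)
The statement you were asked about is a conjecture: the paper offers no proof of \eqref{BKconj}, only the surrounding discussion in \S\ref{sectDepth}, so your decision to present a programme rather than a purported proof is the correct stance, and your reduction (pass to the depth-graded Lie algebra $\gr_{\dd}\gm$, read the numerator $1+\mathbb{E}(t)s$ as the contribution of $\QQ[\zetam(2)]$ in depth $1$, read the denominator as generators $\mathbb{O}(t)s$, period-polynomial relations $\mathbb{S}(t)s^2$ tied to the Ihara--Takao relations \eqref{IharaTakao}, and a higher-syzygy term $\mathbb{S}(t)s^4$) is exactly the interpretation the paper attributes to \cite{BrDepth}. Two points of comparison with the paper's own account. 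First, you are slightly too pessimistic about what is known: the paper records that, by work of Zagier and Goncharov, formula \eqref{BKconj} is confirmed in depths $2$ \emph{and} $3$ (i.e.\ modulo $s^4$), not only in depth $\leq 2$; your claim that the depth-$2$ case is still conditional on a \cite{GKZ}-type independence statement for double zeta values understates the state of the art as the paper reports it. Second, the ``genuinely new input'' you ask for in depth $\geq 3$ is precisely what the paper points to: \cite{BrDepth} gives a complete conjectural presentation of $\gr_{\dd}\gm$ by generators and relations in terms of modular forms for $\mathrm{SL}_2(\ZZ)$, and \S\ref{sectMMV} proposes multiple modular values as the geometric framework intended to place multiple zeta values and modular forms on an equal footing and thereby explain the depth defect (cf.\ example \ref{examplewt12}); both remain conjectural, so your overall assessment of the obstruction is accurate. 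One small caution: your ``routine bookkeeping'' step, which passes from a (non-canonical) tensor decomposition of $\Ho$ to a product formula for depth-graded Hilbert series, needs the depth filtration to be compatible with that decomposition, and this is itself part of what is conjectural; it deserves to be flagged rather than assumed.
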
 

Note that   equation $(\ref{BKconj})$ specializes to $(\ref{dNdef})$ on setting  $s$ equal to  $1$.
The   series  $\mathbb{E}(t)$ and  $\mathbb{O}(t)$ are  the generating series  for the dimensions of the spaces of even and odd single motivic zeta values.  The interpretation of $\mathbb{S}(t)$ as the generating series for cusp forms for $\mathrm{SL}_2(\ZZ)$ suggests a deeper connection with modular forms which is well understood in depth two.  By work of Zagier, and Goncharov,  formula $(\ref{BKconj})$ has been confirmed in depths $2$ and $3$ (i.e., modulo $s^4$).
 
 An interpretation for conjecture $(\ref{BKconj})$ in terms of the structure of $\gr_{\dd} \gm$, as well as a complete conjectural description 
  of  generators and relations of $\gr_{\dd} \gm$ in terms of modular forms for $\mathrm{SL}_2(\ZZ)$ was given in \cite{BrDepth}.
A deeper geometric understanding of this conjecture  would seem to require a 
framework which places multiple zeta values and modular forms on an equal footing, which is the topic of \S\ref{sectMMV}.

\section{Multiple modular values} \label{sectMMV}
In this final paragraph, I want  suggest applying the philosophy of \S\ref{sectTransperiods} to  iterated integrals on (orbifold) quotients of the upper half plane
$$\HH = \{ \tau \in \CC  : \mathrm{Im\, } (\tau) > 0 \}$$
by finite index subgroups $\Gamma \leq \mathrm{SL}_2(\ZZ)$. Iterated integrals of modular forms were first studied by Manin \cite{Ma1, Ma2}.
Here, I shall only consider the case $\Gamma = \mathrm{SL}_2(\ZZ)$.

\subsection{Eichler-Shimura integrals}
Denote the space of homogenous   polynomials of degree $n\geq 0$ with rational coefficients by 
$$V_n = \bigoplus_{i+j =n}   \QQ X^i Y^j $$
It admits a right action of $\Gamma$ via the  formula $(X,Y)|_{\gamma} = (aX+bY, cX+dY)$,
where $\gamma = \left( \begin{smallmatrix} a&b\\ c&d \end{smallmatrix} \right)$.
 Let $f(\tau)$ be a modular form of weight $k$ for $\Gamma$. Define
$$\underline{f}(\tau) = (2 \pi i)^{k-1} f(\tau) (X - \tau Y)^{k-2} d\tau \qquad \in \qquad \Gamma(\HH, \Omega^1_{\HH} \otimes V_{k-2}) $$
It  satisfies the  invariance property
 $ \underline{f}(\gamma(\tau))\big|_{\gamma} = \underline{f}(\tau)$ for all $\gamma \in \Gamma$.
 For $f$ a cusp form,   the classical Eichler-Shimura integral (see, e.g., \cite{KZ}) is
\begin{equation}\label{ESintegral}
 \int_0^{\infty} \underline{f}(\tau)  =  \sum_{n=1}^{k-1}  c_n L(f,n) X^{k-n-1} Y^{n-1} 
 \end{equation}
 where $c_n$ are certain explicit constants (rational multiples of a power of $\pi$)
 and $L(f,s)$ is the analytic continuation of the $L$-function $L(f,s) = \sum_{n\geq 1} {a_n \over n^s}$ of $f$,
 where $f(\tau) = \sum_{n\geq 1} a_n q^n$ and $q=e^{2 \pi i \tau}$.
 Manin showed that if $f$ is a Hecke eigenform, there  exist   $\omega^{+}_f, \omega^{-}_f \in \RR$
such that
 $$ \int_0^{\infty} \underline{f}(\tau)  = \omega^+_f P_f^+(X,Y) + \omega^-_f P_f^-(X,Y) $$
where   $P_f^{\pm}(X,Y) \in V_{k-2}\otimes \overline{\QQ}$ are  polynomials  with algebraic coefficients
which are invariant (resp. anti-invariant) with respect to $(X,Y) \mapsto (-X,Y)$.
 
 Recall that the Eisenstein series of weight $2k$, for $k\geq 2$, is defined by
\begin{equation}\nonumber
e_{2k} (q) = - {B_{2k} \over 4k} + \sum_{ n \geq 1} \sigma_{2k-1}(n) q^n \ , \qquad q=e^{2 \pi i \tau}
\end{equation}
where $B_{2k}$ is the $2k^{\mathrm{th}}$ Bernoulli number, and $\sigma$ denotes the divisor function. The corresponding integrals for Eisenstein series diverges. 
Zagier showed how to extend the definition of the Eichler-Shimura integrals to the case  $e_{2k}$, giving \cite{KZ}

\begin{equation}
  \quad { (2k-2)!  \over 2} \zeta(2k-1)  (Y^{2k-2} - X^{2k-2})   - {  (2\pi i)^{2k-1} \over  4k (2k-1)} \sum_{a+b=2k, a, b\geq 1} \binom{2k}{a} B_a B_b X^{a-1} Y^{b-1}  \label{Zagint}
 \end{equation}

 Manipulating this formula  leads to expressions for the odd Riemann zeta values
 in terms of Lambert series similar to  the following formula due to Ramanujan:
 $$\zeta(3) = {7 \over 180} \pi^3 - 2 \sum_{n \geq 1} {1 \over n^3 (e^{2n  \pi } -1)}\ .$$
It converges very rapidy.  One wants to think  of  $(\ref{Zagint})$ as pointing towards  a modular construction of $\zetam(2k-1)$.
 
\subsection{Regularisation} The theory of tangential base points (\cite{DeP1}, \S15) gives a general procedure for regularising iterated integrals on curves.
If one applies this to the orbifold $\Gamma \bq \HH $,  where $\Gamma = \mathrm{SL}_2(\ZZ)$,
one can show that it yields the  completely explicit formulae below,  which generalise  Zagier's formula for a single Eisenstein series. I shall only state the final answer.
Via the map
$$\tau \mapsto q= \exp(2 i \pi \tau) : \HH \longrightarrow   \{q \in \CC: 0< |q| < 1 \}=D^{\times}$$
a natural choice of tangential base point (denoted $\tone_{\infty}$) corresponds to the  tangent vector $1$ at $q=0$. 
Since in this case we have  explicit models  $\HH\subset \CC$ for a universal covering space of $\Gamma \bq \HH$, and 
$\CC$ for the universal covering of $D^{\times}$,  one can   compute all regularised  iterated integrals  by pulling them back to  $\CC$ as follows.

First, if $f= \sum_{n\geq 0 } f_n q^n$ is the Fourier expansion of $f$,  write
\begin{equation} \label{finfinity}
\underline{f}^{\infty}(\tau) = (2  \pi i )^{k-1} f_0 (X- \tau Y) ^{k-2}  d \tau \qquad \in \qquad \Gamma(\CC, \Omega^1_{\CC} \otimes V_{k-2})
\end{equation}
Define a linear  operator  $R$ on the tensor coalgebra on $ \Gamma(\CC, \Omega^1_{\CC} \otimes V)$ by
\begin{align} 
R [  \omega_1 | \ldots | \omega_n] &  = \sum_{i=0}^n (-1)^{n-i} [\omega_1 | \ldots | \omega_i] \sha [\omega_n^{\infty} | \ldots | \omega_{i+1}^{\infty}]  \nonumber \\
& = \sum_{i=1}^n (-1)^{n-i}\Big[ [\omega_1 | \ldots | \omega_{i-1}] \sha [\omega_n^{\infty} | \ldots | \omega_{i+1}^{\infty}] \Big| \omega_i- \omega_i^{\infty}\Big]    \ .\nonumber 
\end{align}
where $V= \bigoplus_k V_k$ and $\omega^{\infty}$ is the `residue at infinity' of $\omega$ defined by $(\ref{finfinity})$.    
The  regularised iterated integral  can be expressed as \emph{finite} integrals
$$\int_{\tau}^{\tone_{\infty}} [ \underline{\omega}_1 | \ldots | \underline{\omega}_n ] = \sum_{i=0}^n \int_{\tau}^{\infty} R  [ \underline{\omega}_1 | \ldots | \underline{\omega}_i] \int_{\tau}^{0}  [  \underline{\omega}^{\infty}_{i+1} | \ldots | \underline{\omega}_{n}^{\infty} ]$$
It takes values in $V_{k_1-2} \otimes \ldots \otimes V_{k_n-2}\otimes \CC$ if $\omega_1, \ldots, \omega_n$ are of weights $k_1,\ldots k_n$, 
and hence admits a right action of $\Gamma$. 
The integrals in the right factor on the right-hand side are  simply  polynomials in $\tau$ and can be computed explicitly.

\subsection{Cocycles} Choose a basis of Hecke normalised eigenforms $f_i$ indexed by  non-commuting symbols $A_i$, and  form the  generating series
$$I(\tau; \infty) = \sum_{i_k, n\geq 0} A_{i_1} \ldots A_{i_n} \int_{\tau}^{\tone_{\infty}} [ \underline{\omega}_{i_1} | \ldots | \underline{\omega}_{i_n} ] $$
For every $\gamma \in \Gamma$, there exists a formal power series $C_{\gamma}$ in the $A_i$ such that
\begin{equation}\label{Cdef}
I(\tau; \infty)  =  I(\gamma(\tau);\infty)|_{\gamma}\,  C_{\gamma}   
 \end{equation}
which does not depend on $\tau$.  It satisfies the cocycle relation
$$ C_{gh} = C_g\big|_h \, C_h \quad \hbox{ for all } g,h \in \Gamma\ .$$
The part of the cocycle $C$ which involves iterated integrals of cusp forms  was previously considered by Manin \cite{Ma1,Ma2}.
Since the group $\Gamma$ is generated by 
$$
S= 
\left(
\begin{array}{cc}
  0   & -1  \\
   1  &   0 
\end{array}
\right)\quad, \quad  T= \left(
\begin{array}{cc}
  1   &  1  \\
   0  &   1 
\end{array}
\right)\ , 
\ 
$$
the cocycle  $C$ is  determined by $C_S$ and $C_T$.  The series $C_T$ can be computed explicitly 
and its coefficients lie in  $\QQ[2\pi i]$. 
\begin{defn} Define the ring of multiple modular values with respect to the group $\Gamma = \mathrm{SL}_2(\ZZ)$ to be the subring of $\CC$  generated by the coefficients of $C_S$.
\end{defn}

The series $C_S$ is a kind of analogue of Drinfeld's associator $\mathcal{Z}$. Its terms of degree 1 in the $A_i$
are precisely the   Eichler-Shimura integrals $(\ref{ESintegral})$ and  $(\ref{Zagint})$.
Setting $\tau=i$ in $(\ref{Cdef})$ gives  integrals which converge extremely fast and are very well suited to  numerical computation.

\subsection{Galois action} One can mimic the Betti-de Rham aspects of the theory of the motivic
fundamental group of $\Pro^1 \backslash\{0,1,\infty\}$ as follows: 
\begin{enumerate}
\item The coefficients of $C_S$ can be interpreted as certain periods of the relative unipotent completion of $\Gamma$. This was defined  by Deligne as follows. Let $k$ be a field of characteristic $0$ and $S$ a reductive  algebraic group over $k$.
Suppose that $\Gamma$ is a discrete group equipped with  a Zariski dense homomorphism
$\rho: \Gamma \rightarrow S(k).$
The completion of $\Gamma$ relative to $\rho$ is an affine algebraic group scheme $ \mathcal{G}_{\Gamma}$, 
 which sits in an exact sequence
$$ 1 \longrightarrow \mathcal{U}_{\Gamma} \longrightarrow \mathcal{G}_{\Gamma} \longrightarrow S \longrightarrow 1$$
where $\mathcal{U}_{\Gamma}$ is pro-unipotent. 
There is a natural map $\Gamma \rightarrow \mathcal{G}_{\Gamma}(k)$ which is Zariski dense, and  whose projection onto $S(k)$ is the map $\rho$.

\item   In `geometric' situations, one expects the relative completion  to be the Betti realisation of something which is motivic.
Indeed, Hain has shown \cite{HaMHS}, \cite{HaGPS} that $\Or(\mathcal{G}_{\Gamma})$ carries a mixed Hodge structure in this case.
As a result, one can define Hodge-motivic periods  and try to carry out \S\ref{sectTransperiods}.

\item  The action of the unipotent radical of the Tannaka group of mixed Hodge structures  acts via the automorphism group of a  space of non-abelian cocyles 
of $\Gamma$ with coefficients in $\mathcal{U}_{\Gamma}$. It is  a certain semi-direct product of $\mathcal{U}_{\Gamma}$ with a group of non-commutative substitutions $\mathrm{Aut}(\mathcal{U}_{\Gamma})^S$.  An inertia condition  corresponds, in the case  $\Gamma = \mathrm{SL}_2(\ZZ)$, to the fact that $C_T$
is fixed, and  there are further constraints coming from the action of Hecke operators.  
  The explicit expression for $C_T$ yields precise information about the action of the Hodge-Galois group.
\end{enumerate}

The following key example illustrates how  multiple modular values for $\mathrm{SL}_2(\ZZ)$ resolve the depth-defect for
multiple zeta values as discussed in \S\ref{sectDepth}.

\begin{example}  \label{examplewt12} On $\Pro^1 \backslash \{0,1,\infty\}$ there are $2^{12}$ integrals of weight $12$, namely 
$$\int_{dch} \omega_{i_1} \ldots \omega_{i_{12}} \quad  \hbox{ where  }  \quad  \omega_{i_j} \in \{ {dt \over t}, {dt \over 1-t}\} \ . $$ 
However the space of multiple zeta values $\mathcal{Z}_{12}$ in weight $12$ has dimension at most $d_{12} = 12$, so there are a huge number of relations. Indeed, modulo products of multiple zeta values of lower weights, there are at most two elements of weight 12:
\begin{equation} \label{zetainweight12}
\zeta(3,3,2,2,2) \qquad \hbox{ and } \qquad  \zeta(3,2,3,2,2)
\end{equation}
by the corollary to theorem \ref{thmAlgInd}.  They are conjectured to be algebraically independent. Note that multiple zeta values of depths $\leq 2$ (or $\leq 3$ for that matter) will not suffice to span $\mathcal{Z}_{12}$ by equation $(\ref{exoticrel})$.

On the other hand, we can consider the coefficients of $C_S$ corresponding to regularised iterated integrals of Eisenstein series 
\begin{equation}\label{eisint}
\int_0^{\tone_{\infty}} \underline{e}_{2a}( X,Y)\underline{e}_{2b}( X,Y) \in \CC[X,Y]
\end{equation}
If we are interested in  periods modulo products, there are just two  relevant cases: $(2a,2b)  \in \{(4,10), (6,8)\}$. 
The description 3 above enables one to extract the relevant  numbers from the coefficients of these polynomials. One finds experimentally that  one 
obtains exactly  the  elements $(\ref{zetainweight12})$ modulo products, and that this is consistent with the coaction on the corresponding Hodge-motivic periods. Thus $\mathcal{Z}_{12}$ is  spanned by  exactly the right number of  multiple modular values (which are linear combinations of the coefficients of  $(\ref{eisint})$).
\end{example}

The example shows that in weight $12$, there are exactly two multiple modular values (modulo products)  which are multiple zeta values, and they conjecturally satisfy no relations. By contrast, multiple zeta values in weight 12 are hugely over-determined, and satisfy a vast number of relations.
Furthermore, the depth-defect described in \S5 can be directly related to the appearance of  special values of $L$-functions of cusp forms  amongst certain coefficients of (6.7).
\\

In conclusion, a rather optimistic hope is  that a  theory of motivic multiple modular values for congruence subgroups of $\mathrm{SL}_2(\ZZ)$  might provide a more natural construction of the periods of mixed Tate motives over cyclotomic fields (and much more) than the motivic fundamental groupoid of  the projective line minus Nth roots of unity, which suffers from the depth defect in the case N=1 (\S5), and from absent periods in non-exceptional cases such as N=5 (\S4).

\section{References}


\end{document}